\documentclass[11pt]{article}
\usepackage{amssymb,amsthm,amsmath,amsfonts}
\usepackage[latin1]{inputenc}
\usepackage{graphicx}
\usepackage{verbatim}
\usepackage{array}
\setcounter{page}{1}
\setlength{\textheight}{23.cm}
\setlength{\textwidth}{17cm}
\setlength{\voffset}{-20mm}
\setlength{\oddsidemargin}{-1mm}
\setlength{\evensidemargin}{-1mm}
\usepackage{color}
\pretolerance=10000
\pagestyle{myheadings}
\thispagestyle{empty}
\newtheorem{Theorem}{Theorem}[section]

\newtheorem{Corollary}[Theorem]{Corollary}

\newtheorem{Lemma}[Theorem]{Lemma}

\newtheorem{Remark}[Theorem]{Remark}

\newtheorem{Proposition}[Theorem]{Proposition}

\newtheorem{Claim}[Theorem]{Claim}

\begin{document}

\title{Ground state solutions for a nonlocal equation in $\mathbb{R}^2$ involving vanishing potentials and exponential critical growth
\thanks{Research partially supported by CAPES Cod 001 and CNPq grant 308735/2016-1.}}
\author{Francisco S. B. Albuquerque \\ Universidade Estadual da Paraíba,
Departamento de Matemática, \\ CEP 58700-070, Campina Grande-PB, Brazil  \\
\textsf{fsiberio@cct.uepb.edu.br}
\\ \\ Marcelo C. Ferreira \\ Universidade Federal de Campina Grande,
Unidade Acadêmica de Matemática, \\ CEP 58000-000, Campina Grande-PB, Brazil  \\
\textsf{marcelo@mat.ufcg.edu.br}
 \\ \\ Uberlandio B. Severo\thanks{Author to whom any correspondence should be addressed.} \\ Universidade Federal da Para\'{\i}ba, Departamento de Matem\'atica, \\
 CEP 58051-900, Jo\~ao Pessoa-PB, Brazil \\
 \textsf{uberlandio@mat.ufpb.br} }

\date{}

\maketitle{}

\numberwithin{equation}{section}

\begin{abstract}
In this paper, we study the following class of nonlinear equations:
$$
-\Delta u+V(x) u = \left[|x|^{-\mu}*(Q(x)F(u))\right]Q(x)f(u),\quad x\in\mathbb{R}^2,
$$
where $V$ and $Q$ are continuous potentials, which can be unbounded or vanishing at infintiy, $f(s)$ is a continuous function, $F(s)$ is the primitive of $f(s)$, $*$ is the convolution operator and $0<\mu<2$.  Assuming that the nonlinearity $f(s)$ has exponential critical growth, we establish the existence of ground state solutions by using variational methods. For this, we prove a new version of the Trudinger-Moser inequality for our setting, which was necessary to obtain our main results.
\end{abstract}

\bigskip

\noindent{\small\emph{2010 Mathematics Subject Classification:}
35J15, 35J25, 35J60.

\noindent\emph{Keywords and phrases:} Choquard equations, Unbounded or decaying potentials, Exponential critical growth, Trudinger-Moser inequality.}

\section{Introduction}\label{}
This paper is concerned with the existence of solutions for nonlinear equations of the form
\begin{equation}\label{P}
-\Delta u+V(|x|) u = \left[|x|^{-\mu}*(Q(|x|)F(u))\right]Q(|x|)f(u),\quad x\in\mathbb{R}^2,
\end{equation}
where the nonlinear term $f$ is allowed to satisfy the exponential growth by mean of the Trudinger-Moser inequality, $F$ denotes its primitive, the radial potentials $V$ and $Q$ can be unbounded, singular at the origin or decaying to zero at infinity and $0<\mu<2$. Here,
 $|x|^{-\mu}*(Q(|x|)F(u))$ denotes the convolution between the Riesz potential $|x|^{-\mu}$ and $Q(|\cdot|)F(u(\cdot))$.

The study of Eq. \eqref{P} is in part motivated by works concerning the equation
\begin{equation}\label{choquard1}
-\Delta u+V(x)u = \left(|x|^{-\mu}*|u|^{p}\right)|u|^{p-2}u,\quad x\in\mathbb{R}^N,
\end{equation}
where $N\geq 3$, $0<\mu<N$, $V:\mathbb{R}^{N}\rightarrow\mathbb{R}$ is a continuous potential and $p\geq 2$. Eq. (\ref{choquard1}) is generally named as \textit{Choquard equation} or \textit{Hartree type equation} and appears in various physical contexts. For example, in the case $N=3$, $V(x)=1$, $p=2$ and $\mu=2$, Eq. \eqref{choquard1} first appeared in the seminal work by S. I. Pekar \cite{Pekar} describing the quantum mechanics of a polaron at rest. As mentioned by Lieb in \cite{Lieb1} and under the same case, Ph. Choquard used Eq. \eqref{choquard1} to model an electron trapped in its own hole, as a certain approximation to Hartree-Fock theory of one-component plasma (see also \cite{Lieb3} for more physical background of \eqref{choquard1}). We point out that Eq. \eqref{choquard1} is also known as the \textit{Schrödinger-Newton equation}, see \cite{BGDB,CSV,MPT,pen,TM}. In \cite{Lieb1}, Lieb proved that the ground state solution of Eq. \eqref{choquard1} is radial and unique up to translations with $\mu= 1$, $p = 2$ and $V$ is a positive constant. Later, in \cite{Lions}, Lions showed the existence of a sequence of radially symmetric solutions. In \cite{CCS,Ma-Zhao,Moroz-Scha}, the authors showed regularity, positivity, radial symmetry and decay estimates at infinity of ground states solutions as well.

In the past two decades, Eq. \eqref{choquard1} has attracted a lot of interest due to the appearance of convolution type nonlinearities. Many authors have used the variational methods to investigate the nonlinear Choquard equation like \eqref{choquard1} and involving general classes of nonlinearities, we refer the readers to \cite{acker,AFY,BJS,CS,GVS,LSY,luo,MVS,SGY} for the study of existence and multiplicity of different types of solutions and \cite{AY0,AY0.5,MVS2} for the study of existence and concentration behavior of solutions. For the convenience of the reader, we suggest \cite{MJS3} for a good review of the Choquard equation.

In dimension two, the case is very special and quite delicate, because as we know for bounded domains $\Omega\subset\mathbb{R}^{2}$, the Sobolev embedding theorem assures that $H_0^{1}(\Omega)\hookrightarrow L^{q}(\Omega)$ for any $q\in[1,+\infty)$, but  $H_0^{1}(\Omega)\not\hookrightarrow L^{\infty}(\Omega)$. Therefore, to fill this gap, the Trudinger-Moser inequality (which is stated in \eqref{classical-TM} below) cames as a replacement of the Sobolev inequality in the limiting case. 

In order to address problems with exponential critical growth, one of the most important tools is the Trudinger-Moser inequality (see \cite{Moser,Trudinger}), which says that if $\Omega$ is a bounded domain in $\mathbb{R}^{2}$, then for all $\alpha>0$ and $u\in H_0^{1}(\Omega)$, it holds $e^{\alpha u^{2}}\in L^{1}(\Omega)$. Moreover
\begin{equation}\label{classical-TM}
\sup_{\|\nabla u\|_{L^2(\Omega)}\leq1}\int_{\Omega}e^{\alpha u^{2}}\,\mathrm{d}x \left\{
\begin{aligned}
<\infty, \ \ & \textrm{when $\alpha\leq 4\pi$}\\
=\infty, \ \ & \textrm{when $\alpha>4\pi$.}
\end{aligned}
\right.
\end{equation}
 In the whole space $\mathbb{R}^2$, the following version of Trudinger-Moser
inequality was proved in  \cite{DOO97} (see also \cite{Cao} for an equivalent version):
\[
e^{\alpha|u|^2} -1 \in L^1(\mathbb{R}^2)\quad\mbox{for all}\quad u\in
H^{1}(\mathbb{R}^2)\quad\mbox{and}\quad\alpha >0.
\]
Moreover, if $\alpha < 4\pi$ and $|u|_{L^2(\mathbb{R}^2)}\leq M$,
there exists a constant $C = C(M,\alpha)$ such that
$$
\sup_{\|\nabla u\|_{L^2(\mathbb{R}^2)}\leq 1}\int_{\mathbb{R}^2}
\left(e^{\alpha|u|^2} - 1\right)\, dx \leq C.
$$
 We point out that, in recent years, many generalizations, extensions and applications of these results have been made. We refer to \cite{djr} for a general discussion on problems involving critical growth of Trudinger-Moser type.

As far as we know, the nonlocal Choquard type equation involving critical exponential growth in $\mathbb{R}^2$ was first studied in \cite{ACTY,AY1}, where the authors considered the existence of ground state solution for the following critical nonlocal equation with periodic potential:
\begin{equation}\label{choquard2}
-\Delta u+W(x) u = \left(|x|^{-\mu}*F(u)\right)f(u),\quad x\in\mathbb{R}^2.
\end{equation}
Under the set of assumptions on the functions $W$ and $f$ below:
\begin{itemize}
    \item[$(W1)$] $W(x)\geq W_0$ for all $x\in \mathbb{R}^{2}$ and for some $W_0>0$;

	 \item[$(W2)$] $W(x)$  is a $1$-periodic continuous function;
  
    \item[$(f^1)$] i) $f(s)=0$ for all $s\leq 0$ and $0\leq f(s)\leq e^{4\pi s^{2}}$ for all $s\geq 0$; and\\
    ii) there exist $s_0>0$, $M_0>0$ and $q\in (0,1]$ such that 
   $$
   0<s^qF(s)\leq M_0f(s),\quad\forall s\geq s_0;
   $$
    
     \item[$(f^2)$] there exit $p>\frac{2-\mu}{2}$ and $C_p>0$ such that $f(s)\sim C_p s^{p}$ as $s\rightarrow 0$;
     
      \item[$(f^3)$] there exists $\theta>1$ such that 
    $$
    \theta\int_0^{s}f(t)\,\mathrm{d}t=\theta F(s)\leq f(s)s,\quad\forall s\geq 0;
    $$
      
       \item[$(f^4)$] there exists $\beta_0>0$ large enough such that
	$$
	\lim_{s\rightarrow +\infty}\frac{sf(s)F(s)}{e^{8\pi s^2}}\geq \beta_0,
	$$ 
\end{itemize}
they obtained the existence of ground state solution in $H^{1}(\mathbb{R}^{2})$ for Eq. \eqref{choquard2}. The goal of the present paper is to continue the study of nonlocal equations in $\mathbb{R}^{2}$ as in \eqref{choquard2}, by considering potentials $W(x)$ which can be singular at the origin and vanishing at infinity. Moreover, we deal with nonlinearities of the form $[|x|^{-\mu}*(Q(x)F(u))]Q(x)f(u)$, where $Q(x)$ is a weight function and $f$ behaves at infinity like $e^{\alpha s^{2}}$ for some $\alpha>0$. 

Here, due to the presence of the weight function $Q$ in the nonlocal term, which can be singular at the origin, we need to derive a new version of the Trudinger-Moser inequality for our context (see Proposition \ref{TM} in Section 2). 

In this work, we impose the following hypotheses on the potential $V$ and the weight $Q$:
\begin{itemize}
    \item[$(V0)$] $V\in C(0,\infty)$, $V(r)>0$ and there exist $a_0>-2$ and $a>-2$ such that
$$
\limsup_{r\rightarrow 0^{+}}\dfrac{V(r)}{r^{a_0}} < \infty \quad \mathrm{and}\quad \liminf_{r\rightarrow+\infty}\dfrac{V(r)}{r^a}>0;
$$
\item[$(Q0)$] $Q\in C(0,\infty)$, $Q(r)>0$ and there exist $b_0>-\frac{4-\mu}{2}$ and $b<\frac{a(4-\mu)}{4}$ such that
$$
\limsup_{r\rightarrow0^{+}}\frac{Q(r)}{r^{b_0}}<\infty\quad\text{and}\quad
\limsup_{r\rightarrow+\infty}\frac{Q(r)}{r^b}<\infty.
$$
\end{itemize}
Hereafter, we say that $(V,Q)\in \mathcal{K}$ if $(V0)$ and $(Q0)$ hold. According to the features of the functions $V$ and $Q$, our approach becomes more delicate when compared with the works \cite{ACTY,AY1}. In this direction, we complement and improve the results in \cite{ACTY,AY1}, in the sense that we are leading with potentials can be singular at the origin and vanishing at infinity.

In this work, we are also interested in the case where the nonlinear term $f(s)$ has maximal growth on $s$ which allows us to treat problem~\eqref{P} variationally. Explicitly, in view of the classical Trudinger-Moser inequality \eqref{classical-TM} and following \cite{Ad-Yadava,djairo-olimpio-ruf}, we say that a function $f(s)$ has $\alpha_0$-\emph{exponential critical growth} at $+\infty$ if
\begin{equation}\label{def.cresc.critico}
\lim_{s\rightarrow+\infty}\dfrac{f(s)}{e^{\alpha s^2}}=0,\ \ \forall\alpha>\alpha_0,\ \ \text{and}\ \lim_{s\rightarrow+\infty}\dfrac{f(s)}{e^{\alpha s^2}}=+\infty,\ \ \forall\alpha<\alpha_0.
\end{equation}
Similarly we define exponential critical growth at $-\infty$. Throughout this paper, the following hypotheses on $f(s)$ will be imposed:
\begin{itemize}
   \item[$(f_1)$] $f:\mathbb{R}_+\rightarrow\mathbb{R}$ is continuous and $\lim_{s\rightarrow 0^{+}}f(s)/s^{\frac{2-\mu}{2}}=0$;
    \item[$(f_2)$] there exists $\theta>1$ such that 
    $$
    \theta\int_0^{s}f(t)\,\mathrm{d}t=\theta F(s)\leq f(s)s,\quad\forall s\geq 0;
    $$
    \item[$(f_3)$] there exist $q>1$ and $\xi>0$ such that 
   $$
   F(s)\geq\xi s^q,\quad\forall s\in [0,1].
   $$
\end{itemize}

In order to state our main results, we need to introduce some
notations. $L^{s}(\mathbb{R}^{2})$, for $1\leq s<\infty$, denotes the Lebesgue space with the norms
$$
|u|_{s}=\left(\int_{\mathbb{R}^2}|u|^s\,\mathrm{d}x\right)^{1/s}
$$
and we shall use the notation $\|u\|_{L^{s}(\Omega)}$ for the norm in the Lebesgue space $L^{s}(\Omega)$, for $1\leq s<\infty$ and any $\Omega\subset\mathbb{R}^{2}$. If $1\leq q<\infty$ we define the weighted Lebesgue spaces
$$
L^q(\mathbb{R}^2;Q):=\left\{u:\mathbb{R}^2\rightarrow\mathbb{R} : u \; \text{is measurable and} \int_{\mathbb{R}^2}Q(|x|)|u|^q\,\mathrm{
d}x<\infty\right\} 
$$ 
and
$$ 
L^2(\mathbb{R}^2;V):=\left\{u:\mathbb{R}^2\rightarrow\mathbb{R} : u \; \text{is measurable and} \int_{\mathbb{R}^2}V(|x|)u^2\,\mathrm{d}x<\infty\right\}, 
$$
endowed, respectively, with the norms
$$
|u|_{L^{q}(\mathbb{R}^{2};Q)}=\left(\int_{\mathbb{R}^2}Q(|x|)|u|^q\,\mathrm{d}x\right)^{\frac{1}{q}}\quad\textrm{and}\quad|u|_{L^{2}(\mathbb{R}^{2};V)}=\left(\int_{\mathbb{R}^2}V(|x|)u^2\,\mathrm{d}x\right)^{\frac{1}{2}}.
$$
We also define the functional space
$$
Y:=\left\{u\in L_{\mathrm{loc}}^{2}(\mathbb{R}^{2}):|\nabla u|\in  L^{2}(\mathbb{R}^{2})\ \ \text{and}\ \ \int_{\mathbb{R}^2}V(|x|)u^2\,\mathrm{d}x<\infty\right\}
$$
endowed with the norm $\|u\|:=\sqrt{\langle u,u\rangle}$ induced by the scalar product
\begin{equation}\label{inner-def}
\langle u,v\rangle:=\int_{\mathbb{R}^2}\left(\nabla u\cdot\nabla v+V(|x|)uv\right)\mathrm{d}x.
\end{equation}
The proof that $(Y, \| \cdot\|)$ is a Hilbert space is not direct. Thus, in the next section, we present its proof. Furthermore, the subspace
$$
Y_{\mathrm{rad}}:
=\{u\in Y:u\ \text{is radial}\}
$$
is closed in $Y$ and thus it is a Hilbert space itself.

Let $C^{\infty}_0(\mathbb{R}^2)$ be the set of smooth functions with compact support. We say that $u:\mathbb{R}^{2}\rightarrow\mathbb{R}$ is a weak solution for \eqref{P} if $u\in Y$ and it holds the equality
\begin{equation}\label{weak-sol-def}
\int_{\mathbb{R}^2}(\nabla u\cdot\nabla\phi+V(|x|)u\phi)\,\mathrm{d}x-\int_{\mathbb{R}^2}\left[|x|^{-\mu}*(Q(|x|)F(u))\right]Q(|x|)f(u)\phi\,\mathrm{d}x=0,
\end{equation}
for all $\phi\in C^{\infty}_0(\mathbb{R}^2)$.
Now, our main results read as follow.

\begin{Theorem}\label{main-thm}
Assume that $0<\mu<2,\ (V,Q)\in \mathcal{K}$ and $f(s)$ satisfies $\eqref{def.cresc.critico},\ (f_1)-(f_3)$ with $\xi>0$, given in $(f_3)$, verifying
$$
\xi\geq\max\left\{\xi_1, \left[\frac{\frac{\|Q\|_{L^1(B_{1/2})}^2}{2}(q-1)\left(\frac{\xi_1^2}{q}\right)^{q/(q-1)}}{\frac{4-\mu}{\alpha_0}\left(1+\frac{2b_0}{4-\mu}\right)\frac{\pi(\theta-1)}{2\theta}}\right]^{(q-1)/2}\right\},
$$
where
$$
\xi_1:=\frac{[\pi+\|V\|_{L^1(B_1)}]^{\frac{1}{2}}}{\|Q\|_{L^1(B_{1/2})}}.
$$
Then, Eq. \eqref{P} has a nontrivial weak solution in $Y_{\mathrm{rad}}$.
\end{Theorem}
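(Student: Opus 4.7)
The plan is to obtain the nontrivial solution as a critical point of the energy functional
\begin{equation*}
I(u)=\frac{1}{2}\|u\|^{2}-\frac{1}{2}\int_{\mathbb{R}^{2}}\bigl[|x|^{-\mu}\ast(Q(|x|)F(u))\bigr]Q(|x|)F(u)\,\mathrm{d}x
\end{equation*}
restricted to the closed subspace $Y_{\mathrm{rad}}$, via the Mountain Pass Theorem with a Cerami sequence. Before anything else I would verify that $I\in C^{1}(Y_{\mathrm{rad}},\mathbb{R})$ and that a nonzero critical point indeed fulfills \eqref{weak-sol-def}. This relies on combining the Hardy--Littlewood--Sobolev inequality with the new Trudinger--Moser inequality of Proposition~\ref{TM}, applied on $Y_{\mathrm{rad}}$, to control the doubly critical nonlocal term; the growth conditions $(f_{1})$ (subcritical at $0$ with exponent $(2-\mu)/2$) and \eqref{def.cresc.critico} (exponential critical at $\infty$) are exactly tailored to make this work together with a weighted compact embedding $Y_{\mathrm{rad}}\hookrightarrow L^{q}(\mathbb{R}^{2};Q)$ induced by $(V0)$--$(Q0)$.

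Next I would check the mountain-pass geometry of $I$ on $Y_{\mathrm{rad}}$. Near the origin one splits $F(u)$ into a part controlled by $|u|^{(4-\mu)/2+1}$ (using $(f_{1})$) and a part controlled by $(e^{\alpha u^{2}}-1)|u|^{r}$ for $r$ large; after Hardy--Littlewood--Sobolev and Cauchy--Schwarz the nonlocal term is dominated by $\|u\|^{4-\mu}+\|u\|^{2(r+1)}\cdot o(1)$ on small balls of $Y_{\mathrm{rad}}$, so $\inf_{\|u\|=\rho}I\ge\sigma>0$ for some small $\rho,\sigma>0$. For the second geometric condition, $(f_{2})$ gives $F(s)\ge Cs^{2\theta}$ for $s$ large with $\theta>1$, making the nonlocal term superquadratic and forcing $I(tu_{0})\to-\infty$ along any fixed nonzero $u_{0}\in Y_{\mathrm{rad}}$ with $F(u_{0})\not\equiv0$. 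Defining $c=\inf_{\gamma\in\Gamma}\max_{t\in[0,1]}I(\gamma(t))$ in the usual way, a Cerami sequence $(u_{n})$ at level $c$ is produced, shown to be bounded via $(f_{2})$, and a weak limit $u\in Y_{\mathrm{rad}}$ is extracted.

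The decisive step, and the real obstacle, is the a priori upper estimate
\begin{equation*}
c<c^{\ast}:=\frac{4-\mu}{\alpha_{0}}\Bigl(1+\frac{2b_{0}}{4-\mu}\Bigr)\frac{\pi(\theta-1)}{2\theta},
\end{equation*}
below which the exponential term still yields strong convergence of $[|x|^{-\mu}\ast(QF(u_{n}))]Qf(u_{n})$ via the Trudinger--Moser bound of Proposition~\ref{TM} and the HLS inequality. I would test with the cut-off radial function $\widetilde{\varphi}(x)=\chi_{B_{1}}(|x|)$ (or a scaled Moser--type function supported in $B_{1/2}$), compute $\|\widetilde{\varphi}\|^{2}\le\pi+\|V\|_{L^{1}(B_{1})}$, and use $(f_{3})$, which forces $F(u)\ge\xi u^{q}$ on $[0,1]$, to estimate $\max_{t\ge0}I(t\widetilde{\varphi})$ from above. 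Minimizing this one-variable maximum in $t$ gives an upper bound of the form
\begin{equation*}
c\le\frac{(q-1)}{2q}\bigl(\tfrac{q}{\xi^{2}\|Q\|_{L^{1}(B_{1/2})}^{2}}\bigr)^{1/(q-1)}\bigl(\pi+\|V\|_{L^{1}(B_{1})}\bigr)^{q/(q-1)},
\end{equation*}
and the hypothesis on $\xi$ in the statement is exactly the algebraic condition that makes this bound strictly smaller than $c^{\ast}$; the auxiliary lower bound $\xi\ge\xi_{1}$ guarantees that this candidate path actually crosses the mountain (so $\max_{t}I(t\widetilde{\varphi})>\sigma$ and the path is admissible).

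Finally, armed with $c<c^{\ast}$, I would show that $u\not\equiv0$: if $u\equiv 0$, the level estimate together with Proposition~\ref{TM} and HLS would force $\int[|x|^{-\mu}\ast(QF(u_{n}))]QF(u_{n})\to 0$, yielding $\|u_{n}\|\to 0$ and hence $c=0$, a contradiction. Passage to the limit in the nonlocal term in \eqref{weak-sol-def} is performed using the weighted compact embeddings on $Y_{\mathrm{rad}}$ from Section~2 for the subcritical portion, and the uniform exponential integrability coming from the Cerami bound on $\|u_{n}\|$ together with $c<c^{\ast}$ for the critical portion; thus $u$ is a nontrivial weak solution in $Y_{\mathrm{rad}}$.
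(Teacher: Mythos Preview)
Your overall strategy matches the paper's: mountain-pass geometry, boundedness of the Palais--Smale sequence via $(f_2)$, the threshold $c_0=\frac{4-\mu}{\alpha_0}\bigl(1+\frac{2b_0}{4-\mu}\bigr)\frac{\pi(\theta-1)}{2\theta}$, and the level estimate $c^\star<c_0$ through a cutoff test function. Two points, however, need correction.

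First, the test function $\widetilde{\varphi}=\chi_{B_1}$ is not in $Y_{\mathrm{rad}}$ (it is not even in $H^1_{\mathrm{loc}}$), so the estimate $\|\widetilde{\varphi}\|^2\le\pi+\|V\|_{L^1(B_1)}$ makes no sense for it. The paper takes instead a smooth radial cutoff $\varphi_0$ with $\varphi_0\equiv 1$ on $B_{1/2}$, $\mathrm{supp}(\varphi_0)\subset B_1$, $0\le\varphi_0\le 1$, and $|\nabla\varphi_0|\le 1$; then $\|\varphi_0\|^2\le\pi+\|V\|_{L^1(B_1)}$ genuinely holds. A Moser-type function would not produce the explicit constants in the hypothesis on $\xi$.

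Second, the role of $\xi\ge\xi_1$ is not to make the path ``cross the mountain'' (any admissible path does that automatically by the first geometric condition). Rather, with $\varphi_0$ as above, $(f_3)$ gives $J(\varphi_0)\le\tfrac12[\pi+\|V\|_{L^1(B_1)}]-\tfrac{\xi^2}{2}\|Q\|_{L^1(B_{1/2})}^2$, and $\xi\ge\xi_1$ forces $J(\varphi_0)<0$, so the path $t\mapsto t\varphi_0$, $t\in[0,1]$, is admissible \emph{without} going out to large $t$. This is what allows you to bound $c^\star$ by $\max_{t\in[0,1]}J(t\varphi_0)$ rather than $\max_{t\ge 0}J(t\varphi_0)$.

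Finally, your endgame is more elaborate than necessary. The paper shows directly that $J$ satisfies the Palais--Smale condition at every level $c<c_0$: using $c<c_0$, the estimate $\|u_n\|^2\le\frac{2\theta}{\theta-1}c+o_n(1)$ and Proposition~\ref{TM} give uniform control of the exponential term, and convexity of $u\mapsto\tfrac12\|u\|^2$ together with the compact embeddings of Corollary~\ref{tec2} yield $\|u_n\|\to\|u\|$, hence strong convergence. Then $J'(u_\star)=0$ and $J(u_\star)=c^\star>0$ immediately give nontriviality, with no separate contradiction argument or passage to the limit in the weak formulation needed.
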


\begin{Theorem}\label{ground-state}
Under the conditions of Theorem \ref{main-thm} and supposing that $f(s)/s$ is increasing for $s>0$, then the solution obtained in Theorem \ref{main-thm} is a ground state.
\end{Theorem}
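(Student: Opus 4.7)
The plan is to identify the solution $u_0$ produced by Theorem~\ref{main-thm} as a minimiser of the energy functional
\[
I(u) = \frac{1}{2}\|u\|^{2} - \frac{1}{2}\int_{\mathbb{R}^{2}}\bigl[|x|^{-\mu}*(Q(|x|)F(u))\bigr]Q(|x|)F(u)\,\mathrm{d}x
\]
on the Nehari manifold
\[
\mathcal{N} := \bigl\{ u \in Y_{\mathrm{rad}}\setminus\{0\} : I'(u)u = 0 \bigr\},
\]
and then to observe that every nontrivial weak solution of \eqref{P} in $Y_{\mathrm{rad}}$ lies in $\mathcal{N}$. Write $c_{\ast} := \inf_{\mathcal{N}} I$ and let $c_{\mathrm{MP}}$ denote the mountain-pass level attained by $u_{0}$ in Theorem~\ref{main-thm}.

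The first step is a fibering lemma: for every $u \in Y_{\mathrm{rad}}\setminus\{0\}$ the map $\phi_{u}(t) := I(tu)$ admits a unique positive maximum point $t_{u}$, with $t_{u} u \in \mathcal{N}$. The algebraic input is that $s\mapsto f(s)/s$ strictly increasing on $(0,\infty)$ forces $s \mapsto F(s)/s^{2}$ to be strictly increasing as well, since $F(s) = \int_{0}^{s}(f(t)/t)\, t\,\mathrm{d}t < (f(s)/s)\cdot s^{2}/2$. Computing
\[
\phi_{u}'(t) = t\|u\|^{2} - \int_{\mathbb{R}^{2}}\!\!\int_{\mathbb{R}^{2}}\frac{Q(|x|)F(tu(x))\,Q(|y|)f(tu(y))u(y)}{|x-y|^{\mu}}\,\mathrm{d}x\,\mathrm{d}y
\]
and dividing by $t^{3}$, the term $\|u\|^{2}/t^{2}$ strictly decreases in $t$ while the integrand rewrites as
\[
\frac{1}{|x-y|^{\mu}}\,Q(|x|)\frac{F(tu(x))}{(tu(x))^{2}}u(x)^{2}\,Q(|y|)\frac{f(tu(y))}{tu(y)}u(y)^{2},
\]
which strictly increases in $t$ by the two monotonicities. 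Hence $\phi_{u}'(t)/t^{3}$ is strictly decreasing on $(0,\infty)$. Combined with $\phi_{u}'(t)>0$ for small $t$ (from $(f_1)$) and $\phi_{u}(t)\to-\infty$ as $t\to+\infty$ (from $(f_2)$, which yields $F(s)\geq c\, s^{\theta}$ for large $s$), this produces a unique positive zero $t_{u}$ of $\phi_{u}'$ that is the global maximum of $\phi_{u}$.

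Given the fibering lemma, the identification $c_{\mathrm{MP}} = c_{\ast}$ is standard. For any $u \in \mathcal{N}$ choose $T>1$ with $I(Tu)<0$; then $\tau\mapsto\tau T u$ lies in the admissible class $\Gamma$ used to define $c_{\mathrm{MP}}$, and by the lemma $\max_{\tau\in[0,1]} I(\tau T u) = I(u)$ (since $t_{u}=1$). Taking the infimum over $u\in\mathcal{N}$ yields $c_{\mathrm{MP}}\le c_{\ast}$. The reverse $c_{\ast}\le I(u_{0}) = c_{\mathrm{MP}}$ holds because $u_{0}$ is a nontrivial critical point of $I$ and hence belongs to $\mathcal{N}$. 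Finally any nontrivial weak solution $v$ of \eqref{P} satisfies $I'(v)v = 0$ and $v\neq 0$, so $v\in\mathcal{N}$ and $I(v)\ge c_{\ast} = I(u_{0})$, which is precisely the ground-state property stated in the theorem.

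The main obstacle is making the fibering lemma rigorous in this nonlocal, weighted, exponential-critical setting: integrability of the double integral along the entire ray $\{tu : t>0\}$ must be controlled via the weighted Trudinger-Moser estimate of Proposition~\ref{TM} together with the Hardy-Littlewood-Sobolev inequality, and the asymptotic behaviour $\phi_{u}(t)\to-\infty$ must be derived from $(f_2)$ for the Choquard-type nonlinearity, in which the convolution term eventually grows at least like $t^{2\theta}$.
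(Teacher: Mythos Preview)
Your argument is correct and follows the same overall strategy as the paper: analyse the fibering map $t\mapsto J(tu)$, show that for a nontrivial critical point the maximum along the ray occurs at $t=1$, and then compare with the mountain-pass level via an explicit path in $\Gamma$.

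There are two differences worth noting. First, you work with the full Nehari manifold $\{u\neq 0: I'(u)u=0\}$ and establish the equality $c_{\mathrm{MP}}=\inf_{\mathcal{N}}I$, whereas the paper restricts attention to the set of nontrivial critical points and only proves the one-sided inequality $c^{\star}\le J(u)$ for each such $u$; your version is slightly stronger but not needed for the stated conclusion. Second, and more interestingly, your monotonicity argument---dividing $\phi_u'(t)$ by $t^{3}$ and invoking the separate monotonicities of $F(s)/s^{2}$ and $f(s)/s$---is genuinely better adapted to the nonlocal structure. In the Choquard double integral the factors $F$ and $f$ are evaluated at \emph{different} points $u(x)$ and $u(y)$, so one needs a product of two single-variable monotone quantities, exactly as you wrote. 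The paper instead records a formula for $h_u'(t)$ in which the combined quantity $F(s)f(s)/s$ appears inside the convolution, tacitly treating $F$ and $f$ as evaluated at the same argument; your decomposition avoids this issue and is the cleaner route for the nonlocal term.
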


For our second existence result, we replace condition $(f_3)$ by the following conditions:
\begin{itemize}
	\item[$(f_4)$] there exist $s_0>0$, $M_0>0$ and $\vartheta\in (0,1]$ such that 
	$$
	0<s^{\vartheta}F(s)\leq M_0f(s),\quad\forall s\geq s_0;
	$$
	\item[$(f_5)$] $\displaystyle\liminf_{s\rightarrow +\infty}\frac{F(s)}{e^{\alpha_0s^2}}=:\beta_0>0.
	$  
\end{itemize}

\begin{Theorem}\label{main-thm2}
	Assume that $0<\mu<2,\ (V,Q)\in \mathcal{K}$ and $f(s)$ satisfies $\eqref{def.cresc.critico}$, $(f_1),\ (f_2),\ (f_4)$ and $(f_5)$. If we also assume that $ \displaystyle\liminf_{r \to 0^+} Q(r)/r^{b_0} > 0$, then Eq. \eqref{P} has a nontrivial weak solution in $Y_{\mathrm{rad}}$.
\end{Theorem}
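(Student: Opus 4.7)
The plan is to obtain the nontrivial solution as a critical point of the associated Euler-Lagrange functional
\[
I(u)=\frac{1}{2}\|u\|^{2}-\frac{1}{2}\int_{\mathbb{R}^{2}}\bigl[|x|^{-\mu}\ast(Q(|x|)F(u))\bigr]Q(|x|)F(u)\,\mathrm{d}x,
\]
working on the Hilbert space $Y_{\mathrm{rad}}$ and applying the mountain pass theorem. First I would verify that $I$ is well defined and of class $C^{1}$ on $Y_{\mathrm{rad}}$: this uses the Hardy--Littlewood--Sobolev inequality to control the convolution, combined with the weighted Trudinger--Moser inequality (Proposition \ref{TM}) together with the subcritical behavior at the origin furnished by $(f_{1})$. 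The same ingredients give the first geometric condition: for $\|u\|=\rho$ small enough, $I(u)\geq\sigma>0$. The existence of $e\in Y_{\mathrm{rad}}$ with $I(e)<0$ is standard from $(f_{2})$, which provides the superquadratic bound $F(s)\geq C s^{\theta+1}$ for $s$ large. Let $c$ denote the corresponding minimax level.

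Next I would extract a Palais--Smale sequence $(u_{n})$ at level $c$. Using the Ambrosetti--Rabinowitz-type condition $(f_{2})$ in the usual way on $I(u_{n})-\frac{1}{2(\theta+1)}I'(u_{n})u_{n}$, one obtains boundedness of $(u_{n})$ in $Y_{\mathrm{rad}}$. Up to a subsequence, $u_{n}\rightharpoonup u$ in $Y_{\mathrm{rad}}$, $u_{n}\to u$ a.e., and by the compactness of the weighted embeddings proved earlier in Section~2, $u_{n}\to u$ strongly in the relevant weighted Lebesgue spaces $L^{q}(\mathbb{R}^{2};Q)$. A routine limit argument then shows that $u$ is a weak solution of \eqref{P}; the whole difficulty is to prove that $u\neq 0$.

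The key step — and the main obstacle — is to estimate the minimax level strictly below the threshold for compactness given by Proposition \ref{TM}, roughly
\[
c<\frac{2-\mu}{2\alpha_{0}}\cdot\frac{2\pi}{4-\mu}\Bigl(1+\frac{2b_{0}}{4-\mu}\Bigr).
\]
For this I would use a sequence of Moser-type concentrating functions, adapted to the weighted setting, for instance of the form
\[
\widetilde{\omega}_{n}(x)=\frac{1}{\sqrt{2\pi}}\begin{cases}(\log n)^{1/2},& |x|\leq 1/n,\\ \log(1/|x|)/(\log n)^{1/2},& 1/n\leq|x|\leq 1,\\ 0,& |x|>1,\end{cases}
\]
normalized in $Y_{\mathrm{rad}}$, and estimate $\max_{t\geq 0}I(t\widetilde{\omega}_{n})$. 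The lower bound on the nonlocal term is obtained from $(f_{5})$, which yields $F(s)\geq(\beta_{0}-\varepsilon)e^{\alpha_{0}s^{2}}$ for $s$ large, combined with the hypothesis $\liminf_{r\to 0^{+}}Q(r)/r^{b_{0}}>0$ that ensures $Q$ does not vanish in the region where $\widetilde{\omega}_{n}$ concentrates. A careful asymptotic computation, integrating $|x|^{-\mu}$ against $Q(|x|)e^{\alpha_{0}t_{n}^{2}\widetilde{\omega}_{n}^{2}}$ on balls of radius $1/n$, shows that for $n$ large the required strict inequality holds (this is the place where the pointwise lower bound of $Q$ near the origin, not merely the upper bound from $(Q0)$, is indispensable).

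With this level estimate in hand, I would close the argument by proving that $u\neq 0$. If $u\equiv 0$, then $\int_{\mathbb{R}^{2}}[|x|^{-\mu}\ast(Q F(u_{n}))]Q F(u_{n})\,\mathrm{d}x\to 0$, forcing $\|u_{n}\|^{2}\to 2c$; condition $(f_{4})$ together with a Br\'ezis--Lieb--type splitting and the weighted Trudinger--Moser inequality allow one to pass to the limit in the nonlocal term provided $\alpha_{0}\limsup\|u_{n}\|^{2}$ stays below the threshold, which is exactly guaranteed by the level estimate above. This contradicts $c>0$, so $u$ is a nontrivial critical point, hence a nontrivial weak solution in $Y_{\mathrm{rad}}$.
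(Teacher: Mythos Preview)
Your overall strategy matches the paper's, but there is a genuine gap at the step you label ``routine''. After extracting the bounded Palais--Smale sequence and its weak limit $u$, you assert that passing to the limit in
\[
\int_{\mathbb{R}^{2}}\bigl[|x|^{-\mu}\ast(Q F(u_{n}))\bigr]Q f(u_{n})\phi\,\mathrm{d}x
\]
is routine once you have strong convergence in the weighted $L^{q}$ spaces. It is not. The nonlinearity $f(u_{n})$ has critical exponential growth, so no polynomial $L^{q}$ bound controls it; one would need $\alpha_{0}\|u_{n}\|^{2}$ to fall below the Trudinger--Moser threshold. But from $(f_{2})$ one only obtains $\|u_{n}\|^{2}\leq\frac{2\theta}{\theta-1}c+o_{n}(1)$, and the Moser-function computation (with $(f_{5})$ and the lower bound on $Q$ near the origin) gives only
\[
c<\frac{4-\mu}{\alpha_{0}}\Bigl(1+\frac{2b_{0}}{4-\mu}\Bigr)\frac{\pi}{2},
\]
not the smaller threshold $c_{0}=\frac{4-\mu}{\alpha_{0}}\bigl(1+\frac{2b_{0}}{4-\mu}\bigr)\frac{\pi(\theta-1)}{2\theta}$ that would be needed for the Palais--Smale condition of Proposition~\ref{ps}. (Your stated threshold is also numerically off.) Hence the argument of Proposition~\ref{ps} does not apply, and you cannot conclude that $u$ is a critical point by a compactness argument.

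The paper circumvents this by \emph{not} proving a Palais--Smale condition for Theorem~\ref{main-thm2}. Instead it shows directly that the weak limit $v_{\star}$ is a weak solution: first a truncation argument based on $(f_{4})$ proves convergence of the term $\int[|x|^{-\mu}\ast(Q F(v_{n}))]Q F(v_{n})$ (Claim~6.4); then, to pass to the limit in the term involving $f(v_{n})$, one tests $I'(v_{n})$ against $w_{n}=\varphi/(1+v_{n})$ to obtain a uniform local $L^{1}$ bound on $[|x|^{-\mu}\ast(Q F(v_{n}))]Q f(v_{n})$, and uses weak$^{\ast}$ compactness of measures together with Radon--Nikod\'ym (Claim~6.5). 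The level estimate is used only at the very end, in the nontriviality step, where assuming $v_{\star}=0$ yields $\|v_{n}\|^{2}\to 2c$ (not $\frac{2\theta}{\theta-1}c$), and \emph{that} quantity does sit below the Trudinger--Moser threshold. Your sketch misses this structural point: the place where $(f_{4})$ and the delicate limit argument enter is in showing $u$ is a solution, not merely in the nontriviality step.
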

We emphasize that conditions $(f_3)$ and $(f_5)$ are usually used to estimate the minimax level of the energy functional associated to the problem and, therefore, we study two situations. The second one, by using $(f_5)$, is more delicate and we do not require a constraint on the positive constant $\beta_0$.

To conclude this introduction, we would like to say that compared with the local equation
$$
-\Delta u+V(x) u = Q(x)f(u),\quad x\in\mathbb{R}^2,
$$ 
our main difficulty is that there exists the nonlocal term $|x|^{-\mu}*(Q(x)F(u))$ in \eqref{P}, which leads to more complex computations, causes some mathematical difficulties and makes the problem rough and particularly interesting.


The outline of the paper is as follows: in Section 2, we show some useful preliminary results which will be used later on. In Section 3, we establish our variational setting. In Section 4, we check the geometric conditions and we prove some properties on the Palais-Smale sequences of the associated functional. Moreover, we get a more precise information about the minimax level obtained by the mountain-pass theorem. In Section 5, we prove Theorems \ref{main-thm} and \ref{ground-state}. The final section offers the proof of Theorem \ref{main-thm2}.


\section{Some useful preliminaries}

In this section, we introduce some useful results that will be used in our proofs. We start by proving that $Y$ is a Hilbert space. Hereafter, positive constants (possibly different) will be denoted by $C,C_0,C_1,C_2,\cdots$ and to indicate that a constant depends on other quantities we list them in parentheses: $C(\ldots)$. Furthermore, we denote the open ball centered at the origin with radius $R>0$ by $B_R$.
\begin{Proposition}
	The space $(Y,\|\cdot\|)$ is a Hilbert space.
\end{Proposition}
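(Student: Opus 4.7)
The plan is to verify first that $\langle\cdot,\cdot\rangle$ defines an inner product on $Y$, and then to prove completeness, which is the main task. Bilinearity and symmetry are immediate from the definition. For positive-definiteness, if $\|u\|=0$ then $\nabla u=0$ a.e.\ on $\mathbb{R}^2$ and $\int V(|x|)u^2\,\mathrm{d}x=0$; since $u\in L^2_{\mathrm{loc}}(\mathbb{R}^2)$ and $\mathbb{R}^2$ is connected, a vanishing distributional gradient forces $u$ to equal a constant a.e., and the positivity of $V$ on $(0,\infty)$ forces that constant to be zero.

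For completeness, let $(u_n)$ be Cauchy in $Y$. Then $(\nabla u_n)$ is Cauchy in $L^2(\mathbb{R}^2;\mathbb{R}^2)$ and converges to some $w$. Moreover, since $V$ is continuous and positive on $(0,\infty)$ with $V(r)\leq C r^{a_0}$ near $0$ and $a_0>-2$, $V\in L^1_{\mathrm{loc}}(\mathbb{R}^2)$, so $V(|x|)\,\mathrm{d}x$ is a $\sigma$-finite positive Radon measure and $L^2(\mathbb{R}^2;V\,\mathrm{d}x)$ is a Hilbert space; hence $u_n\to v$ there and, along a subsequence, pointwise a.e. To identify $v$, I would first work on each annulus $A_{r_1,r_2}:=\{r_1\leq|x|\leq r_2\}$ with $0<r_1<r_2<\infty$: continuity and positivity of $V$ give $V\geq c(r_1,r_2)>0$ on $A_{r_1,r_2}$, so $(u_n)$ is Cauchy in $L^2(A_{r_1,r_2})$ and therefore in $H^1(A_{r_1,r_2})$, with limit equal (a.e.) to $v$. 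Exhausting $\mathbb{R}^2\setminus\{0\}$ by such annuli, $v\in H^1_{\mathrm{loc}}(\mathbb{R}^2\setminus\{0\})$ with $\nabla v=w$ there.

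The main obstacle is then to upgrade this to $v\in L^2_{\mathrm{loc}}(\mathbb{R}^2)$ and to extend $\nabla v=w$ distributionally across the origin, since the weighted norm does not control $u_n$ in regions where $V$ may vanish. For this I would invoke the Poincar\'e-type inequality
$$\bigl\|\,\varphi-\langle\varphi\rangle_{A_{R/2,R}}\,\bigr\|_{L^2(B_R)}\leq C(R)\,\|\nabla\varphi\|_{L^2(B_R)},\qquad \varphi\in H^1(B_R),$$
where $\langle\varphi\rangle_{E}:=|E|^{-1}\int_E\varphi$. Applying this to $\varphi=u_n-u_m$ and using the triangle inequality yields
$$\|u_n-u_m\|_{L^2(B_R)}\leq C(R)\,\|\nabla u_n-\nabla u_m\|_{L^2(B_R)}+|B_R|^{1/2}\bigl|\langle u_n-u_m\rangle_{A_{R/2,R}}\bigr|.$$
The first term tends to zero because $(\nabla u_n)$ is Cauchy in $L^2$, and the second does so because the already-established convergence of $u_n$ in $L^2(A_{R/2,R})$ makes the real numbers $\langle u_n\rangle_{A_{R/2,R}}$ Cauchy. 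Hence $(u_n)$ is Cauchy in $L^2(B_R)$ for every $R>0$, so $u_n\to v$ in $L^2_{\mathrm{loc}}(\mathbb{R}^2)$.

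Finally, convergence in $L^2_{\mathrm{loc}}(\mathbb{R}^2)$ together with convergence of the gradients in $L^2(\mathbb{R}^2)$ implies $\nabla v=w$ in $\mathcal{D}'(\mathbb{R}^2)$, so $|\nabla v|\in L^2(\mathbb{R}^2)$; combined with $v\in L^2(\mathbb{R}^2;V\,\mathrm{d}x)$, this gives $v\in Y$ and $\|u_n-v\|\to 0$, establishing completeness.
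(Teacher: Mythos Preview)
Your proof is correct and follows essentially the same strategy as the paper's: both establish that $(u_n)$ is Cauchy in $L^2(B_R)$ by combining the $L^2$-convergence of the gradients with control of $u_n$ on an annulus (where $V$ is bounded below by continuity and positivity), and then identify the limit as an element of $Y$. The only minor technical difference is in the Poincar\'e step---the paper multiplies by a cutoff $\varphi$ supported in $B_{R+1}$ with $\varphi\equiv 1$ on $B_R$ and applies the $H^1_0$-Poincar\'e inequality (bounding $|\nabla\varphi|^2|u_n-u_m|^2$ on the annulus via $V$), whereas you apply a Poincar\'e--Wirtinger inequality subtracting the annular mean; both achieve the same end.
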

\begin{proof}
	Let $(u_n)\subset Y$ be a Cauchy sequence. Hence, $(\frac{\partial u_n}{\partial x_i}),\, i=1,2,$ and $(V^{1/2}u_n)$ are both Cauchy sequences in $L^{2}(\mathbb{R}^{2})$. Thus,
	\begin{equation}\label{banach1}
	\frac{\partial u_n}{\partial x_i}\rightarrow u^i\ (i=1,2)\ \ \text{and}\ \ V^{1/2}u_n\rightarrow v\ \ \text{in}\ L^{2}(\mathbb{R}^{2}),
	\end{equation}
	as $n\rightarrow+\infty$. So, up to subsequence,
	\begin{equation}\label{banach2}
	\frac{\partial u_n}{\partial x_i}\rightarrow u^i\ (i=1,2)\ \ \text{and}\ \ u_n\rightarrow V^{-1/2}v=:w\ \ \text{a.e. in}\ \mathbb{R}^{2},
	\end{equation}
	as $n\rightarrow+\infty$. Notice that 
	$$
	\int_{\mathbb{R}^2}V(|x|)w^2\,\mathrm{d}x=\int_{\mathbb{R}^2}v^2\,\mathrm{d}x<\infty.
	$$
	On the other hand, for each $R>0$, let $\varphi $ be in $C_0^\infty(\mathbb{R}^2)$ satisfying $\textrm{supp}(\varphi)\subset B_{R+1}$ and $\varphi\equiv 1$ in $B_R$. Thus, by the Poincaré inequality we reach
	\begin{equation}\label{Patos}
	\begin{aligned}
	\int_{B_R}|u_n-u_m|^{2}\,\mathrm{d}x&\leq \int_{B_{R+1}}|\varphi(u_n-u_m)|^{2}\,\mathrm{d}x\leq C_1\int_{B_{R+1}}|\nabla (\varphi (u_n-u_m))|^2\,\mathrm{d}x\\
	 &\leq C_2\int_{B_{R+1}\backslash B_R}|\nabla \varphi|^2|u_n-u_m|^2\,\mathrm{d}x +\int_{B_{R+1}}| \varphi|^2|\nabla u_n-\nabla u_m|^2\,\mathrm{d}x\\
	 &\leq \frac{C_2\|\nabla \varphi\|_\infty^2}{M_R}\int_{B_{R+1}\backslash B_R}V(|x|)|u_n-u_m|^2\,\mathrm{d}x +\|\varphi\|^2_\infty\int_{B_{R+1}}|\nabla u_n-\nabla u_m|^2\,\mathrm{d}x\\
	 &\leq C(R)\|u_n-u_m\|^{2},
	\end{aligned}
	\end{equation}
	where $M_R=\min_{x\in B_{R+1}\backslash B_R}V(|x|)>0$, and so $(u_n)$ is also a Cauchy sequence in $L^{2}(B_R)$. Hence, for each $R>0$, there exists $u_R\in L^{2}(B_R)$ such that
	\begin{equation}\label{banach3}
	u_n\rightarrow u_R\ \ \text{in}\ L^{2}(B_R)\quad \mathrm{and}\quad
	u_n\rightarrow u_R\ \ \text{a.e. in}\ B_R,
	\end{equation}
	as $n\rightarrow+\infty$. Due to \eqref{banach2} and \eqref{banach3}, we conclude that $u_R=w$ in $B_R$. Now, let $\varphi\in C_{0}^{\infty}(\mathbb{R}^{2})$ and $R>0$ such that $\mathrm{supp}(\varphi)\subset B_R$. For each $n\in\mathbb{N}$, we have
	$$
	\int_{\mathbb{R}^2} u_n\frac{\partial \varphi}{\partial x_i}\,\mathrm{d}x=-\int_{\mathbb{R}^2}\frac{\partial u_n}{\partial x_i}\varphi\,\mathrm{d}x,\ \ i=1,2.
	$$
	From \eqref{banach1} and \eqref{banach3} and since $u_R=w$ in $B_R$, we infer that
	$$
	\int_{\mathbb{R}^2} w\frac{\partial \varphi}{\partial x_i}\,\mathrm{d}x=-\int_{\mathbb{R}^2}u^i\varphi\,\mathrm{d}x,\ \ i=1,2.
	$$
	Hence, $w$ has weak derivative and $\partial w/\partial x_i=u^i\in L^{2}(\mathbb{R}^{2}),\ i=1,2$, which implies that $|\nabla w|\in L^{2}(\mathbb{R}^{2})$. Furthermore, from \eqref{banach1}, $\|\nabla w-\nabla u_n\|_2^{2}\rightarrow 0$ as $n\rightarrow+\infty$. Now, since $u_R=w$ in $B_R$, we deduce that $w\in L_{\mathrm{loc}}^{2}(\mathbb{R}^{2})$ and, again from \eqref{banach1},
	$$
	\int_{\mathbb{R}^2}V(|x|)|u_n-w|^2\,\mathrm{d}x=\int_{\mathbb{R}^2}\left|V^{1/2}(|x|)u_n-v\right|^2\mathrm{d}x\rightarrow 0,\ \text{as}\ n\rightarrow+\infty,
	$$
	concluding that $w\in Y$ and $u_n\rightarrow w$ in $Y$. This finishes the proof of the proposition.
\end{proof}

\begin{Remark}
	Equivalently, we can show that the functional space $Y$ can be regarded as the completion of $C_{0}^{\infty}(\mathbb{R}^{2})$ under the norm $\|u\|:=\sqrt{\langle u,u\rangle}$ (see \eqref{inner-def}). 
\end{Remark} 

\begin{Remark}\label{tec0}
	By the estimates obtained in \eqref{Patos}, we observe that, for each open ball $B_R\subset\mathbb{R}^{2}$, the space $Y$ is continuously immersed in $H^{1}(B_R)$, because by the definition of $Y$ it is sufficient to verify that if $u\in Y$, then $u\in L^{2}(B_R)$. Thus, in particular, $Y$ is continuously immersed in $L^{q}(B_R)$ for all $q\geq 1$.
\end{Remark}

Next, we recall a variant of the well-known Radial Lemma (see~\cite{Wang1}) due to W.~A. Strauss~\cite{Strauss}. Hereafter,  $B_R\setminus B_r$ denotes the annulus with interior radius $r$ and exterior radius $R$. For any set $A\subset\mathbb{R}^2$, $A^c$ denotes the complement of $A$.

\begin{Lemma}\label{lema-radial}
Suppose that condition $(V0)$ holds. Then, there exist $R_0>0$ and $C>0$ such
that for all $u\in Y_{\mathrm{rad}}$
$$
|u(x)|\leq C\|u\||x|^{-\frac{a+2}{4}},\quad\text{for all}\ \ |x|\geq R_0.
$$
\end{Lemma}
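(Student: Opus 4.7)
The plan is to set $\beta=(a+2)/2$ and prove the pointwise bound $r^{\beta}u(r)^{2}\le C\|u\|^{2}$ for all $r\ge R_{0}$, from which the statement follows by taking square roots. First I would use condition $(V0)$ to fix $R_{0}>0$ and a constant $c_{0}>0$ such that $V(s)\ge c_{0}s^{a}$ for every $s\ge R_{0}$. Viewing a radial $u\in Y_{\mathrm{rad}}$ as a function of one variable (absolutely continuous on every compact subinterval of $(0,\infty)$ thanks to Remark~\ref{tec0}, and after an approximation by $C_{0}^{\infty}$ functions), I would use the fundamental theorem of calculus to write, for $R>r\ge R_{0}$,
$$
R^{\beta}u(R)^{2}-r^{\beta}u(r)^{2}=\int_{r}^{R}\bigl(\beta s^{\beta-1}u^{2}+2s^{\beta}uu'\bigr)\,\mathrm{d}s.
$$

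Next I would estimate the two pieces on the right. The cross term is treated by Cauchy--Schwarz after splitting the weight $s^{\beta}=s^{\beta-1/2}\cdot s^{1/2}$:
$$
2\int_{r}^{R}s^{\beta}|u||u'|\,\mathrm{d}s\le 2\Bigl(\int_{r}^{R}s^{2\beta-1}u^{2}\,\mathrm{d}s\Bigr)^{1/2}\Bigl(\int_{r}^{R}s\,u'^{2}\,\mathrm{d}s\Bigr)^{1/2}.
$$
The exponent choice $2\beta-1=a+1$ is precisely what lets $V$ absorb the weight: since $V(s)s\ge c_{0}s^{a+1}$ on $[R_{0},\infty)$,
$\int_{r}^{R}s^{2\beta-1}u^{2}\,\mathrm{d}s\le c_{0}^{-1}(2\pi)^{-1}\|u\|^{2}$ and, trivially, $\int_{r}^{R}su'^{2}\,\mathrm{d}s\le(2\pi)^{-1}\|u\|^{2}$. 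For the first term, writing $s^{\beta-1}=s^{a/2}$, the comparison $s^{a/2}\le C\,V(s)\,s$ for $s\ge R_{0}$ (which holds because $-a/2-1<0$ when $a>-2$) gives $\beta\int_{r}^{R}s^{\beta-1}u^{2}\,\mathrm{d}s\le C_{1}\|u\|^{2}$. Putting these bounds together yields a uniform inequality
$$
r^{\beta}u(r)^{2}\le R^{\beta}u(R)^{2}+C_{2}\|u\|^{2}\qquad\text{for all }R>r\ge R_{0}.
$$

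The last step is to send $R\to\infty$. I would argue that $\liminf_{R\to\infty}R^{\beta}u(R)^{2}=0$: if instead $R^{\beta}u(R)^{2}\ge\delta>0$ for all large $R$, then $V(R)u(R)^{2}R\ge c_{0}\delta\,R^{a/2}$, which is not integrable on $[R_{0},\infty)$ since $a>-2$ implies $a/2>-1$, contradicting $u\in Y$. Taking the $\liminf$ in $R$ on the right-hand side then produces $r^{\beta}u(r)^{2}\le C_{2}\|u\|^{2}$, i.e. $|u(r)|\le C\|u\|\,r^{-(a+2)/4}$ for all $r\ge R_{0}$.

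The main obstacle is the careful matching of weights: the exponent $\beta=(a+2)/2$ is forced by the requirement that one of the Cauchy--Schwarz factors collapse onto the Dirichlet part of $\|u\|$ (hence $s^{1/2}$) while the other collapses onto the potential part via $s^{2\beta-1}=s\cdot s^{a}$; the role of the hypothesis $a>-2$ in $(V0)$ appears twice, first to control the lower-order term $s^{a/2}u^{2}$ by $V\,s\,u^{2}$, and then in the final integrability contradiction that rules out non-decay at infinity. A secondary technical point is justifying the pointwise FTC for a radial $Y$-function, which I would handle by approximation with smooth radial functions coming from the density statement in Remark~2.3.
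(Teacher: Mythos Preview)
The paper does not give its own proof of this lemma; it simply records the statement and cites Su--Wang--Willem \cite{Wang1} (with the classical Strauss lemma \cite{Strauss} as background). Your argument is correct and is precisely the weighted Strauss-type computation one finds in that reference: differentiate $s^{\beta}u(s)^{2}$ with $\beta=(a+2)/2$, split the cross term via Cauchy--Schwarz so that one factor matches $\int s\,u'^{2}\,\mathrm{d}s=(2\pi)^{-1}\|\nabla u\|_{2}^{2}$ and the other matches $\int s\cdot s^{a}u^{2}\,\mathrm{d}s\le c_{0}^{-1}(2\pi)^{-1}\int V u^{2}$, and then kill the boundary term at infinity by the non-integrability contradiction coming from $a>-2$. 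The only cosmetic point is that the density of smooth radial functions in $Y_{\mathrm{rad}}$ that you invoke is the content of the paper's Remark immediately following the Hilbert-space proposition (not Remark~\ref{tec0}); together with the local $H^{1}$ embedding of Remark~\ref{tec0} this indeed justifies the pointwise fundamental-theorem-of-calculus step for general $u\in Y_{\mathrm{rad}}$.
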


We need the following embedding result:

\begin{Lemma}\label{imersao-principal}
Suppose that $(V,Q)\in \mathcal{K}$. Then the embeddings $Y_{\mathrm{rad}}\hookrightarrow L^q(\mathbb{R}^2;Q)$ are compact for all $2\leq q<\infty$.
\end{Lemma}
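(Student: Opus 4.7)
The plan is to take a bounded sequence $(u_n)\subset Y_{\mathrm{rad}}$, pass to a subsequence with $u_n\rightharpoonup u$ weakly in $Y_{\mathrm{rad}}$, set $v_n:=u_n-u$ (so $v_n\rightharpoonup 0$ and $(v_n)$ is still bounded) and show $\int_{\mathbb{R}^2}Q|v_n|^q\,\mathrm{d}x\to 0$. I would partition $\mathbb{R}^2=B_\rho\cup(B_R\setminus B_\rho)\cup B_R^c$ with $0<\rho<R$ to be chosen, and handle the three pieces separately.

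On the annulus, $Q$ is continuous and positive, hence bounded, so it suffices to control $\int_{B_R}|v_n|^q\,\mathrm{d}x$. Remark~\ref{tec0} gives the continuous inclusion $Y\hookrightarrow H^1(B_R)$, and Rellich--Kondrachov together with the two-dimensional Sobolev embedding $H^1(B_R)\hookrightarrow L^q(B_R)$ (valid for every $q<\infty$) yields $v_n\to 0$ in $L^q(B_R)$, killing this piece as $n\to\infty$ for any fixed $\rho,R$. On the inner ball, $Q(r)\leq Cr^{b_0}$ by $(Q0)$, and since $b_0>-(4-\mu)/2>-2$ one can pick $t'>1$ with $t'b_0+2>0$, so that $Q^{t'}\in L^1(B_\rho)$ with $\int_{B_\rho}Q^{t'}\,\mathrm{d}x=O(\rho^{t'b_0+2})$. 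H\"older's inequality with conjugate exponent $t$, combined with Remark~\ref{tec0} (which bounds $\int_{B_1}|v_n|^{qt}\,\mathrm{d}x$ uniformly by a power of $\|v_n\|$), then controls the inner integral by $C\rho^{(t'b_0+2)/t'}$ uniformly in $n$.

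The delicate step is the exterior. Lemma~\ref{lema-radial} supplies $|v_n(x)|\leq C\|v_n\|\,|x|^{-(a+2)/4}$ for $|x|\geq R_0$, and $(Q0)$ gives $Q(r)\leq Cr^b$ at infinity. To cover the whole range $q\geq 2$ I would couple this pointwise decay with the $V$-weighted $L^2$ information carried by $\|v_n\|$, writing
\[
\int_{B_R^c}Q|v_n|^q\,\mathrm{d}x=\int_{B_R^c}\frac{Q(|x|)\,|v_n|^{q-2}}{V(|x|)}\,V(|x|)\,v_n^2\,\mathrm{d}x\leq\Bigl(\sup_{|x|\geq R}\frac{Q(|x|)\,|v_n(x)|^{q-2}}{V(|x|)}\Bigr)\|v_n\|^2.
\]
Bounding the supremum using Lemma~\ref{lema-radial} and the polynomial behaviour of $V$ and $Q$ at infinity, one gets a quantity of order $\|v_n\|^{q-2}R^{\,b-a-(q-2)(a+2)/4}$; the hypothesis $b<a(4-\mu)/4$ in $(Q0)$ is calibrated so that this exponent of $R$ is negative, and the supremum therefore tends to $0$ as $R\to\infty$, uniformly in $n$.

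Combining the three estimates: given $\varepsilon>0$, first fix $\rho$ small and $R$ large to bound the inner and outer integrals by $\varepsilon$ uniformly in $n$, then send $n\to\infty$ to bound the middle integral by $\varepsilon$; this delivers $\int_{\mathbb{R}^2}Q|v_n|^q\,\mathrm{d}x\to 0$ and hence compactness. The main obstacle, as is typical in problems with vanishing potentials, is the outer estimate: one must tune the Radial Lemma decay against the asymptotic growth of $V$ and the decay of $Q$ so that uniform smallness holds across the entire range $q\in[2,\infty)$; the other two pieces are standard applications of Rellich--Kondrachov and H\"older.
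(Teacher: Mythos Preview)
The paper does not actually supply a proof of this lemma; the closest analogues are Lemma~\ref{tec1} and Corollary~\ref{tec2}, which carry out precisely your three-region decomposition but for the weight $Q^{4/(4-\mu)}$ rather than $Q$. So in spirit your strategy matches the paper's.

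Your exterior estimate, however, contains a genuine gap. You assert that the hypothesis $b<a(4-\mu)/4$ makes the exponent $b-a-(q-2)(a+2)/4$ negative, but at $q=2$ this exponent is simply $b-a$, and when $-2<a<0$ one has $a(4-\mu)/4>a$ (since $(4-\mu)/4\in(\tfrac12,1)$ and $a<0$), so $(Q0)$ permits $b>a$. Concretely, with $\mu=1$, $V(r)=r^{-1}$, $Q(r)=r^{-4/5}$ one has $a=-1$, $b=-4/5<-3/4=a(4-\mu)/4$, yet $b-a=1/5>0$; your supremum then blows up as $R\to\infty$, and the argument fails for every $q\in[2,\,2+4(b-a)/(a+2))=[2,14/5)$. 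Worse, the gap cannot be repaired: in this example the lemma itself is false at $q=2$. The radial functions $u_R(x):=R^{-1/4}|x|^{-1/4}$ on the annulus $R\le|x|\le 2R$ (smoothly cut off) satisfy $\int V\,u_R^2\,\mathrm{d}x=4\pi(\sqrt{2}-1)$ and $\int|\nabla u_R|^2\,\mathrm{d}x=O(R^{-1})$, so $\|u_R\|$ is bounded, while $\int Q\,u_R^2\,\mathrm{d}x\asymp R^{1/5}\to\infty$; hence even the continuous embedding $Y_{\mathrm{rad}}\hookrightarrow L^2(\mathbb{R}^2;Q)$ fails. The point is that the bound $b<a(4-\mu)/4$ in $(Q0)$ is tuned for the weight $Q^{4/(4-\mu)}$ --- compare the exponent $\tfrac{4b}{4-\mu}-a$ arising in the paper's Lemma~\ref{tec1} at the bottom of its range --- and is strictly weaker than the condition $b<a$ one would need for the plain-$Q$ embedding at $q=2$ when $a<0$. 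Since the paper never actually invokes Lemma~\ref{imersao-principal} (all later arguments run through Lemma~\ref{tec1} and its corollaries), this does not affect the main theorems, but the statement as written is too strong.
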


\begin{Lemma}\label{tec1}
Suppose that $(V,Q)\in \mathcal{K}$ and let $0<r<1/2$ and $R\geq R_0>1$, where $R_0$ is given by Lemma \ref{lema-radial}.  Then, for all $q\geq \frac{4-\mu}{2}$, it holds
\begin{align}\label{estimativa1}
&\int_{\mathbb{R}^{2}}Q(|x|)^{\frac{4}{4-\mu}}|u|^{\frac{4q}{4-\mu}}\,\mathrm{d}x\nonumber\\
&\leq C\left(r^{\frac{4b_0}{4-\mu}+\frac{2}{\sigma}}\|u\|^{\frac{4q}{4-\mu}}+\int_{B_R\setminus B_{r}}|u|^{\frac{4q}{4-\mu}}\,\mathrm{d}x+
R^{\frac{4b}{4-\mu}-a-\left(\frac{a+2}{2}\right)\left(\frac{4q}{4-\mu}-2\right)}\|u\|^{\frac{4q}{4-\mu}}\right),
\end{align}
for all $u\in Y_{\mathrm{rad}}$ and for some $C>0$ and $\sigma>1$ be such that $b_0\sigma>-\frac{4-\mu}{2}$.
\end{Lemma}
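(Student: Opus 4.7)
The plan is to decompose $\mathbb{R}^2$ as the disjoint union $B_r \cup (B_R \setminus B_r) \cup B_R^c$ and estimate the integral $\int_{\mathbb{R}^2}Q(|x|)^{4/(4-\mu)}|u|^{4q/(4-\mu)}\,\mathrm{d}x$ on each region with a tool tailored to the behavior of $V$ and $Q$ there. The three resulting estimates will correspond one-to-one to the three terms on the right-hand side of \eqref{estimativa1}.

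On $B_r$, hypothesis $(Q0)$ gives $Q(|x|) \leq C |x|^{b_0}$ near the origin, so the weight $Q^{4/(4-\mu)}$ is dominated pointwise by $C|x|^{4b_0/(4-\mu)}$. I would apply H\"older's inequality with exponents $\sigma$ and $\sigma' = \sigma/(\sigma-1)$, where $\sigma>1$ is chosen so that $b_0\sigma > -(4-\mu)/2$ (possible thanks to the strict inequality $b_0 > -(4-\mu)/2$ in $(Q0)$). In polar coordinates, $\bigl(\int_{B_r}|x|^{4b_0\sigma/(4-\mu)}\,\mathrm{d}x\bigr)^{1/\sigma}$ computes to a constant multiple of $r^{4b_0/(4-\mu)+2/\sigma}$, while the remaining factor $\bigl(\int_{B_r}|u|^{4q\sigma'/(4-\mu)}\,\mathrm{d}x\bigr)^{1/\sigma'}$ is controlled by $C\|u\|^{4q/(4-\mu)}$ via the continuous embedding $Y \hookrightarrow L^p(B_r)$ for every $p\geq 1$ recorded in Remark \ref{tec0}. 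On the annulus $B_R\setminus B_r$, the continuity and positivity of $Q$ on the compact set $\overline{B_R\setminus B_r}\subset(0,\infty)$ show that $Q$ is bounded there, and absorbing that constant into $C$ produces the middle term.

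On $B_R^c$ with $R\geq R_0>1$, $(Q0)$ gives $Q(|x|)\leq C|x|^{b}$. I would split $|u|^{4q/(4-\mu)} = |u|^{4q/(4-\mu)-2}\cdot u^{2}$, which is legitimate since $q\geq(4-\mu)/2$ makes the first exponent nonnegative. Lemma \ref{lema-radial} bounds the first factor pointwise in terms of a negative power of $|x|$ together with an explicit power of $\|u\|$. For the remaining $u^2$, I would invoke $V(|x|)\geq c|x|^{a}$ at infinity from $(V0)$, writing $u^{2}\leq V(|x|)u^{2}/(c|x|^{a})$ to extract a factor $|x|^{-a}$ and convert the $u^2$-integral into the $V$-weighted norm $\int V u^{2}\,\mathrm{d}x\leq\|u\|^{2}$. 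Collecting all powers of $|x|$ and using $b<a(4-\mu)/4$ together with $a>-2$ should ensure the accumulated exponent is non-positive, which permits the pointwise domination $|x|^{\mathrm{exp}}\leq R^{\mathrm{exp}}$ on $B_R^c$; what remains is exactly the third term of the target bound.

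The main obstacle is the careful bookkeeping of $|x|$-exponents in the exterior estimate: one must arrange the split between the pointwise Strauss-type bound and the $V$-weighted $L^{2}$-absorption so that the net exponent of $R$ matches the expression $\frac{4b}{4-\mu}-a-\bigl(\frac{a+2}{2}\bigr)\bigl(\frac{4q}{4-\mu}-2\bigr)$, and then verify that the sign conditions in $(V0)$ and $(Q0)$ are precisely what is required to guarantee this exponent is non-positive so the reduction $|x|^{\mathrm{exp}}\leq R^{\mathrm{exp}}$ is available on $B_R^c$.
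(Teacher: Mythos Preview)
Your proposal is correct and follows the same three-region decomposition as the paper, with the same tools on each piece (H\"older near the origin, the Radial Lemma together with $(V0)$ at infinity, boundedness of $Q$ on the annulus). The only cosmetic difference is that on $B_r$ the paper writes out an explicit cut-off argument (take $\varphi\in C_0^\infty(B_1)$ with $\varphi\equiv 1$ on $B_{1/2}$, then use $H_0^1(B_1)\hookrightarrow L^s(B_1)$ and bound $\|\nabla(\varphi u)\|_2$ via the minimum of $V$ on $B_1\setminus B_{1/2}$) rather than citing Remark~\ref{tec0} directly; one small caution with your shortcut is to invoke the embedding on a \emph{fixed} ball, e.g.\ $Y\hookrightarrow L^p(B_{1/2})$ and then use $B_r\subset B_{1/2}$, so that the resulting constant is independent of $r$.
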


\begin{proof}
Let $R\geq R_0>1$. By the hypotheses $(V0)$ and $(Q0)$, there exists $C_0>0$ such that
\begin{equation}\label{V-Q-foradabola}
Q(|x|)\leq C_0|x|^{b}\ \ \textrm{and}\ \ V(|x|)\geq C_0|x|^{a},\quad\textrm{for all}\ |x|\geq R.
\end{equation}
Next, we estimate the integral on the complement of the ball $B_R$. From Lemma \ref{lema-radial} and \eqref{V-Q-foradabola} we have
\begin{align*}
\int_{B_R^{c}}Q(|x|)^{\frac{4}{4-\mu}}|u|^{\frac{4q}{4-\mu}}\,\mathrm{d}x&\leq C_1\int_{B_R^{c}}|x|^{\frac{4b}{4-\mu}}|u|^{\frac{4q}{4-\mu}}\,\mathrm{d}x\\
&=\frac{C_1}{C_0}\int_{B_R^{c}}|x|^{\frac{4b}{4-\mu}}|x|^{-a}C_0|x|^{a}u^{2}|u|^{\frac{4q}{4-\mu}-2}\,\mathrm{d}x\\
&\leq\frac{C_1}{C_0}\int_{B_R^{c}}|x|^{\frac{4b}{4-\mu}-a}V(|x|)u^{2}\left(C|x|^{\left(-\frac{a+2}{2}\right)\left(\frac{4q}{4-\mu}-2\right)}\|u\|^{\frac{4q}{4-\mu}-2}\right)\mathrm{d}x\\
&=C_2\|u\|^{\frac{4q}{4-\mu}-2}\int_{B_R^{c}}|x|^{\frac{4b}{4-\mu}-a-\left(\frac{a+2}{2}\right)\left(\frac{4q}{4-\mu}-2\right)}V(|x|)u^{2}\,\mathrm{d}x.
\end{align*}
Since $a>-2$, $b<\frac{a(4-\mu)}{4}$ and $q\geq \frac{4-\mu}{2}$, we have that $\frac{4b}{4-\mu}-a-\left(\frac{a+2}{2}\right)\left(\frac{4q}{4-\mu}-2\right)<0$. Thus, we get
\begin{equation}\label{est1}
\int_{B_R^{c}}Q(|x|)^{\frac{4}{4-\mu}}|u|^{\frac{4q}{4-\mu}}\,\mathrm{d}x\leq C_2 R^{\frac{4b}{4-\mu}-a-\left(\frac{a+2}{2}\right)\left(\frac{4q}{4-\mu}-2\right)}\|u\|^{\frac{4q}{4-\mu}}.
\end{equation}
On the other hand, again by hypothesis $(Q0)$, there exists $C_0>0$ such that
\begin{equation}\label{Q-dentrodabola}
Q(|x|)\leq C_0|x|^{b_0},\quad\textrm{for all}\ 0<|x|<r_0.
\end{equation}
Now we shall estimate the integral on the ball $B_r$. For that aim, consider $\sigma>1$ such that $b_0\sigma>-\frac{4-\mu}{2}$ and a radial cut-off function $\varphi\in C_0^{\infty}(\mathbb{R}^{2})$ verifying
$$ 
0\leq \varphi\leq 1\ \text{in}\ \mathbb{R}^{2},\ \varphi\equiv 1\ \text{in}\ B_{1/2},\ \textrm{supp}(\varphi)\subset B_1\ \ \text{and}\ \ |\nabla\varphi|\leq 1\ \text{in}\ \mathbb{R}^{2}. 
$$
By \eqref{Q-dentrodabola}, Hölder's inequality and the continuous embedding $H_0^{1}(B_{1})\hookrightarrow L^{s}(B_{1})$, for all $s\geq 1$, we have
\begin{align}\label{est2}
\int_{B_r}Q(|x|)^{\frac{4}{4-\mu}}|u|^{\frac{4q}{4-\mu}}\,\mathrm{d}x&\leq C_3\int_{B_{r}}|x|^{\frac{4b_0}{4-\mu}}|\varphi u|^{\frac{4q}{4-\mu}}\,\mathrm{d}x\nonumber\\
&\leq C_3\left(\int_{B_{r}}|x|^{\frac{4b_0\sigma}{4-\mu}}\,\mathrm{d}x\right)^{\frac{1}{\sigma}}\left(\int_{B_{r}}|\varphi u|^{\frac{4q\sigma}{(4-\mu)(\sigma-1)}}\,\mathrm{d}x\right)^{\frac{\sigma-1}{\sigma}}\nonumber\\
&\leq C_4r^{\frac{4b_0}{4-\mu}+\frac{2}{\sigma}}\left(\int_{B_1}|\varphi u|^{\frac{4q\sigma}{(4-\mu)(\sigma-1)}}\,\mathrm{d}x\right)^{\frac{\sigma-1}{\sigma}}\nonumber\\
&\leq C_5r^{\frac{4b_0}{4-\mu}+\frac{2}{\sigma}}\left(\int_{B_1}|\nabla(\varphi u)|^{2}\,\mathrm{d}x\right)^{\frac{2q}{4-\mu}}\nonumber\\
&=C_5r^{\frac{4b_0}{4-\mu}+\frac{2}{\sigma}}\left(\int_{B_1\setminus B_{1/2}}|u\nabla\varphi|^{2}\,\mathrm{d}x+\int_{B_1}|\varphi\nabla u)|^{2}\,\mathrm{d}x\right)^{\frac{2q}{4-\mu}}\nonumber\\
&\leq C_6r^{\frac{4b_0}{4-\mu}+\frac{2}{\sigma}}\left(\int_{B_{1}\setminus B_{1/2}}\frac{C'}{C'}u^{2}\,\mathrm{d}x+\int_{B_1}|\nabla u|^{2}\,\mathrm{d}x\right)^{\frac{2q}{4-\mu}}\nonumber\\
&\leq C_7r^{\frac{4b_0}{4-\mu}+\frac{2}{\sigma}}\left(\int_{B_{1}\setminus B_{1/2}}V(|x|)u^{2}\,\mathrm{d}x+\int_{B_1}|\nabla u|^{2}\,\mathrm{d}x\right)^{\frac{2q}{4-\mu}}\nonumber\\
&\leq C_8r^{\frac{4b_0}{4-\mu}+\frac{2}{\sigma}}\|u\|^{\frac{4q}{4-\sigma}},
\end{align}
where $C'=\min_{1/2\leq t\leq 1}V(t)$. Next we shall estimate the integral on $B_R\setminus B_{r}$. Denoting by $M_{r,R}=\displaystyle\max_{r\leq t\leq R} Q(t)$, we get
\begin{align}\label{est3}
\int_{B_R\setminus B_{r}}Q(|x|)^{\frac{4}{4-\mu}}|u|^{\frac{4q}{4-\mu}}\,\mathrm{d}x\leq M_{r,R}^{\frac{4}{4-\mu}} \int_{B_R\setminus B_{r}}|u|^{\frac{4q}{4-\mu}}\,\mathrm{d}x.
\end{align}
Hence, joining the estimates \eqref{est1}, \eqref{est2} and \eqref{est3} the result readily follows considering an appropriate positive constant $C$.
\end{proof}

\begin{Corollary}\label{tec2}
Under the conditions of Lemma \ref{tec1}, if $u_n\rightharpoonup u$ weakly in $Y_{\mathrm{rad}}$ then
$$
\int_{\mathbb{R}^{2}}Q(|x|)^{\frac{4}{4-\mu}}|u_n-u|^{\frac{4q}{4-\mu}}\,\mathrm{d}x\rightarrow 0,\,\,\, \textrm{as}\,\,\, n\rightarrow+\infty.
$$
\end{Corollary}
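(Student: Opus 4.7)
The plan is to apply Lemma \ref{tec1} to the difference $v_n := u_n - u \in Y_{\mathrm{rad}}$, which is bounded in $Y_{\mathrm{rad}}$ by some constant $M > 0$ since weakly convergent sequences are bounded. Setting $\kappa := 4q/(4-\mu)$, Lemma \ref{tec1} yields, for each $0 < r < 1/2$ and $R \geq R_0$,
\[
\int_{\mathbb{R}^2} Q(|x|)^{\frac{4}{4-\mu}} |v_n|^{\kappa} \, \mathrm{d}x
\leq C\Big( r^{\frac{4b_0}{4-\mu} + \frac{2}{\sigma}} M^{\kappa}
 + \int_{B_R \setminus B_r} |v_n|^{\kappa} \, \mathrm{d}x
 + R^{\gamma} M^{\kappa} \Big),
\]
where $\gamma := \frac{4b}{4-\mu} - a - \frac{a+2}{2}(\kappa - 2)$. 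A standard $\varepsilon/3$ argument in the parameters $r$, $R$, and $n$ will do the job.

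First I would fix $\varepsilon > 0$ and control the two ``boundary'' terms. Since $\sigma > 1$ is chosen so that $b_0\sigma > -\frac{4-\mu}{2}$, the exponent $\frac{4b_0}{4-\mu} + \frac{2}{\sigma}$ is strictly positive, so one may take $r > 0$ so small that the first term is less than $\varepsilon/3$. The hypotheses $a > -2$, $b < a(4-\mu)/4$ together with $q \geq (4-\mu)/2$ were shown in the proof of Lemma \ref{tec1} to force $\gamma < 0$, so $R$ may be enlarged so that $R^{\gamma} M^{\kappa} < \varepsilon/(3C)$. With $r$ and $R$ now fixed, the first and third terms contribute at most $2\varepsilon/3$ uniformly in $n$.

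For the middle term on the fixed annulus $B_R \setminus B_r$, I would invoke Remark \ref{tec0}, which gives the continuous embedding $Y \hookrightarrow H^1(B_R)$. The Rellich--Kondrachov theorem in dimension two provides the compact embedding $H^1(B_R) \hookrightarrow\hookrightarrow L^{\kappa}(B_R)$, and hence $v_n \to 0$ strongly in $L^{\kappa}(B_R \setminus B_r)$ as $n \to \infty$. Therefore, for $n$ sufficiently large, the middle integral is smaller than $\varepsilon/(3C)$, which yields the desired conclusion.

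The only delicate point, already addressed inside Lemma \ref{tec1}, is checking the two sign conditions on the exponents in $r$ and $R$; once those are in hand, the corollary is essentially an interpolation between ``small ball at the origin'' (handled via H\"older and the Sobolev embedding on $B_1$), ``compact annulus'' (handled via Rellich--Kondrachov), and ``tail at infinity'' (handled via the Strauss-type decay in Lemma \ref{lema-radial}).
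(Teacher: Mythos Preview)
Your proposal is correct and follows essentially the same route as the paper: apply the three-region estimate of Lemma~\ref{tec1} to $u_n-u$, use the signs of the exponents in $r$ and $R$ to make the inner-ball and outer-region contributions small, and dispose of the annulus term via the compact embedding coming from Remark~\ref{tec0}. The paper's proof is slightly terser (it phrases the conclusion via a $\limsup$ rather than an explicit $\varepsilon/3$ bookkeeping), but the argument is the same.
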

\begin{proof}
Given any $\varepsilon>0$, fix $R>R_0$ large enough and $0<r\leq 1/2$ small enough such that
$$
r^{\frac{4b_0}{4-\mu}+\frac{2}{\sigma}}<\varepsilon\quad\text{and}\quad R^{\frac{4b}{4-\mu}-a-\left(\frac{a+2}{2}\right)\left(\frac{4q}{4-\mu}-2\right)}<\varepsilon.
$$
Since $u_n\rightharpoonup u$ weakly in $Y_{\mathrm{rad}}$, we have that $\|u_n-u\|\leq C_1$ for all $n\in\mathbb{N}$ and, by Lemma \ref{tec0},
$$
\int_{B_R\setminus B_{r}}|u_n-u|^{\frac{4q}{4-\mu}}\,\mathrm{d}x\rightarrow 0,
$$
as $n\rightarrow+\infty$. Hence, by estimate \eqref{estimativa1}
$$
\int_{\mathbb{R}^{2}}Q(|x|)^{\frac{4}{4-\mu}}|u_n-u|^{\frac{4q}{4-\mu}}\,\mathrm{d}x\leq C\left(\varepsilon C_1^{\frac{4q}{4-\mu}}+o_n(1)+\varepsilon C_1^{\frac{4q}{4-\mu}}\right),
$$
and so
$$
\limsup_{n\rightarrow+\infty}\int_{\mathbb{R}^{2}}Q(|x|)^{\frac{4}{4-\mu}}|u_n-u|^{\frac{4q}{4-\mu}}\,\mathrm{d}x\leq \varepsilon C\left(C_1^{\frac{4q}{4-\mu}} + C_1^{\frac{4q}{4-\mu}}\right)
$$
which shows the result.
\end{proof}

Another consequence of Lemma \ref{tec1} and \eqref{est3} is the following:
\begin{Corollary}\label{tec3}
Under the conditions of Lemma \ref{tec1}, there exists $C>0$ such that, for each $q\geq \frac{4-\mu}{2}$, we have
$$
\int_{\mathbb{R}^{2}}Q(|x|)^{\frac{4}{4-\mu}}|u|^{\frac{4q}{4-\mu}}\,\mathrm{d}x\leq C\|u\|^{\frac{4q}{4-\mu}},
$$
for all $u\in Y_{\mathrm{rad}}$.
\end{Corollary}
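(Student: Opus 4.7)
The proof is essentially a direct consequence of Lemma \ref{tec1} combined with the local embedding noted in Remark \ref{tec0}. The plan is to fix the free parameters $r$ and $R$ in the estimate \eqref{estimativa1} once and for all, so that the first and third terms on the right-hand side already give contributions of the form $C\|u\|^{4q/(4-\mu)}$ with explicit (finite) constants. The only term that is not manifestly of this form is the middle one, namely $\int_{B_R\setminus B_r}|u|^{4q/(4-\mu)}\,\mathrm{d}x$, and handling it is where all the work goes.

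Concretely, I would first choose $r=1/2$ and $R=R_0>1$ as in Lemma \ref{lema-radial}, so that $r^{4b_0/(4-\mu)+2/\sigma}$ and $R^{4b/(4-\mu)-a-\left(\frac{a+2}{2}\right)(4q/(4-\mu)-2)}$ are fixed (finite) numbers. Then I would bound the middle term by enlarging the domain: $\int_{B_R\setminus B_r}|u|^{4q/(4-\mu)}\,\mathrm{d}x\le \int_{B_R}|u|^{4q/(4-\mu)}\,\mathrm{d}x$. By Remark \ref{tec0}, the space $Y$ is continuously embedded in $L^s(B_R)$ for every $s\ge 1$; applying this with $s=4q/(4-\mu)$ (which is $\ge 2$ since $q\ge (4-\mu)/2$) yields
$$
\int_{B_R}|u|^{4q/(4-\mu)}\,\mathrm{d}x \le C_{q,R}\,\|u\|^{4q/(4-\mu)}.
$$

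Plugging these three bounds back into \eqref{estimativa1} and collecting all the $q$- and $R$-dependent multiplicative constants into a single $C>0$, we obtain the desired inequality. There is no real obstacle here: the only thing to verify is that the exponents in Lemma \ref{tec1} remain controlled after fixing $r,R$, which follows from the sign conditions $a>-2$, $b<a(4-\mu)/4$ and $b_0\sigma>-(4-\mu)/2$ that were already used in the proof of Lemma \ref{tec1}. Note that the constant $C$ produced this way depends on $q$ (through the embedding constant $C_{q,R}$ and through the fixed power of $R$), which is consistent with the statement, since the dependence on $u$ is what matters.
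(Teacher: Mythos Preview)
Your argument is correct and matches the paper's intended proof: the paper itself says the corollary is a consequence of Lemma~\ref{tec1} together with \eqref{est3}, and the only missing ingredient---bounding $\int_{B_R\setminus B_r}|u|^{4q/(4-\mu)}\,\mathrm{d}x$ by $C\|u\|^{4q/(4-\mu)}$ via the local embedding of Remark~\ref{tec0}---is exactly what you supply. One cosmetic point: Lemma~\ref{tec1} is stated for $0<r<1/2$ (strict), so pick, say, $r=1/4$ rather than $r=1/2$; nothing else changes.
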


Inspired by the papers~\cite{Ad-Sand, Cao, Calanchi, Moser,Ruf1,Trudinger} and in order to study equation~\eqref{P}, we establish a new version of the Trudinger-Moser inequality for functions in $Y_{\mathrm{rad}}$ which will plays an important role in our arguments. In \cite{Ad-Sand}, the authors have obtained a singular version of the Trudinger-Moser inequality. More precisely, they proved that if $\Omega$ is a bounded domain in $\mathbb{R}^2$ containing the origin, $u\in H_0^1(\Omega)$ and $\beta\in[0,2)$, then there exists a positive constant $C=C(\alpha,\beta,\Omega)$ such that
\begin{equation}\label{TM-A&S}
\sup_{\|\nabla u\|_{L^2(\Omega)}\leq1}\int_{\Omega}|x|^{-\beta}e^{\alpha u^2}\,\mathrm{d}x\leq C
\end{equation}
if and only if $0<\alpha\leq4\pi(1-\beta/2)$. On the other hand, in \cite[Proposition 1.1]{Calanchi}, the authors  have proved that if $u\in H_{0,\textrm{rad}}^1(B_1)$ and $\beta>0$, then there exists a positive constant $C=C(p,\beta)$ such that
\begin{equation}\label{TM Calanchi}
\sup_{\|\nabla u\|_{L^2(B_1)}\leq1}\int_{B_1}|x|^{\beta}(e^{p u^2}-1-pu^2)\,\mathrm{d}x\leq C
\end{equation}
if and only if $0<p\leq4\pi(1+\beta/2)$.
\begin{Proposition}\label{TM}
Suppose that $(V,Q)\in \mathcal{K}$. Then, for any $u\in Y_{\mathrm{rad}}$ and
$\alpha>0$, we have that $Q(|x|)^{\frac{4}{4-\mu}}(e^{\alpha u^2}-1)\in L^1(\mathbb{R}^2)$. Furthermore, if
$0<\alpha< 4\pi(1+\frac{2b_0}{4-\mu})$ then
$$
\sup_{u\in Y_{\mathrm{rad}},\ \|u\|\leq1}\int_{\mathbb{R}^2}Q(|x|)^{\frac{4}{4-\mu}}(e^{\alpha u^2}-1)\,\mathrm{d}x<\infty. 
$$
\end{Proposition}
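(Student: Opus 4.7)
The strategy is to split $\mathbb{R}^2=B_1\cup(B_{R_0}\setminus B_1)\cup B_{R_0}^{c}$, where $R_0\geq 1$ is the radius furnished by Lemma \ref{lema-radial} (enlarged if necessary). Set $\alpha^{*}:=4\pi(1+\tfrac{2b_0}{4-\mu})$ and $\gamma:=\tfrac{4b_0}{4-\mu}>-2$. Using $(Q0)$ and the continuity of $Q$ on $(0,\infty)$, one has $Q(|x|)^{4/(4-\mu)}\leq C_1|x|^{\gamma}$ on $B_1$, $Q(|x|)^{4/(4-\mu)}\leq C_2|x|^{4b/(4-\mu)}$ on $B_{R_0}^{c}$, and $Q^{4/(4-\mu)}$ is bounded on the compact annulus. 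Both the pointwise integrability and the uniform bound reduce to controlling the three regions.

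\textbf{Inner ball.} To convert to the $H_0^{1}$ setting required by the sharp weighted inequalities \eqref{TM-A&S} (when $\gamma\leq 0$) or \eqref{TM Calanchi} (when $\gamma>0$), I would decompose $u|_{B_1}=c+\tilde u$, where $c:=u(1)$ is well defined by radiality and $\tilde u:=u-c\in H_{0,\mathrm{rad}}^{1}(B_1)$ satisfies $\|\nabla\tilde u\|_{L^{2}(B_1)}=\|\nabla u\|_{L^{2}(B_1)}\leq\|u\|\leq 1$. The trace inequality on $B_2\setminus B_1$ combined with the embedding $Y\hookrightarrow H^{1}(B_2)$ of Remark \ref{tec0} produces the uniform bound $c^{2}\leq C\|u\|^{2}$. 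Using $u^{2}\leq(1+\delta)\tilde u^{2}+(1+\tfrac{1}{\delta})c^{2}$ for $\delta>0$,
\[
\int_{B_1}Q^{4/(4-\mu)}(e^{\alpha u^{2}}-1)\,dx\leq e^{\alpha(1+1/\delta)c^{2}}\int_{B_1}Q^{4/(4-\mu)}\bigl(e^{\alpha(1+\delta)\tilde u^{2}}-1\bigr)\,dx+K_{\delta},
\]
with $K_{\delta}:=(e^{\alpha(1+1/\delta)c^{2}}-1)\int_{B_1}Q^{4/(4-\mu)}\,dx$ finite because $\gamma>-2$. For $\alpha<\alpha^{*}$, I would fix $\delta>0$ so small that $\alpha(1+\delta)<\alpha^{*}=4\pi(1+\gamma/2)$; then the remaining weighted integral is uniformly bounded by applying \eqref{TM-A&S} or \eqref{TM Calanchi} to $\tilde u$ after invoking $Q^{4/(4-\mu)}\leq C_1|x|^{\gamma}$.

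\textbf{Middle annulus and exterior.} On $B_{R_0}\setminus B_1$ the weight $Q^{4/(4-\mu)}$ is bounded, and for radial $u$ with $\|u\|\leq 1$, Cauchy--Schwarz gives $|u(r)-u(1)|^{2}\leq\tfrac{\log r}{2\pi}\|\nabla u\|_{L^{2}(\mathbb{R}^{2})}^{2}\leq\tfrac{\log R_0}{2\pi}$ for $r\in[1,R_0]$; combined with the trace bound on $u(1)$, this yields $|u|\leq M(R_0)$ uniformly on the annulus, so $e^{\alpha u^{2}}-1$ stays uniformly bounded there. On $B_{R_0}^{c}$, Lemma \ref{lema-radial} gives $|u(x)|^{2}\leq C|x|^{-(a+2)/2}$ for $\|u\|\leq 1$, so $e^{\alpha u^{2}}-1\leq 2\alpha u^{2}$; the strict inequality $\tfrac{4b}{4-\mu}<a$ from $(Q0)$ yields $|x|^{4b/(4-\mu)}\leq|x|^{a}$ on $B_{R_0}^{c}$, and with $V(|x|)\geq C_0|x|^{a}$ from $(V0)$,
\[
\int_{B_{R_0}^{c}}Q^{4/(4-\mu)}(e^{\alpha u^{2}}-1)\,dx\leq\frac{2\alpha C_2}{C_0}\int_{B_{R_0}^{c}}V(|x|)u^{2}\,dx\leq\frac{2\alpha C_2}{C_0}\|u\|^{2}.
\]
Summing the three estimates gives the uniform supremum bound. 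The pointwise integrability claim, for arbitrary $u\in Y_{\mathrm{rad}}$ and $\alpha>0$, follows from the same splitting, replacing the uniform control on $B_1$ by the mere finiteness of the weighted Trudinger--Moser integrals \eqref{TM-A&S}/\eqref{TM Calanchi} at each fixed $u\in H^1(B_1)$ and $\alpha$.

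\textbf{Main obstacle.} The delicate point is the inner-ball estimate, because the critical exponent $\alpha^{*}$ must be matched exactly. The decomposition $u=c+\tilde u$ does not inflate $\|\nabla\tilde u\|_{2}$, and the AM--GM loss $1+\delta$ can be chosen arbitrarily close to $1$, so the strict inequality $\alpha<\alpha^{*}$ is exactly what is needed. The trace bound $c^{2}\leq C\|u\|^{2}$ is essential: without it the prefactor $e^{\alpha(1+1/\delta)c^{2}}$, which blows up through the $1/\delta$ term when $\delta$ is small, could not be controlled uniformly in $u$.
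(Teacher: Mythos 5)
Your proposal is correct, and its core device on the inner ball is the same as the paper's: subtract the boundary value of the radial function, apply Young's inequality with a factor $1+\delta$ chosen so that $\alpha(1+\delta)$ stays below $4\pi(1+\tfrac{2b_0}{4-\mu})$, and invoke the weighted inequalities \eqref{TM-A&S} (for $b_0\leq 0$) or \eqref{TM Calanchi} (for $b_0>0$). Where you genuinely diverge is in the geometry of the splitting and in the auxiliary controls. The paper cuts $\mathbb{R}^2$ at a single large radius $R$: the boundary constant $u(R)$ is made uniformly small via the decay Lemma \ref{lema-radial} (with $R$ large), the inner integral is rescaled from $B_R$ to $B_1$ to fit \eqref{TM Calanchi}, and the exterior is handled by expanding $e^{\alpha u^2}-1$ into its power series and integrating term by term against the decay estimate, which is what makes the exterior bound valid for every $\alpha>0$. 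You instead cut at radii $1$ and $R_0$: the boundary constant $u(1)$ is only shown to be \emph{bounded} (not small) via the trace inequality and Remark \ref{tec0}, which indeed suffices since it enters only through a fixed prefactor $e^{\alpha(1+1/\delta)c^2}$; the middle annulus is dispatched by the elementary radial estimate $|u(r)-u(1)|^2\leq \tfrac{\log r}{2\pi}\|\nabla u\|_2^2$; and on the exterior you linearize the exponential using the smallness of $u$ from Lemma \ref{lema-radial} and compare $Q^{4/(4-\mu)}\lesssim V$ through $\tfrac{4b}{4-\mu}<a$, which is simpler than the series argument (note the enlargement of $R_0$ needed for $e^{\alpha u^2}-1\leq 2\alpha u^2$ depends on $\alpha$, harmless since $\alpha$ is fixed). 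Two small bookkeeping points you should add: when citing \eqref{TM Calanchi} the integrand is $e^{pu^2}-1-pu^2$, so you must reinstate the term $\alpha(1+\delta)\int_{B_1}|x|^{\gamma}\tilde u^2\,\mathrm{d}x$ and bound it (e.g.\ by H\"older and the embedding $H_0^1(B_1)\hookrightarrow L^s(B_1)$, as the paper does in \eqref{eq:dentro da bola1}); and the first (non-uniform) assertion for arbitrary $\alpha$ relies on the individual-function version of the singular/weighted Trudinger--Moser inequalities rather than the stated suprema, a point the paper glosses over in exactly the same way, so it is not a defect relative to the paper.
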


\begin{proof}
Let $R_1>0$ such that Lemma \ref{lema-radial} be valid for $|x|=R_1$. Fixing $R>R_1$, we recall that by the hypothesis $(Q0)$ there exists $C_1>0$ such that
\begin{equation}\label{H-K}
Q(|x|)\leq C_1|x|^b,\ \text{for all}\ |x|\geq R,\ \text{and}\ \ Q(|x|)\leq C_1|x|^{b_0},\ \text{for all}\ |x|\leq R
\end{equation}
and we set
$$
I_1(\alpha,u)=\int_{B_R}Q(|x|)^{\frac{4}{4-\mu}}\left(e^{\alpha u^2} -1\right)\mathrm{d}x,\quad I_2(\alpha,u)=\int_{B_R^c}Q(|x|)^{\frac{4}{4-\mu}}\left(e^{\alpha u^2} -1\right)\mathrm{d}x.
$$
Thus, $$ \int_{\mathbb{R}^2}Q(|x|)^{\frac{4}{4-\mu}} \left(e^{\alpha u^2} -1\right)\mathrm{d}x = I_1( \alpha,u) +I_2( \alpha,u). $$ 
Now, we are going to estimate $ I_1(\alpha,u) $ and $ I_2(\alpha,u). $ First, using \eqref{H-K} and Corollary \ref{tec3} (or \eqref{est1}) with $q=\frac{4-\mu}{2}$, for $ u \in Y_{\mathrm{rad}}$, one has 
\begin{align*}
I_2(\alpha,u)&=\int_{B_R^c}Q(|x|)^{\frac{4}{4-\mu}}\sum_{j=1}^{\infty}\frac{\alpha^ju^{2j}}{j!}\,\mathrm{d}x\\
&=\int_{B_R^c}Q(|x|)^{\frac{4}{4-\mu}}\sum_{j=2}^{\infty}\frac{\alpha^ju^{2j}}{j!}\,\mathrm{d}x+\alpha\int_{B_R^c}Q(|x|)^{\frac{4}{4-\mu}}u^2\,\mathrm{d}x\\
&\leq C_2\int_{B_R^c}|x|^{\frac{4b}{4-\mu}}\sum_{j=2}^{\infty}\frac{\alpha^ju^{2j}}{j!}\,\mathrm{d}x+\alpha C(R)\|u\|^{2}\\
&=C_2\sum_{j=2}^{\infty}\frac{\alpha^j}{j!}\int_{B_R^c}|x|^{\frac{4b}{4-\mu}}u^{2j}\,\mathrm{d}x+\alpha C(R)\|u\|^{2}.
\end{align*}
On the other hand, by virtue of Lemma~\ref{lema-radial} and using the fact that $a>-2$ and $\frac{4b}{4-\mu}-j\frac{a+2}{2}+2\leq \frac{4b}{4-\mu}-a<0$, for all $j\geq 2,$ we get
\begin{align*}
\int_{B_R^c}|x|^{\frac{4b}{4-\mu}}u^{2j}\,\mathrm{d}x&\leq 2\pi\left(C\|u\|\right)^{2j}\int_R^{\infty}s^{\frac{4b}{4-\mu}-j\frac{a+2}{2}+1}\,\mathrm{d}s\\
& = 2\pi\left(C\|u\|\right)^{2j} \frac{R^{\frac{4b}{4-\mu}-j\frac{a+2}{2}+2}}{j \frac{a+2}{2} -\frac{4b}{4-\mu} -2}
\leq\frac{2\pi\left(C\|u\|\right)^{2j}R^{\frac{4b}{4-\mu}-a}}{a-\frac{4b}{4-\mu}},
\end{align*}
and consequently
\begin{align*}
I_2(\alpha,u)&\leq\frac{2\pi C_2R^{\frac{4b}{4-\mu}-a}}{a-\frac{4b}{4-\mu}}\sum_{j=2}^{\infty}\frac{\left(\alpha C^2\|u\|^2\right)^j}{j!}+\alpha C(R)\|u\|^{2}\\
&=C_3 e^{\alpha C^2\|u\|^2}+\alpha C(R)\|u\|^{2},
\end{align*}
where $C_3=C_3(a,b,R,\mu)$.
Hence, $ I_2( \alpha,u) < + \infty,\ \forall\ u \in Y_{\mathrm{rad}}. $ Moreover,
\begin{equation}\label{precise-estimate-outside}
\sup_{u\in Y_{\mathrm{rad}},\ \|u\|\leq1}I_2( \alpha, u) < +\infty,\ \forall\ \alpha > 0.
\end{equation}
Since $u\in H^{1}_{\mathrm{rad}}(B_R)$, we denote $u(x)=u(R)$ for $|x|=R$. Let $v:B_R\to \mathbb{R}$ defined by $v(x)=u(x)-u(R)$. With this we have that $v\in H^{1}_{0,\mathrm{rad}}(B_R)$ and $\|\nabla v\|_2=\|\nabla u\|_2$. Now, we can take $\varepsilon>0$ such that
$$
\alpha(1+\varepsilon) < 4\pi\left(1+\frac{2b_0}{4-\mu}\right).
$$
Using Young's inequality and Lemma \ref{lema-radial}, for $ x \in B_R, $ we have 
$$ 
u^2(x) \leq (1+\varepsilon) v^2(x) + (1+\frac{1}{\varepsilon}) u^2(R) \leq (1+\varepsilon) v^2(x) + (1+\frac{1}{\varepsilon}) C^{2}R^{-\frac{a+2}{2}}. 
$$ 
Furthermore,
we can choose $R>0$ large enough such that $ e^{(1+\varepsilon)C^{2} R^{- \frac{a+2}{2}}} \leq 1$. 
In order to estimate the integral $I_1(\alpha,u)$, we have two cases to analyze:\\

\noindent\textbf{Case~1:} $b_0>0$. In view of \eqref{H-K}, there
exists a positive constant $C_4$ depending on $ R $ such that
\begin{equation}\label{eq:dentro da bola1}
\begin{aligned}
I_1(\alpha,u)&\leq C_4\int_{B_R}|x|^{b_0\frac{4}{4-\mu}}(e^{\alpha u^2}-1)\,\mathrm{d}x\\
&\leq C_4\int_{B_R}|x|^{b_0\frac{4}{4-\mu}}[e^{\alpha(1+\varepsilon)v^2}-1-\alpha(1+\varepsilon) v^2] \mathrm{d}x +C_4\alpha(1+\varepsilon)\int_{B_R}|x|^{b_0\frac{4}{4-\mu}}v^2 \mathrm{d}x\\
&\leq C_4R^{b_0\frac{4}{4-\mu}+2}\int_{B_1}|x|^{b_0\frac{4}{4-\mu}}\left[e^{\alpha(1+\varepsilon)w^2}-1-\alpha(1+\varepsilon) w^2\right]\mathrm{d}x +C_5\alpha(1+\varepsilon)R^{b_0\frac{4}{4-\mu}},
\end{aligned}
\end{equation}
where $w(x)=v(Rx)$, $x\in B_1$, and we also have used a suitable change of variables, the Poincaré inequality and $\|w\|_{L^2(B_1)}=\|v\|_{L^2(B_R)}$. Hence, we can apply the Trudinger-Moser inequality \eqref{TM Calanchi} to deduce that 
$$ 
I_1( \alpha,u) < +\infty,\ \forall\ u \in Y_{\mathrm{rad}},
$$ 
and since $\alpha(1+\varepsilon) < 4\pi(1+2b_0/(4-\mu))$
$$
\sup_{u\in Y_{\mathrm{rad}},\ \|u\|\leq1}I_1( \alpha, u) < +\infty,\ \forall\ 0 < \alpha < 4\pi\left(1+\frac{2b_0}{4-\mu}\right).
$$
\textbf{Case~2:} $-\frac{4-\mu}{2}<b_0\leq0$. Again by \eqref{H-K} and the singular Trudinger-Moser inequality due to Adimurthi and K. Sandeep \eqref{TM-A&S} we have
$$
I_1(\alpha,u)\leq C_6\int_{B_R}\frac{e^{\alpha u^2} -1}{|x|^{-b_0\frac{4}{4-\mu}}}\,\mathrm{d}x\leq C_6\int_{B_R}\frac{e^{\alpha(1+\varepsilon) v^2}}{|x|^{-b_0\frac{4}{4-\mu}}}\,\mathrm{d}x<\infty,\ \forall\ u \in Y_{\mathrm{rad}},
$$
since $\frac{\alpha(1+\varepsilon)}{4\pi}+\frac{-b_0\frac{4}{4-\mu}}{2}< 1 \Leftrightarrow \alpha(1+\varepsilon)< 4\pi(1+\frac{2b_0}{4-\mu})$. Moreover,
\begin{equation}\label{precise-estimate-inside}
\sup_{u\in Y_{\mathrm{rad}},\ \|u\|\leq1}I_1( \alpha, u) < +\infty,\ \forall\ 0 < \alpha \leq 4\pi(1+\frac{2b_0}{4-\mu}).
\end{equation}
Thereby, from \eqref{precise-estimate-outside} and \eqref{precise-estimate-inside} we conclude that
$$
\sup_{u\in Y_{\mathrm{rad}},\ \|u\|\leq1}\int_{\mathbb{R}^2}Q(|x|)^{\frac{4}{4-\mu}}(e^{\alpha u^2}-1)\,\mathrm{d}x<\infty,
$$
and this ends the proof of the theorem.
\end{proof}


\section{The variational setting}
Using assumption $(f_1)$ we have $f(0)=0$ and we may assume, without loss of generality, that $f(s)=0$ for all $s\leq 0$. In what follows, we establish the necessary functional framework where solutions are naturally studied by variational methods. From $(f_1)$ and given $\varepsilon>0$, if $f(s)$ satisfies \eqref{def.cresc.critico}, then for $\alpha>\alpha_0$ and $p\geq1$ there exists $C_1=C_1(\varepsilon,p,\alpha)>0$ such that
\begin{equation}\label{cresc-f-critica}
f(s)\leq\varepsilon|s|^{\frac{2-\mu}{2}}+C_1|s|^{p-1}(e^{\alpha s^2}-1),\quad\forall s\in\mathbb{R}.
\end{equation}
Hence, there exists $C_2=C_2(\varepsilon,p,\alpha)>0$ satisfying
\begin{equation}\label{cresc-F}
F(s)\leq\frac{\varepsilon}{2}s^{\frac{4-\mu}{2}}+C_2|s|^p(e^{\alpha s^2}-1),\quad\forall s\in\mathbb{R}.
\end{equation}
The Euler-Lagrange functional associated to equation~\eqref{P} is given by
$$
J(u):=\dfrac{1}{2}\int_{\mathbb{R}^{2}}|\nabla u|^{2}\,\mathrm{d}x+\dfrac{1}{2}\int_{\mathbb{R}^{2}}V(|x|)u^{2}\,\mathrm{d}x-\dfrac{1}{2}\int_{\mathbb{R}^2}\left[|x|^{-\mu}*(Q(|x|)F(u))\right]Q(|x|)F(u)\,\mathrm{d}x.
$$
In order to control the nonlocal term $\int_{\mathbb{R}^2}\left[|x|^{-\mu}*(Q(|x|)F(u))\right]Q(|x|)F(u)\,\mathrm{d}x$, we need the well known Hardy-Littlewood-Sobolev inequality.
\begin{Lemma}[Hardy-Littlewood-Sobolev inequality]
Let $1<r,t<\infty$ and $0<\mu<N$ with $\frac{1}{t}+\frac{\mu}{N}+\frac{1}{r}=2$. If $g\in L^{t}(\mathbb{R}^{N})$ and $h\in L^{r}(\mathbb{R}^{N})$, then there exists a positive sharp constant $C(t,N,\mu,r)$, independent of $g$ and $h$, such that
$$
\int_{\mathbb{R}^{N}}\int_{\mathbb{R}^{N}}\frac{g(x)h(y)}{|x-y|^{\mu}}\,\mathrm{d}x\,\mathrm{d}y\leq C(t,N,\mu,r)|g|_t|h|_r.
$$
If $N=2$ and $t=r=\frac{4}{4-\mu}$, then
$$
C(t,N,\mu,r)=C(\mu)=\pi^{\frac{\mu}{2}}\frac{\Gamma\left(1-\frac{\mu}{2}\right)}{\Gamma\left(2-\frac{\mu}{2}\right)}\left[\frac{\Gamma(1)}{\Gamma(2)}\right]^{-1+\frac{\mu}{2}}=\frac{2\pi^{\frac{\mu}{2}}}{2-\mu},
$$
where $\Gamma(\cdot)$ is the usual Gamma function.

\begin{Remark}\label{obs}
 We can observe that the Hardy-Littlewood-Sobolev inequality still holds for $N = 2$. so, motivated by this fact, it seems to be interesting, at least from the mathematical point of view, to ask if the existence of nontrivial solution for \eqref{P} still holds for nonlinearities $f$ with exponential subcritical (or critical) growth in $\mathbb{R}^{2}$.
\end{Remark}

\end{Lemma}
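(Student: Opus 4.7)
The plan is to treat this as a classical result and sketch the two standard ingredients: (i) the qualitative bound, and (ii) the sharp constant in the diagonal case. For (i), I would first reduce to nonnegative $g,h$ (replacing by $|g|,|h|$ only strengthens the left side), and observe that the claim is equivalent to the $L^r \to L^{r'}$ boundedness (with $r' = t/(t-1)$) of the Riesz potential operator $I_\mu g(x) := \int |x-y|^{-\mu} g(y)\,\mathrm{d}y$, by duality. Since $|x|^{-\mu}$ lies in the weak-Lebesgue space $L^{N/\mu,\infty}(\mathbb{R}^N)$, Young's inequality for weak $L^p$ spaces (equivalently, Marcinkiewicz interpolation applied to the endpoint estimates $L^1 \to L^{N/(N-\mu),\infty}$ and its dual) yields the existence of a finite constant $C(t,N,\mu,r)$ under the scaling relation $1/t + \mu/N + 1/r = 2$. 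Checking that this scaling relation is exactly what makes the integrals dimensionally consistent is a one-line homogeneity argument.

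For (ii), the sharp constant requires Lieb's rearrangement argument. First, by the Riesz rearrangement inequality applied to the triple $(g, |x|^{-\mu}, h)$, the double integral is maximized (for fixed $L^t, L^r$ norms) when $g$ and $h$ are replaced by their symmetric decreasing rearrangements $g^*, h^*$; since $|x|^{-\mu}$ is already radially decreasing, it is its own rearrangement. Hence we may restrict to radial, nonnegative, decreasing functions. In the diagonal case $t=r$, one then lifts the problem to the sphere $S^N$ via stereographic projection, using the conformal invariance of the bilinear form $\iint g(x)|x-y|^{-\mu} h(y)\,\mathrm{d}x\,\mathrm{d}y$ when $g,h \in L^{2N/(2N-\mu)}$; this allows an explicit identification of the extremizers as conformal images of constants, i.e. $g(x) = h(x) = c(1+|x|^2)^{-(2N-\mu)/2}$ up to translation/dilation.

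For the closed form in the case $N=2$, $t=r=4/(4-\mu)$, I would evaluate the bilinear form on the extremizer $u(x) = (1+|x|^2)^{-(4-\mu)/2}$. By the conformal invariance, this reduces to computing $\iint_{S^2\times S^2} |\xi-\eta|^{-\mu}\,\mathrm{d}\sigma(\xi)\,\mathrm{d}\sigma(\eta)$, which factors through a one-dimensional integral in the angle and evaluates via the Beta function to a product of Gamma functions. Combining with the $L^{4/(4-\mu)}$-norm of $u$ (another Beta integral) yields the stated formula $C(\mu) = \pi^{\mu/2}\Gamma(1-\mu/2)/\Gamma(2-\mu/2)$, and the final simplification $=2\pi^{\mu/2}/(2-\mu)$ uses the functional equation $\Gamma(2-\mu/2)=(1-\mu/2)\Gamma(1-\mu/2)$.

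The main obstacle is the identification of extremizers, which is a genuinely non-trivial step. Since this statement is invoked only as a black-box tool (the paper's original contributions concern the variational problem, not HLS itself), in practice I would not reprove it; I would cite Lieb's original paper on sharp constants and Lieb--Loss's \emph{Analysis} for the complete argument, and merely record the two displayed formulas for later use in controlling the convolution term $[|x|^{-\mu} * (Q(|x|)F(u))]Q(|x|)F(u)$ appearing in the functional $J$.
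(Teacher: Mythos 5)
Your approach matches the paper's: the paper states the Hardy--Littlewood--Sobolev inequality purely as a classical black-box tool, with no proof given, exactly as you propose to do by citing Lieb's sharp-constant paper and Lieb--Loss. Your sketch of the standard argument (weak-type Young/Marcinkiewicz for the qualitative bound, Riesz rearrangement plus conformal invariance for the sharp diagonal constant, and the simplification $\Gamma(2-\mu/2)=(1-\mu/2)\Gamma(1-\mu/2)$ giving $2\pi^{\mu/2}/(2-\mu)$ for $N=2$, $t=r=\frac{4}{4-\mu}$) is accurate and consistent with the constant recorded in the paper.
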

Next, we shall verify that $J$ is well defined on the space $Y_{\mathrm{rad}}$. Notice that by the Hardy-Littlewood-Sobolev inequality with $t=r=\frac{4}{4-\mu}$, we have
\begin{align*}
\left|\int_{\mathbb{R}^2}\left[|x|^{-\mu}*(Q(|x|)F(u))\right]Q(|x|)F(u)\,\mathrm{d}x\right|&\leq C|Q(|x|)F(u)|^{2}_{\frac{4}{4-\mu}}\\
&=C\left(\int_{\mathbb{R}^2}Q(|x|)^{\frac{4}{4-\mu}}|F(u)|^{\frac{4}{4-\mu}}\,\mathrm{d}x\right)^{\frac{4-\mu}{2}}.
\end{align*}
By \eqref{cresc-F} we have
$$
\int_{\mathbb{R}^2}Q(|x|)^{\frac{4}{4-\mu}}|F(u)|^{\frac{4}{4-\mu}}\,\mathrm{d}x\leq C_3\int_{\mathbb{R}^2}Q(|x|)^{\frac{4}{4-\mu}}u^2\,\mathrm{d}x+C_4\int_{\mathbb{R}^2}Q(|x|)^{\frac{4}{4-\mu}}|u|^p(e^{\alpha u^2}-1)\,\mathrm{d}x.
$$
From Lemma \ref{tec1} with $q=\frac{4}{4-\mu}$, we infer that the integral $\int_{\mathbb{R}^2}Q(|x|)^{\frac{4}{4-\mu}}u^2\,\mathrm{d}x$ is finite. Hence, $Q(|x|)|u|^{\frac{4-\mu}{2}}\in L^{\frac{4}{4-\mu}}(\mathbb{R}^{2})$. The second integral can be estimated as follow. By Hölder's inequality, we have
$$
\int_{\mathbb{R}^2}Q(|x|)^{\frac{4}{4-\mu}}|u|^p(e^{\alpha u^2}-1)\,\mathrm{d}x
\leq\left(\int_{\mathbb{R}^2}Q(|x|)^{\frac{4}{4-\mu}}|u|^{2p}\,\mathrm{d}x\right)^{\frac{1}{2}}\left(\int_{\mathbb{R}^2}Q(|x|)^{\frac{4}{4-\mu}}(e^{2\alpha u^2}-1)\,\mathrm{d}x\right)^{\frac{1}{2}}<\infty,
$$
since Lemma \ref{tec1} and Proposition \ref{TM} hold. Consequently, $Q(|x|)F(u)\in L^{\frac{4}{4-\mu}}(\mathbb{R}^{2})$. Thus, $J$ is well defined on $Y_{\mathrm{rad}}$ and using standard arguments we can show that $J\in C^1(Y_{\mathrm{rad}},\mathbb{R})$ with derivative given by
$$
J'(u)\phi=\int_{\mathbb{R}^2}(\nabla u\cdot\nabla\phi+V(|x|)u\phi)\,\mathrm{d}x-\int_{\mathbb{R}^2}\left[|x|^{-\mu}*(Q(|x|)F(u))\right]Q(|x|)f(u)\phi\,\mathrm{d}x.
$$
In the following, we will need the next lemma.
\begin{Lemma}\label{tec4}
Let $\alpha>0$ and $0<\rho<\sqrt{\frac{2\pi}{\alpha}(1+\frac{2b_0}{4-\mu})}$. If $p\geq 1$ and $\|u\|\leq\rho$, then there exists $C>0$ such that 
$$
\int_{\mathbb{R}^2}Q(|x|)^{\frac{4}{4-\mu}}|u|^p(e^{\alpha u^2}-1)\,\mathrm{d}x\leq C\|u\|^{p}.
$$
\end{Lemma}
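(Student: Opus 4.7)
The plan is to split the integrand by Cauchy--Schwarz into a pure polynomial weighted $L^{2p}$ piece and a pure exponential piece, and then control each piece separately: the polynomial piece by Corollary~\ref{tec3}, and the exponential piece by the Trudinger--Moser inequality of Proposition~\ref{TM}. The whole point of the bound $\rho<\sqrt{(2\pi/\alpha)(1+2b_0/(4-\mu))}$ is to guarantee that after using Cauchy--Schwarz (which doubles the exponent to $2\alpha u^2$) and rescaling $u$ to a unit-norm element, we still fall in the subcritical regime where Proposition~\ref{TM} applies.

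Concretely, I would first write
\[
\int_{\mathbb{R}^2}Q(|x|)^{\frac{4}{4-\mu}}|u|^p(e^{\alpha u^2}-1)\,\mathrm{d}x\leq \left(\int_{\mathbb{R}^2}Q(|x|)^{\frac{4}{4-\mu}}|u|^{2p}\,\mathrm{d}x\right)^{1/2}\!\!\left(\int_{\mathbb{R}^2}Q(|x|)^{\frac{4}{4-\mu}}(e^{\alpha u^2}-1)^2\,\mathrm{d}x\right)^{1/2},
\]
and then use the elementary inequality $(e^{t}-1)^2\leq e^{2t}-1$ for $t\geq 0$ (which follows from $e^{2t}-1-(e^{t}-1)^2=2(e^{t}-1)\geq 0$) to replace $(e^{\alpha u^2}-1)^2$ by $e^{2\alpha u^2}-1$ in the second factor.

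For the polynomial factor, since $p\geq 1$ we have $q:=p(4-\mu)/2\geq (4-\mu)/2$, so $4q/(4-\mu)=2p$ and Corollary~\ref{tec3} gives directly
\[
\int_{\mathbb{R}^2}Q(|x|)^{\frac{4}{4-\mu}}|u|^{2p}\,\mathrm{d}x\leq C_1\|u\|^{2p}.
\]
For the exponential factor, assume $u\neq 0$ (otherwise the conclusion is trivial) and set $v:=u/\|u\|$, so $\|v\|=1$. Then $2\alpha u^2=2\alpha\|u\|^2 v^2\leq 2\alpha\rho^2\,v^2$, and the hypothesis on $\rho$ gives $\beta:=2\alpha\rho^2< 4\pi\bigl(1+\tfrac{2b_0}{4-\mu}\bigr)$. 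Since $t\mapsto e^{t}-1$ is increasing on $[0,\infty)$, we obtain
\[
\int_{\mathbb{R}^2}Q(|x|)^{\frac{4}{4-\mu}}(e^{2\alpha u^2}-1)\,\mathrm{d}x\leq \int_{\mathbb{R}^2}Q(|x|)^{\frac{4}{4-\mu}}(e^{\beta v^2}-1)\,\mathrm{d}x\leq C_2,
\]
where the last inequality is exactly the content of Proposition~\ref{TM} applied to $v\in Y_{\mathrm{rad}}$ with $\|v\|\leq 1$ and the subcritical exponent $\beta$. Combining the two estimates yields the desired bound with $C=\sqrt{C_1 C_2}$.

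There is no real obstacle here; the only subtle point is bookkeeping the exponents so that the Cauchy--Schwarz split is compatible with both Corollary~\ref{tec3} (which forces us to pick up $|u|^{2p}$ with weight $Q^{4/(4-\mu)}$ and therefore needs $p\geq 1$) and Proposition~\ref{TM} (whose critical threshold $4\pi(1+2b_0/(4-\mu))$ must be respected after the exponent has been doubled to $2\alpha$). The chosen range of $\rho$ is precisely designed so that $2\alpha\rho^2$ still lies strictly below this threshold.
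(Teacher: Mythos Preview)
Your proof is correct and follows exactly the same approach as the paper: Cauchy--Schwarz (H\"older with exponent $2$) in the weighted measure $Q^{4/(4-\mu)}\,\mathrm{d}x$, then Corollary~\ref{tec3} for the polynomial factor and Proposition~\ref{TM} for the exponential factor after rescaling to unit norm. The only difference is cosmetic: you make the intermediate step $(e^{\alpha u^2}-1)^2\leq e^{2\alpha u^2}-1$ explicit, whereas the paper writes the inequality with $e^{2\alpha u^2}-1$ directly in the second factor.
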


\begin{proof}
From Hölder's inequality, Corollary \ref{tec3} and Proposition \ref{TM}, we have
\begin{align*}
\int_{\mathbb{R}^2}Q(|x|)^{\frac{4}{4-\mu}}|u|^p(e^{\alpha u^2}-1)\,\mathrm{d}x
&\leq\left(\int_{\mathbb{R}^2}Q(|x|)^{\frac{4}{4-\mu}}|u|^{2p}\,\mathrm{d}x\right)^{\frac{1}{2}}\left(\int_{\mathbb{R}^2}Q(|x|)^{\frac{4}{4-\mu}}(e^{2\alpha u^2}-1)\,\mathrm{d}x\right)^{\frac{1}{2}}\\
&\leq C_1\|u\|^{p}\left(\int_{\mathbb{R}^2}Q(|x|)^{\frac{4}{4-\mu}}(e^{2\alpha\|u\|^{2}(u/\|u\|)^2}-1)\,\mathrm{d}x\right)^{\frac{1}{2}}\\
&\leq C\|u\|^{p},
\end{align*}
since that $2\alpha\|u\|^{2}\leq 2\alpha\rho^{2}<4\pi(1+\frac{2b_0}{4-\mu})$.
\end{proof}

Next, we shall justify that a critical point of the functional $J$ is exactly a weak solution of the problem \eqref{P}. For this we will need the following result:
\begin{Proposition}\label{tec5}
Assume that $(V,Q)\in \mathcal{K}$, \eqref{def.cresc.critico} and $(f_1)$ hold. Then, for each $u\in Y_{\mathrm{rad}}$, there exists $C=C(b_0,\|u\|)>0$ such that
$$
\left|\int_{\mathbb{R}^2}\left[|x|^{-\mu}*(Q(|x|)F(u))\right]Q(|x|)f(u)w\,\mathrm{d}x\right|\leq C\|w\|,\ \text{for all}\ w\in Y.
$$
\end{Proposition}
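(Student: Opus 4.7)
The plan is to exploit radial symmetry to reduce the arbitrary test function $w\in Y$ to a radial one, then combine the Hardy--Littlewood--Sobolev inequality with Corollary \ref{tec3} and Proposition \ref{TM}.

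\textbf{Symmetrization.} Since $u\in Y_{\mathrm{rad}}$ and the Riesz kernel is radial, the function
$$
\Psi(x):=Q(|x|)f(u(x))\bigl[|x|^{-\mu}*(QF(u))\bigr](x)
$$
is radial. For an arbitrary $w\in Y$, let $w^{\#}(x):=\frac{1}{2\pi}\int_{0}^{2\pi}w(|x|\cos\theta,|x|\sin\theta)\,d\theta$ denote its spherical average. A standard Jensen-inequality computation on $|\nabla w|^{2}$ and on $V^{1/2}w$ shows that $w^{\#}\in Y_{\mathrm{rad}}$ with $\|w^{\#}\|\leq\|w\|$, and by Fubini $\int\Psi\,w\,dx=\int\Psi\,w^{\#}\,dx$.

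\textbf{HLS and H\"older.} The Hardy--Littlewood--Sobolev inequality with $t=r=4/(4-\mu)$ gives
$$
\left|\int\Psi\,w^{\#}\,dx\right|\leq C(\mu)\,\|QF(u)\|_{\frac{4}{4-\mu}}\,\|Qf(u)w^{\#}\|_{\frac{4}{4-\mu}}.
$$
I then estimate the second norm by H\"older with conjugate exponents $(4-\mu)/(2-\mu)$ and $(4-\mu)/2$, splitting $Q^{4/(4-\mu)}$ symmetrically between the two factors:
$$
\int Q^{\frac{4}{4-\mu}}f(u)^{\frac{4}{4-\mu}}(w^{\#})^{\frac{4}{4-\mu}}\,dx\leq\Bigl(\int Q^{\frac{4}{4-\mu}}f(u)^{\frac{4}{2-\mu}}\,dx\Bigr)^{\!\frac{2-\mu}{4-\mu}}\Bigl(\int Q^{\frac{4}{4-\mu}}(w^{\#})^{2}\,dx\Bigr)^{\!\frac{2}{4-\mu}}.
$$
By Corollary \ref{tec3} with $q=(4-\mu)/2$, the second factor is at most $C\|w^{\#}\|^{4/(4-\mu)}\leq C\|w\|^{4/(4-\mu)}$. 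Each of the remaining $u$-quantities, $\|QF(u)\|_{4/(4-\mu)}$ and $\int Q^{4/(4-\mu)}f(u)^{4/(2-\mu)}\,dx$, is controlled by a constant depending on $\|u\|$ using the same recipe: insert \eqref{cresc-F} (resp.\ \eqref{cresc-f-critica}), use the elementary bound $(e^{s}-1)^{m}\leq e^{ms}-1$ for $m\geq 1$, apply Cauchy--Schwarz, and control the polynomial parts by Corollary \ref{tec3} and the exponential parts by Proposition \ref{TM} (which for fixed $u\in Y_{\mathrm{rad}}$ gives a finite value for every $\alpha>0$). Raising the H\"older estimate to the power $(4-\mu)/4$ and combining with the HLS step yields $|\int\Psi w\,dx|\leq C(b_{0},\|u\|)\|w\|$.

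\textbf{Main obstacle.} The symmetrization step is indispensable: Corollary \ref{tec3} relies on the Strauss-type decay of Lemma \ref{lema-radial} and therefore cannot be applied to the non-radial $w$; every direct alternative (e.g.\ a Cauchy--Schwarz splitting against $V^{1/2}w$) forces a factor $V^{-1}$ into the $u$-integrals, and a parameter count shows that under the standing hypotheses $a>-2$ and $b<a(4-\mu)/4$ the resulting integrals need not converge. The H\"older exponents $(4-\mu)/(2-\mu)$ and $(4-\mu)/2$ are chosen precisely so that the $w^{\#}$-factor lands at the borderline power $q=(4-\mu)/2$ where Corollary \ref{tec3} is sharp.
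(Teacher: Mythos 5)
Your proof is essentially correct, but it follows a genuinely different route from the paper. You first replace the arbitrary $w\in Y$ by its spherical average $w^{\#}$, using that the whole factor $\Psi=Q(|x|)f(u)\bigl[|x|^{-\mu}*(QF(u))\bigr]$ is radial (true, since $u$, $Q$ and the Riesz kernel are radial) and that $\|w^{\#}\|\leq\|w\|$ by Jensen; after that, everything lives in $Y_{\mathrm{rad}}$ and you may invoke Corollary \ref{tec3} with the borderline exponent $q=\frac{4-\mu}{2}$ together with Proposition \ref{TM}, exactly as in the radial estimates of Section 3. The paper instead never symmetrizes: it keeps the non-radial $w$, splits $\mathbb{R}^2$ into $B_R$ and $B_R^{c}$, and handles the $w$-factor by two devices that require no radial symmetry of $w$ -- outside $B_R$ it dominates $Q(|x|)^{\frac{4}{4-\mu}}\leq C|x|^{\frac{4b}{4-\mu}}\leq C|x|^{a}\leq CV(|x|)$ (using $b<\frac{a(4-\mu)}{4}$) and splits $V=V^{\frac{2-\mu}{4-\mu}}V^{\frac{2}{4-\mu}}$ in H\"older so that only $\int V u^{2}$ and $\int V w^{2}\leq\|w\|^{2}$ appear, while inside $B_R$ it uses a cut-off $\varphi$ and the embedding $H_0^{1}(B_{2R})\hookrightarrow L^{\frac{2p}{p-1}}(B_{2R})$ with $pb_0>-\frac{4-\mu}{2}$. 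Consequently your closing claim that symmetrization is ``indispensable'' and that every direct alternative fails is not accurate: no $V^{-1}$ ever enters the paper's $u$-integrals, precisely because $Q^{\frac{4}{4-\mu}}$ is itself dominated by $V$ at infinity. Two small points you should make explicit: justify the identity $\int\Psi\,w\,\mathrm{d}x=\int\Psi\,w^{\#}\,\mathrm{d}x$ by first running the estimate with $|\Psi|$ and $(|w|)^{\#}$ (Tonelli), since the absolute integrability is not known beforehand; and note that, as in the paper, the ``constant $C(b_0,\|u\|)$'' coming from Proposition \ref{TM} at arbitrary $\alpha>0$ really depends on $u$ itself, not only on $\|u\|$, which is harmless for the way the proposition is used. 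What your approach buys is a shorter, more conceptual reduction to the radial calculus already developed; what the paper's approach buys is an argument that works verbatim for all $w\in Y$ without any averaging lemma and makes transparent where each hypothesis on $a$, $b$, $b_0$ enters.
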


\begin{proof}
Let $w\in Y$. By the Hardy-Littlewood-Sobolev inequality with $t=r=\frac{4}{4-\mu}$, we have
$$
\left|\int_{\mathbb{R}^2}\left[|x|^{-\mu}*(Q(|x|)F(u))\right]Q(|x|)f(u)w\,\mathrm{d}x\right|\leq C_1|Q(|x|)F(u)|_{\frac{4}{4-\mu}}|Q(|x|)f(u)w|_{\frac{4}{4-\mu}}.
$$
Now, let us to estimate the term $|Q(|x|)f(u)w|_{\frac{4}{4-\mu}}$. From \eqref{cresc-f-critica} with $p=1$, we get
\begin{align*}
\int_{\mathbb{R}^2}&Q(|x|)^{\frac{4}{4-\mu}}|f(u)w|^{\frac{4}{4-\mu}}\,\mathrm{d}x\\
&\leq C_2\underbrace{\int_{\mathbb{R}^2}Q(|x|)^{\frac{4}{4-\mu}}u^{\frac{2(2-\mu)}{4-\mu}}|w|^{\frac{4}{4-\mu}}\,\mathrm{d}x}_{I_1}+C_3\underbrace{\int_{\mathbb{R}^2}Q(|x|)^{\frac{4}{4-\mu}}(e^{\alpha u^2}-1)^{\frac{4}{4-\mu}}|w|^{\frac{4}{4-\mu}}\,\mathrm{d}x}_{I_2}.
\end{align*}
Let us to analyze the integrals $I_1$ and $I_2$. Fixing $R>0$, we have
$$
I_1=\underbrace{\int_{B_R}Q(|x|)^{\frac{4}{4-\mu}}u^{\frac{2(2-\mu)}{4-\mu}}|w|^{\frac{4}{4-\mu}}\,\mathrm{d}x}_{I_1^{1}}+\underbrace{\int_{B_R^{c}}Q(|x|)^{\frac{4}{4-\mu}}u^{\frac{2(2-\mu)}{4-\mu}}|w|^{\frac{4}{4-\mu}}\,\mathrm{d}x}_{I_1^{2}}.
$$
Using the assumptions $(V0),\ (Q0)$ and  Hölder's inequality, one has
\begin{align}\label{est-I12}
I_1^{2}&\leq C \int_{B_R^{c}}|x|^{\frac{4b}{4-\mu}}u^{\frac{2(2-\mu)}{4-\mu}}|w|^{\frac{4}{4-\mu}}\,\mathrm{d}x\leq C \int_{B_R^{c}}|x|^a u^{\frac{2(2-\mu)}{4-\mu}}|w|^{\frac{4}{4-\mu}}\,\mathrm{d}x\nonumber\\
&\leq C_4 \int_{B_R^{c}}V(|x|) u^{\frac{2(2-\mu)}{4-\mu}}|w|^{\frac{4}{4-\mu}}\,\mathrm{d}x\nonumber\\
&\leq C_4 \left(\int_{B_R^{c}}V(|x|) u^2\,\mathrm{d}x\right)^{\frac{2-\mu}{4-\mu}} \left(\int_{B_R^{c}}V(|x|) w^2\,\mathrm{d}x\right)^{\frac{2}{4-\mu}}\nonumber\\
&\leq C_4 \left(\int_{B_R^{c}}V(|x|) u^2\,\mathrm{d}x\right)^{\frac{2-\mu}{4-\mu}}\|w\|^{\frac{4}{4-\mu}}.
\end{align}
Now, let $p>1$ sufficiently close to $1$ such that $pb_0>-\frac{4-\mu}{2}$. Thus, using again Hölder's inequality, Corollary \ref{tec3} with $q=\frac{4-\mu}{2}$ and the hypothesis $(Q0)$, we get the following estimate to $I_1^{1}$.
\begin{align*}
I_1^{1}&\leq \left(\int_{B_R}Q(|x|)^{\frac{4}{4-\mu}} u^2\,\mathrm{d}x\right)^{\frac{2-\mu}{4-\mu}} \left(\int_{B_R}Q(|x|)^{\frac{4}{4-\mu}} w^2\,\mathrm{d}x\right)^{\frac{2}{4-\mu}}\\
&\leq C(\|u\|)\left[ \left(\int_{B_R}|x|^{\frac{4b_0p}{4-\mu}}\,\mathrm{d}x\right)^{\frac{1}{p}}\left(\int_{B_R}|w|^{\frac{2p}{p-1}}\,\mathrm{d}x\right)^{\frac{p-1}{p}}\right]^{\frac{2}{4-\mu}}\\
&\leq C(\|u\|,R)\left[\left(\int_{B_{2R}}|\varphi w|^{\frac{2p}{p-1}}\,\mathrm{d}x\right)^{\frac{p-1}{p}}\right]^{\frac{2}{4-\mu}},
\end{align*}
where $\varphi$ is a radial cut-off function in $C_{0}^{\infty}(\mathbb{R}^{2})$ verifying
$$
0\leq \varphi\leq 1\ \text{in}\ \mathbb{R}^{2},\ \varphi\equiv 1\ \text{in}\ B_{R},\ \varphi\equiv 0\ \text{in}\ B_{2R}^{c}\ \ \text{and}\ \ |\nabla\varphi|\leq C\ \text{in}\ \mathbb{R}^{2}.
$$ 
Hence, $\varphi w\in H_0^{1}(B_{2R})$. Denoting by $m_R:=\min_{x\in B_{2R}\setminus B_R}V(|x|)>0$, it follows from the last inequality and the embedding $H_0^{1}(B_{2R})\hookrightarrow L^{\frac{2p}{p-1}}(B_{2R})$ that
\begin{align}\label{est-I11}
I_1^{1}&\leq C_5\left(\int_{B_{2R}}|\nabla(\varphi w)|^2\,\mathrm{d}x\right)^{\frac{2}{4-\mu}}\leq C_5\left(\int_{B_{2R}\setminus B_R}|w\nabla\varphi|^2\,\mathrm{d}x+C_6\int_{B_{2R}}|\nabla w|^2\,\mathrm{d}x\right)^{\frac{2}{4-\mu}}\nonumber\\
&\leq C_5\left(\frac{1}{m_R}\int_{B_{2R}\setminus B_R}V(|x|)w^2\,\mathrm{d}x+C_6\int_{\mathbb{R}^{2}}|\nabla w|^2\,\mathrm{d}x\right)^{\frac{2}{4-\mu}}\leq C_7\|w\|^{\frac{4}{4-\mu}}.
\end{align}
Thereby, from \eqref{est-I11} and \eqref{est-I12}, we get the following estimate for the integral $I_1$:
\begin{equation}\label{est-I1}
I_1=I_1^{1}+I_1^{2}\leq C(\|u\|,R)\|w\|^{\frac{4}{4-\mu}}.
\end{equation}
Next, we shall analyze $I_2$. Fixed again $R>0$, by Hölder's inequality and Proposition \ref{TM} we get
\begin{align*}
I_2&\leq \int_{B_R}Q(|x|)^{\frac{4}{4-\mu}}(e^{\frac{4\alpha}{4-\mu} u^2}-1)|w|^{\frac{4}{4-\mu}}\,\mathrm{d}x+\int_{B_R^{c}}Q(|x|)^{\frac{4}{4-\mu}}(e^{\frac{4\alpha}{4-\mu} u^2}-1)|w|^{\frac{4}{4-\mu}}\,\mathrm{d}x\\
&\leq \left(\int_{B_R}Q(|x|)^{\frac{4}{4-\mu}}(e^{\frac{4\alpha}{2-\mu} u^2}-1)\,\mathrm{d}x\right)^{\frac{2-\mu}{4-\mu}}\left(\int_{B_R}Q(|x|)^{\frac{4}{4-\mu}}w^2\,\mathrm{d}x\right)^{\frac{2}{4-\mu}}\\
&\ \ \ \ +\left(\int_{B_R^{c}}Q(|x|)^{\frac{4}{4-\mu}}(e^{\frac{4\alpha}{2-\mu} u^2}-1)\,\mathrm{d}x\right)^{\frac{2-\mu}{4-\mu}}\left(\int_{B_R^{c}}Q(|x|)^{\frac{4}{4-\mu}}w^2\,\mathrm{d}x\right)^{\frac{2}{4-\mu}}\\
&\leq C_8\left[\left(\int_{B_R}Q(|x|)^{\frac{4}{4-\mu}}w^2\,\mathrm{d}x\right)^{\frac{2}{4-\mu}}+\left(\int_{B^{c}_R}Q(|x|)^{\frac{4}{4-\mu}}w^2\,\mathrm{d}x\right)^{\frac{2}{4-\mu}}\right].
\end{align*}
Similar to the estimate of $I_1^{1}$, there exists $C_9>0$ such that
$$
\left(\int_{B_R}Q(|x|)^{\frac{4}{4-\mu}}w^2\,\mathrm{d}x\right)^{\frac{2}{4-\mu}}\leq C_9\|w\|^{\frac{4}{4-\mu}}.
$$
On the other hand, by assumptions $(V0)$ and $(Q0)$ we reach
\begin{align*}
\int_{B^{c}_R}Q(|x|)^{\frac{4}{4-\mu}}w^2\,\mathrm{d}x\leq C\int_{B^{c}_R}|x|^{\frac{4b}{4-\mu}}w^2\,\mathrm{d}x\leq C\int_{B^{c}_R}|x|^a w^2\,\mathrm{d}x\leq C\int_{B^{c}_R}V(|x|)w^2\,\mathrm{d}x.
\end{align*}
Consequently, we obtain the following estimate to $I_2$:
\begin{equation}\label{est-I2}
I_2\leq C_9\|w\|^{\frac{4}{4-\mu}}+C\left(\int_{B^{c}_R}V(|x|)w^2\,\mathrm{d}x\right)^{\frac{2}{4-\mu}}\leq C\|w\|^{\frac{4}{4-\mu}}+C\|w\|^{\frac{4}{4-\mu}}= C\|w\|^{\frac{4}{4-\mu}}.
\end{equation}
Therefore, from \eqref{est-I1} and \eqref{est-I2} we deduce that
$$
\int_{\mathbb{R}^2}Q(|x|)^{\frac{4}{4-\mu}}|f(u)w|^{\frac{4}{4-\mu}}\,\mathrm{d}x\leq C_2I_1+C_3I_2\leq C_1(\|u\|,R)\|w\|^{\frac{4}{4-\mu}}+C_2(\|u\|,R)\|w\|^{\frac{4}{4-\mu}},
$$
and so
$$
\left(\int_{\mathbb{R}^2}Q(|x|)^{\frac{4}{4-\mu}}|f(u)w|^{\frac{4}{4-\mu}}\,\mathrm{d}x\right)^{\frac{4-\mu}{4}}\leq C(\|u\|,R)\|w\|,
$$
and the proof of the proposition is complete.
\end{proof}

The next proposition shows that $Y_{\mathrm{rad}}$ actually is, in some sense, a natural constraint for finding weak solutions of problem \eqref{P}.
\begin{Proposition}
If $u\in Y_{\mathrm{rad}}$ is a critical point of $J$, then $u$ is a weak solution of problem \eqref{P}, that is, it holds the equality \eqref{weak-sol-def}.
\end{Proposition}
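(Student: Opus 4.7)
The plan is to combine the continuity estimate from Proposition \ref{tec5} with a spherical-averaging argument (essentially a concrete incarnation of the Palais Principle of Symmetric Criticality adapted to the present setting) in order to transfer the critical-point identity from radial test functions to arbitrary $\phi\in C_0^\infty(\mathbb{R}^2)$.

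First, for a given $u\in Y_{\mathrm{rad}}$ that is a critical point of $J$, I would observe that the linear form
\[
L_u(w):=\int_{\mathbb{R}^2}(\nabla u\cdot\nabla w+V(|x|)uw)\,\mathrm{d}x-\int_{\mathbb{R}^2}\left[|x|^{-\mu}*(Q(|x|)F(u))\right]Q(|x|)f(u)w\,\mathrm{d}x
\]
is well defined and continuous on the whole space $Y$: the quadratic part is controlled by $\|u\|\,\|w\|$ via Cauchy--Schwarz and the definition of $\|\cdot\|$, while the nonlocal term satisfies $|L_u(w)-\langle u,w\rangle|\leq C(b_0,\|u\|)\|w\|$ by Proposition \ref{tec5}. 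Since $u$ is a critical point of $J$ on $Y_{\mathrm{rad}}$, the functional $L_u$ vanishes identically on $Y_{\mathrm{rad}}$.

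Next, fix $\phi\in C_0^\infty(\mathbb{R}^2)$. Because $V\in C(0,\infty)$ with $\limsup_{r\to 0^+}V(r)/r^{a_0}<\infty$ and $a_0>-2$, the weight $V(|\cdot|)$ is locally integrable on $\mathbb{R}^2$, so $\phi\in Y$. Define the spherical average
\[
\bar\phi(x):=\frac{1}{2\pi}\int_0^{2\pi}\phi(R_\theta x)\,\mathrm{d}\theta,
\]
where $R_\theta$ denotes the planar rotation of angle $\theta$. Then $\bar\phi$ is smooth, radial, and compactly supported, hence $\bar\phi\in Y_{\mathrm{rad}}$ and consequently $L_u(\bar\phi)=0$. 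The heart of the proof is the identity $L_u(\phi)=L_u(\bar\phi)$. For the potential and nonlocal pieces this is straightforward: $V(|x|)u(x)$ is radial, and likewise $Q(|\cdot|)F(u)$ is radial, so its convolution with the rotation-invariant kernel $|x|^{-\mu}$ is radial, and multiplication by the radial factor $Q(|\cdot|)f(u)$ keeps it radial; consequently Fubini together with the change of variables $x\mapsto R_\theta x$ gives $\int g\phi\,dx=\int g\bar\phi\,dx$ for each of these radial integrands $g$. For the Dirichlet term one performs the same rotation change of variables and uses that $u$ radial implies $\nabla u(R_\theta^T y)=R_\theta^T\nabla u(y)$ a.e., so that $\nabla u(x)\cdot R_\theta^T\nabla\phi(R_\theta x)$ becomes $\nabla u(y)\cdot\nabla\phi(y)$ after the substitution (using orthogonality of $R_\theta$), and the average in $\theta$ leaves the integral intact.

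The main obstacle I anticipate is the careful justification of the identity $L_u(\phi)=L_u(\bar\phi)$ for the Dirichlet piece, as it requires exploiting the rotation covariance of $\nabla u$ for a merely weakly differentiable radial $u$, together with an application of Fubini that is licit only because the integrand is dominated by an $L^1$ function on $\mathbb{R}^2\times[0,2\pi]$ (use $\nabla u\in L^2$, $\nabla\phi\in L^\infty$ with compact support). Once this identity is secured, the chain $L_u(\phi)=L_u(\bar\phi)=0$ is exactly \eqref{weak-sol-def} for every $\phi\in C_0^\infty(\mathbb{R}^2)$, which is the desired conclusion.
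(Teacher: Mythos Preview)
Your proposal is correct and shares the same starting point as the paper---namely, using Proposition~\ref{tec5} to extend the linear form $L_u$ continuously to all of $Y$ and recording that it vanishes on $Y_{\mathrm{rad}}$---but the two arguments then diverge. The paper invokes the Riesz Representation Theorem in the Hilbert space $Y$: it produces the unique $\hat u\in Y$ with $T_u(w)=\langle\hat u,w\rangle$, observes that $T_u(gw)=T_u(w)$ and $\|gw\|=\|w\|$ for every $g\in O(2)$, and concludes by uniqueness that $\hat u$ is radial, whence $T_u(\hat u)=0$ forces $T_u\equiv 0$ on $Y$. Your route is the concrete averaging/symmetrization argument: you replace a given $\phi\in C_0^\infty(\mathbb{R}^2)$ by its rotational mean $\bar\phi$, verify term by term that $L_u(\phi)=L_u(\bar\phi)$ (this is where you need the rotation covariance $\nabla u(R_\theta x)=R_\theta\nabla u(x)$ for radial $u$ and a Fubini justification), and then use $\bar\phi\in Y_{\mathrm{rad}}$. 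The paper's approach is slightly slicker in that it handles all of $Y$ at once without dissecting the three pieces of $L_u$ or worrying about differentiation under the integral in $\theta$; your approach is more elementary in that it avoids the abstract Hilbert-space detour and makes the role of rotation invariance entirely explicit. Both are standard realizations of the Palais principle, and both are complete here.
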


\begin{proof}
Firstly, due to Proposition \ref{tec5}, the linear functional $T_u:Y\rightarrow\mathbb R$ defined by
$$
T_u (w) := \int_{\mathbb{R}^2}\nabla u\cdot\nabla w\,\mathrm{d}x+\int_{\mathbb{R}^2}V(|x|)u w\,\mathrm{d}x-\int_{\mathbb{R}^2}\left[|x|^{-\mu}*(Q(|x|)F(u))\right]Q(|x|)f(u)w\,\mathrm{d}x
$$
is well defined and continuous on $Y$, since
$$
\left|T_u (w)\right|\leq \|u\|\|w\|+C\|w\|=(\|u\|+C)\|w\|,\ \text{for all}\ w\in Y.
$$
Now, taking into account that $u\in Y_{\mathrm{rad}}$ is a critical point of $J$, we infer that $T_u (w)=0,\ \text{for all}\ w\in Y_{\mathrm{rad}}$. So, by the Riesz Representation Theorem in the space $Y$ with the inner product \eqref{inner-def}, there exists a unique $\hat{u}\in Y$ such that $T_u(\hat{u})=\|\hat{u}\|^{2}=\|T_u\|_{Y'}$, where $Y'$ denotes the dual space of $Y$. Let $O(2)$ denote the group of orthogonal transformations in $\mathbb{R}^{2}$. Then, by using change of variables, one has for each $w\in Y$
$$
T_u(gw)=T_u(w)\quad\text{and}\quad \|gw\|=\|w\|,\ \ \text{for all}\ g\in O(2),
$$
whence, applying with $w=\hat{u}$, one deduces, by uniqueness, $g\hat{u}=\hat{u}$, for all $g\in O(2)$, which means that $\hat{u}\in Y_{\mathrm{rad}}$. Hence, since $T_u(w)=0$ for all $w \in Y_{\mathrm{rad}}$, one has $T_u(\hat{u})=0$, that is, $\|T_u\|_{Y'}=0\Leftrightarrow T_u (w)=0,\ \text{for all}\ w\in Y,$ and therefore \eqref{weak-sol-def} ensues. This concludes the proof of the proposition.
\end{proof}


\section{The mountain pass structure}

In a standard way, one can check in the next lemma that the functional $J$ has the geometric structure required by the Mountain-Pass Theorem.

\begin{Lemma}\label{geometria}
Assume that $(V,Q)\in \mathcal{K}$ and $f$ satisfies \eqref{def.cresc.critico}, $(f_1)$ and $(f_2)$. Then\\
$i)$ there exist $\rho,\gamma>0$ such that
    $$
J(u)\geq\gamma,\ \text{for all}\ \ \|u\|=\rho;
    $$
$ii)$ there exists $e_{\star}\in Y_{\mathrm{rad}}$, with $\|e_{\star}\|>\rho$, such that $J(e_{\star})<0$.
\end{Lemma}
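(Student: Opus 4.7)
For part $(i)$, the plan is to show that the nonlocal term is of strictly higher order than $\|u\|^2$ for $\|u\|$ small, so that the quadratic energy dominates. I would first apply the Hardy-Littlewood-Sobolev inequality with $t=r=\tfrac{4}{4-\mu}$ to obtain
$$
\int_{\mathbb{R}^2}[|x|^{-\mu}*(Q(|x|)F(u))]Q(|x|)F(u)\,\mathrm{d}x \le C(\mu)\left(\int_{\mathbb{R}^2}Q(|x|)^{\frac{4}{4-\mu}}|F(u)|^{\frac{4}{4-\mu}}\,\mathrm{d}x\right)^{\frac{4-\mu}{2}}.
$$
Using the pointwise estimate \eqref{cresc-F} together with the elementary bound $(a+b)^{\frac{4}{4-\mu}}\le C(a^{\frac{4}{4-\mu}}+b^{\frac{4}{4-\mu}})$, I would split the inner integral into a polynomial piece and an exponential piece. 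Corollary \ref{tec3} with $q=(4-\mu)/2$ bounds the polynomial part by $C\varepsilon\|u\|^2$. For the exponential part I would use $(e^{\alpha s^2}-1)^{\frac{4}{4-\mu}}\le e^{\frac{4\alpha}{4-\mu}s^2}-1$ and then invoke Lemma \ref{tec4} with the pair $(\tfrac{4p}{4-\mu},\tfrac{4\alpha}{4-\mu})$ in place of $(p,\alpha)$, producing a bound $C\|u\|^{\frac{4p}{4-\mu}}$, valid whenever $\|u\|\le \rho$ is small enough. Choosing $p>1$ in \eqref{cresc-f-critica} and raising the resulting sum to the power $(4-\mu)/2$ then yields
$$
J(u)\ge \tfrac{1}{2}\|u\|^2 - C_1\varepsilon^{(4-\mu)/2}\|u\|^{4-\mu} - C_2\|u\|^{2p}.
$$
Since $4-\mu>2$ and $2p>2$, fixing first $\varepsilon$ small and then $\rho>0$ small gives $J(u)\ge\gamma>0$ for all $\|u\|=\rho$.

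For part $(ii)$, I will use $(f_2)$ to produce a function along which $J\to-\infty$. The growth hypothesis \eqref{def.cresc.critico} forces $f(s)\to+\infty$ as $s\to+\infty$, hence $F(s)>0$ for all sufficiently large $s$; then integrating the inequality $\theta/s \le F'(s)/F(s)$ (coming from $(f_2)$) yields $F(s)\ge F(s_0)(s/s_0)^\theta$ for every $s\ge s_0$, where $s_0>0$ is any point with $F(s_0)>0$. I would choose a radial nonnegative $u_0\in C_0^\infty(\mathbb{R}^2)$ supported in an annulus $\{r_1\le |x|\le r_2\}$ with $0<r_1<r_2$, so that $Q(|x|)$ is bounded below by a positive constant on $\mathrm{supp}(u_0)$ and $u_0\in Y_{\mathrm{rad}}$. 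Fix $K'\subset\mathrm{supp}(u_0)$ of positive measure on which $u_0\ge\delta>0$; for $t$ sufficiently large, $tu_0\ge s_0$ on $K'$, and therefore $F(tu_0)\ge c\, t^\theta u_0^\theta$ there. Restricting the Choquard double integral to $K'\times K'$, where $|x-y|^{-\mu}$ is bounded below by a positive constant, yields
$$
\int_{\mathbb{R}^2}[|x|^{-\mu}*(Q(|x|)F(tu_0))]Q(|x|)F(tu_0)\,\mathrm{d}x \ge C_3\, t^{2\theta}.
$$
Since $2\theta>2$, one has $J(tu_0)\le \tfrac{t^2}{2}\|u_0\|^2-\tfrac{C_3}{2}t^{2\theta}\to -\infty$ as $t\to+\infty$, and setting $e_\star:=tu_0$ for $t$ large enough gives $\|e_\star\|>\rho$ and $J(e_\star)<0$.

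The main technical subtlety appears in part $(i)$, where one must simultaneously satisfy the smallness threshold on $\|u\|$ coming from Lemma \ref{tec4} (with the enlarged exponential parameter $\tfrac{4\alpha}{4-\mu}$) and extract an estimate genuinely beyond $\|u\|^2$. These constraints are compatible because $\mu<2$ gives $4-\mu>2$ and \eqref{cresc-f-critica} permits $p>1$, but the parameters must be fixed in the right order: first pick $p>1$, then choose $\alpha>\alpha_0$ close to $\alpha_0$ with $\varepsilon$ small, and finally take $\rho$ small enough to meet both the Trudinger-Moser threshold from Lemma \ref{tec4} and the superquadratic absorption.
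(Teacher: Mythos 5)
Your proof is correct, and part $(i)$ is essentially the paper's argument: Hardy--Littlewood--Sobolev with $t=r=\frac{4}{4-\mu}$, the splitting via \eqref{cresc-F}, Corollary \ref{tec3} for the polynomial piece and Lemma \ref{tec4} for the exponential piece, with the Trudinger--Moser threshold forcing $\rho$ small; your bookkeeping (raising \eqref{cresc-F} to the power $\frac{4}{4-\mu}$, using $(e^{t}-1)^{s}\le e^{st}-1$ for $s\ge1$, ending with $\|u\|^{4-\mu}+\|u\|^{2p}$, so $p>1$ suffices) is in fact slightly more careful than the paper's, which works with $|u|^{p}(e^{\alpha u^2}-1)$ directly and requires $p>\frac{4}{4-\mu}$; either choice is available from \eqref{cresc-f-critica}. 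Part $(ii)$ takes a genuinely different, though equally standard, route: the paper integrates the Ambrosetti--Rabinowitz condition at the level of the fibering map, showing $\Phi'(t)\ge\frac{2\theta}{t}\Phi(t)$ for $\Phi(t)=I(tu_0/\|u_0\|)$ and deducing $I(tu_0)\ge C t^{2\theta}$, whereas you integrate $(f_2)$ pointwise to get $F(s)\ge F(s_0)(s/s_0)^{\theta}$ for large $s$ and then bound the Choquard double integral from below on a product set $K'\times K'$ inside an annulus where $Q$, $u_0$ and the Riesz kernel are bounded below. Both yield the superquadratic growth $t^{2\theta}$ with $\theta>1$. Your version has the advantage of not presupposing $I(u_0/\|u_0\|)>0$ (which the paper's division by $\Phi(t)$ implicitly requires), since you produce the positivity explicitly by localization; the paper's version is shorter and works for an arbitrary nonnegative $u_0\in Y_{\mathrm{rad}}\setminus\{0\}$ once that positivity is granted. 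Note that both arguments (yours when discarding the contribution outside $K'\times K'$, the paper's when multiplying the pointwise inequality $f(s)s\ge\theta F(s)$ against the kernel) tacitly use $F\ge0$, an assumption the paper itself invokes elsewhere, so this is not a gap relative to the paper.
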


\begin{proof}
$i)$ Again by the Hardy-Littlewood-Sobolev inequality with $t=r=\frac{4}{4-\mu}$ and \eqref{cresc-F}, we have
\begin{align*}
J(u)&\geq \frac{1}{2}\|u\|^2-C\left(\int_{\mathbb{R}^2}Q(|x|)^{\frac{4}{4-\mu}}|F(u)|^{\frac{4}{4-\mu}}\,\mathrm{d}x\right)^{\frac{4-\mu}{2}}\\
&\geq \frac{1}{2}\|u\|^2-C_1\varepsilon\left(\int_{\mathbb{R}^2}Q(|x|)^{\frac{4}{4-\mu}}u^2\,\mathrm{d}x\right)^{\frac{4-\mu}{2}}-C_2\left(\int_{\mathbb{R}^2}Q(|x|)^{\frac{4}{4-\mu}}|u|^p(e^{\alpha u^2}-1)\,\mathrm{d}x\right)^{\frac{4-\mu}{2}}.
\end{align*}
Now, using Corollary \ref{tec3} with $q=\frac{4-\mu}{2}$ and Lemma  \ref{tec4} with $p>\frac{4}{4-\mu}$, we get
\begin{align*}
J(u)&\geq \frac{1}{2}\|u\|^2-C_3\varepsilon\|u\|^{2}-C_4\|u\|^{\frac{p(4-\mu)}{2}}\\
&=\left(\frac{1}{2}-C_3\varepsilon\right)\|u\|^{2}-C_4\|u\|^{\frac{p(4-\mu)}{2}}\\
&=\left(\frac{1}{2}-C_3\varepsilon\right)\rho^{2}-C_4\rho^{\frac{p(4-\mu)}{2}},
\end{align*}
since $0<\rho<\sqrt{\frac{2\pi}{\alpha}(1+\frac{2b_0}{4-\mu})}$. 
Now, taking $0<\varepsilon<\frac{1}{2C_3}$ and $\rho>0$ small enough, we deduce
$$
\beta:=\left(\frac{1}{2}-C_3\varepsilon\right)\rho^{2}-C_4\rho^{\frac{p(4-\mu)}{2}}>0,
$$
because $\frac{p(4-\mu)}{2}>2$ and the item is proved.

In order to verify $ii)$, fix $u_0\in Y_{\mathrm{rad}}\setminus\{0\}$ with $u_0\geq 0$ in $\mathbb{R}^{2}$. For each $t>0$, we set
$$
\Phi(t):=I\left(\frac{tu_0}{\|u_0\|}\right),
$$
where
$$
I(u):=\frac{1}{2}\int_{\mathbb{R}^2}\left[|x|^{-\mu}*(Q(|x|)F(u))\right]Q(|x|)F(u)\,\mathrm{d}x.
$$
From $(f_2)$, it follows that
\begin{align*}
\Phi'(t)&=I'\left(\frac{tu_0}{\|u_0\|}\right)\frac{u_0}{\|u_0\|}\\
&=\frac{1}{t}\int_{\mathbb{R}^2}\left[|x|^{-\mu}*\left(Q(|x|)F\left(\frac{tu_0}{\|u_0\|}\right)\right)\right]Q(|x|)f\left(\frac{tu_0}{\|u_0\|}\right)\frac{tu_0}{\|u_0\|}\,\mathrm{d}x\\
&\geq\frac{2\theta}{t}\frac{1}{2}\int_{\mathbb{R}^2}\left[|x|^{-\mu}*\left(Q(|x|)F\left(\frac{tu_0}{\|u_0\|}\right)\right)\right]Q(|x|)F\left(\frac{tu_0}{\|u_0\|}\right)\mathrm{d}x\\
&=\frac{2\theta}{t}\Phi(t),
\end{align*}
which yields
\begin{equation}\label{bala1-2geometria}
\frac{\Phi'(t)}{\Phi(t)}\geq\frac{2\theta}{t},\ \text{for all}\ t>0.
\end{equation}
Integrating \eqref{bala1-2geometria} over $[1,t\|u_0\|]$, with $t>1/\|u_0\|$, we get
$$
I(tu_0)\geq C_5t^{2\theta},
$$
where $C_5=I\left(\frac{u_0}{\|u_0\|}\right)\|u_0\|^{2\theta}$. Hence
$$
J(tu_0)\leq\frac{t^2}{2}\|u_0\|^2-C_5t^{2\theta}.
$$
Since $\theta>1$, we conclude that $I(tu_0)\rightarrow-\infty$
as $t\rightarrow+\infty$. Therefore, setting $e_{\star}:=tu_0$ with $t$ large
enough, the proof of the lemma is finished.
\end{proof}

Now, we recall that $(u_n)\subset Y_{\mathrm{rad}}$ is a Palais-Smale sequence at level $c\in\mathbb{R}$ for the functional $J$, (PS)$_c$ shortly, if
$$
J(u_n)\rightarrow c\quad \textrm{and}\quad J'(u_n)\rightarrow0,\ \textrm{as}\ n\rightarrow+\infty.
$$
We say that $J$ satisfies the (PS)$_c$ compactness condition if any (PS)$_c$ sequence has a convergent subsequence. The next lemma is an important tool in our analysis.

\begin{Lemma}\label{limitacao-de-seq-ps}
Let $c\in\mathbb{R}$ and $(u_n)\subset Y_{\mathrm{rad}}$ be a (PS)$_c$ sequence for $J$. Then $(u_n)$ is bounded in $Y_{\mathrm{rad}}$ and 
\begin{equation}\label{est-norma-de-seq-ps}
\|u_n\|^{2}\leq \frac{2\theta}{\theta-1}c+o_n(1).
\end{equation}
\end{Lemma}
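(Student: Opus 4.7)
The plan is to follow the standard Ambrosetti--Rabinowitz style computation, combining the Palais--Smale conditions $J(u_n)\to c$ and $J'(u_n)\to 0$ in $Y'_{\mathrm{rad}}$ with the superlinearity hypothesis $(f_2)$. The key algebraic identity I would exploit is the difference $J(u_n)-\frac{1}{2\theta}J'(u_n)u_n$, since the factor $1/(2\theta)$ is tailor-made to absorb the coefficient $1/2$ of the nonlocal quadratic term using $\theta F(s)\le f(s)s$.

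First I would expand
$$
J(u_n)-\frac{1}{2\theta}J'(u_n)u_n
=\frac{\theta-1}{2\theta}\|u_n\|^{2}
+\frac{1}{2\theta}\int_{\mathbb{R}^{2}}\!\bigl[|x|^{-\mu}\!*\!(Q(|x|)F(u_n))\bigr]Q(|x|)\bigl[f(u_n)u_n-\theta F(u_n)\bigr]\mathrm{d}x .
$$
The convolution $|x|^{-\mu}*(Q(|x|)F(u_n))$ is nonnegative (since $F(s)\ge 0$ and $Q>0$), and by $(f_2)$ the bracket $f(u_n)u_n-\theta F(u_n)$ is nonnegative as well, so the integral term is $\ge 0$. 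This yields the key one-sided bound
$$
\frac{\theta-1}{2\theta}\|u_n\|^{2}\;\le\; J(u_n)-\frac{1}{2\theta}J'(u_n)u_n .
$$

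Next I would insert the Palais--Smale information. From $J(u_n)=c+o_n(1)$ and $\|J'(u_n)\|_{Y'_{\mathrm{rad}}}\to 0$, we have $|J'(u_n)u_n|\le \|J'(u_n)\|_{Y'_{\mathrm{rad}}}\|u_n\|=o_n(1)\|u_n\|$, hence
$$
\frac{\theta-1}{2\theta}\|u_n\|^{2}\;\le\; c+o_n(1)+o_n(1)\|u_n\|.
$$
Since $\theta>1$, a standard argument (dividing by $\|u_n\|^{2}$ and passing to the limit, getting a contradiction if $\|u_n\|\to\infty$) gives boundedness of $(u_n)$ in $Y_{\mathrm{rad}}$. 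Once $(u_n)$ is bounded, the term $o_n(1)\|u_n\|$ is itself $o_n(1)$, and rearranging yields
$$
\|u_n\|^{2}\;\le\;\frac{2\theta}{\theta-1}\,c+o_n(1),
$$
which is exactly \eqref{est-norma-de-seq-ps}.

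I do not anticipate a serious obstacle here: the only point that warrants care is justifying that the nonlocal integral with integrand $Q(|x|)[f(u_n)u_n-\theta F(u_n)]$ is finite (so that the identity above makes sense), but this is already guaranteed by the computations leading to the well-definedness of $J$ and $J'$ on $Y_{\mathrm{rad}}$ carried out in Section~3 (Proposition \ref{tec5} together with the Hardy--Littlewood--Sobolev inequality and Lemma \ref{tec4}). Everything else is routine manipulation of the Palais--Smale conditions.
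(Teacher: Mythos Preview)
Your proposal is correct and follows essentially the same approach as the paper: both compute $J(u_n)-\frac{1}{2\theta}J'(u_n)u_n$, use $(f_2)$ together with the nonnegativity of the convolution term to drop the nonlocal integral, and then bootstrap from the resulting inequality $\frac{\theta-1}{2\theta}\|u_n\|^{2}\le c+o_n(1)+o_n(1)\|u_n\|$ to boundedness and finally to \eqref{est-norma-de-seq-ps}. Your additional remark about finiteness of the nonlocal integral is a nice point of rigor, but otherwise the two arguments are identical.
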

\begin{proof}
Let $(u_n)\subset Y_{\mathrm{rad}}$ be a (PS)$_c$ for the functional $J$. Thus,
$$
\frac{1}{2}\|u_n\|^2-\dfrac{1}{2}\int_{\mathbb{R}^2}\left[|x|^{-\mu}*(Q(|x|)F(u_n))\right]Q(|x|)F(u_n)\,\mathrm{d}x\rightarrow c
$$
and
$$
\|u_n\|^2-\int_{\mathbb{R}^2}\left[|x|^{-\mu}*(Q(|x|)F(u_n))\right]Q(|x|)f(u_n)u_n\,\mathrm{d}x\rightarrow 0,
$$
as $n\rightarrow+\infty$. From the above convergences, assumption $(f_2)$ and taking into account that $\theta>1$, we deduce that
\begin{align*}
&c+o_n(1)+\frac{1}{2\theta}\|J'(u_n)\|\|u_n\| \geq J(u_n)-\frac{1}{2\theta}J'(u_n)u_n\\
&=\left(\frac{1}{2}-\frac{1}{2\theta}\right)\|u_n\|^2+\frac{1}{2\theta}\int_{\mathbb{R}^2}\left[|x|^{-\mu}*(Q(|x|)F(u_n))\right]Q(|x|)[f(u_n)u_n-\theta F(u_n)]\,\mathrm{d}x\\
&\geq\left(\frac{1}{2}-\frac{1}{2\theta}\right)\|u_n\|^2,
\end{align*}
which implies that $(u_n)$ is bounded in $Y_{\mathrm{rad}}$. For the second part, by the last inequality, the boundedness of $(u_n)$ in $Y_{\mathrm{rad}}$ and since $J'(u_n)\rightarrow0\ \textrm{as}\ n\rightarrow+\infty$, it follows that
$$
\frac{\theta-1}{2\theta}\|u_n\|^{2}\leq c+o_n(1)+C\|J'(u_n)\|=c+o_n(1),
$$
which achieves the desired estimate and we conclude the proof.
\end{proof}

\section{Proof of Theorems \ref{main-thm} and \ref{ground-state}}

To prove Theorems \ref{main-thm} and \ref{ground-state}, we need to establish the following compactness result:
\begin{Proposition}\label{ps}
	The energy functional $J$ satisfies the (PS)$_c$ condition for all $c\in \mathbb{R}$ satisfying
	$$
	c<c_0:=\frac{4-\mu}{\alpha_0}\left(1+\frac{2b_0}{4-\mu}\right)\frac{\pi(\theta-1)}{2\theta}.$$
\end{Proposition}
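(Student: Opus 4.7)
The plan is to follow the standard recipe for Palais–Smale compactness in exponential-critical problems, but the level $c_0$ is tailored precisely so that the Trudinger–Moser threshold in Proposition~\ref{TM} can be applied to the tail nonlinearity. Let $(u_n)\subset Y_{\mathrm{rad}}$ be a $(\mathrm{PS})_c$ sequence with $c<c_0$. By Lemma~\ref{limitacao-de-seq-ps}, $(u_n)$ is bounded and, for $n$ large,
$$
\|u_n\|^{2}\le\frac{2\theta}{\theta-1}c+o_n(1)<\frac{(4-\mu)\pi}{\alpha_0}\Bigl(1+\frac{2b_0}{4-\mu}\Bigr).
$$
Consequently, one can choose $\alpha>\alpha_0$ close enough to $\alpha_0$ so that $\tfrac{4\alpha}{4-\mu}\|u_n\|^{2}\le M<4\pi(1+\tfrac{2b_0}{4-\mu})$ uniformly in $n$; this is the quantitative input that makes Proposition~\ref{TM} applicable. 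Passing to a subsequence, I would extract $u_n\rightharpoonup u$ in $Y_{\mathrm{rad}}$, $u_n\to u$ strongly in $L^q(\mathbb{R}^2;Q)$ for every $2\le q<\infty$ (Lemma~\ref{imersao-principal}), and $u_n\to u$ a.e.

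Next I would identify $u$ as a critical point of $J$. For a test function $\phi\in C_0^{\infty}(\mathbb{R}^2)$ (and then general $\phi\in Y_{\mathrm{rad}}$ by density, using Proposition~\ref{tec5}) the linear part passes to the limit by weak convergence, so the real task is the nonlocal term. By the Hardy–Littlewood–Sobolev inequality with $t=r=\tfrac{4}{4-\mu}$, it suffices to prove
$$
Q(|x|)F(u_n)\to Q(|x|)F(u)\quad\text{and}\quad Q(|x|)f(u_n)\phi\to Q(|x|)f(u)\phi\quad\text{in }L^{\frac{4}{4-\mu}}(\mathbb{R}^2).
$$
For the first, I would combine the growth estimate \eqref{cresc-F}, Corollary~\ref{tec2} (to handle the polynomial part), and the uniform Trudinger–Moser bound from Proposition~\ref{TM} applied with exponent $\tfrac{4\alpha}{4-\mu}$ (permissible by the previous paragraph), and then invoke the Brezis–Lieb / Vitali convergence theorem using a.e.\ convergence plus uniform integrability of $Q^{\frac{4}{4-\mu}}(e^{\gamma u_n^2}-1)^{\frac{4}{4-\mu}}$ for some $\gamma$ slightly larger than $\alpha$. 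The second convergence is analogous, splitting $f$ by \eqref{cresc-f-critica} and controlling the exponential piece by Hölder's inequality against the Trudinger–Moser bound and the strong $L^q(\mathbb{R}^2;Q)$ convergence of $\phi$-weighted terms. This yields $J'(u)=0$.

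For the final strong convergence step, I would test $J'(u)$ against $u$ itself and compare with $J'(u_n)u_n\to 0$. By exactly the same convergence scheme (with $\phi$ replaced by $u_n$, using this time the strong $L^q(\mathbb{R}^2;Q)$ convergence of $u_n\to u$ for $q$ large), one obtains
$$
\int_{\mathbb{R}^2}\bigl[|x|^{-\mu}*(Q(|x|)F(u_n))\bigr]Q(|x|)f(u_n)u_n\,\mathrm{d}x\;\longrightarrow\;\int_{\mathbb{R}^2}\bigl[|x|^{-\mu}*(Q(|x|)F(u))\bigr]Q(|x|)f(u)u\,\mathrm{d}x.
$$
Since $\|u_n\|^2=\int\bigl[|x|^{-\mu}*(QF(u_n))\bigr]Qf(u_n)u_n\,\mathrm{d}x+o_n(1)$ and $\|u\|^2$ equals the analogous expression with $u$, one gets $\|u_n\|\to\|u\|$; weak convergence in the Hilbert space $Y_{\mathrm{rad}}$ then upgrades to norm convergence, completing the proof.

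The main obstacle is the convergence of the exponential-growth piece of $f(u_n)\phi$ (and of $F(u_n)$) in $L^{4/(4-\mu)}(\mathbb{R}^2)$: one must use the bound on $\|u_n\|$ from $c<c_0$ with exactly the right constant so that $\tfrac{4\alpha}{4-\mu}\|u_n\|^2$ stays strictly below the Trudinger–Moser threshold $4\pi(1+2b_0/(4-\mu))$, leaving enough room to apply Hölder with a factor $\gamma>\alpha$. The algebraic identity equating $\tfrac{2\theta}{\theta-1}c_0$ with $\tfrac{(4-\mu)\pi}{\alpha_0}(1+2b_0/(4-\mu))$ is precisely what makes the argument work, and tracking this chain of exponents carefully is the only delicate point.
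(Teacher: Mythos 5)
Your proposal is correct and rests on exactly the same quantitative heart as the paper's argument: from Lemma \ref{limitacao-de-seq-ps} and $c<c_0$ one gets $\|u_n\|^2\leq \frac{4-\mu}{\alpha_0}\bigl(1+\frac{2b_0}{4-\mu}\bigr)\pi-\delta$ for large $n$, so that for $q>1$ close to $1$ and $\alpha>\alpha_0$ close to $\alpha_0$ the exponent $\frac{4q\alpha}{4-\mu}\|u_n\|^2$ stays strictly below the threshold $4\pi\bigl(1+\frac{2b_0}{4-\mu}\bigr)$ of Proposition \ref{TM}; this is precisely the paper's estimate \eqref{tec7}. Where you diverge is in how strong convergence is concluded. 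The paper never identifies the weak limit as a critical point first: it uses convexity of $\Psi(u)=\frac12\|u\|^2$ to write $\frac12\|u\|^2-\frac12\|u_n\|^2\geq J'(u_n)(u-u_n)+\int_{\mathbb{R}^2}\bigl[|x|^{-\mu}*(Q(|x|)F(u_n))\bigr]Q(|x|)f(u_n)(u-u_n)\,\mathrm{d}x$, so it only needs the single convergence \eqref{tec6}, proved via Hardy--Littlewood--Sobolev, the boundedness of $|Q(|x|)F(u_n)|_{4/(4-\mu)}$ from \eqref{tec7}, and $|Q(|x|)f(u_n)(u_n-u)|_{4/(4-\mu)}\to0$ from Corollary \ref{tec2}; weak lower semicontinuity then gives $\|u_n\|\to\|u\|$. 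Your route (first $J'(u)=0$, then convergence of the full nonlocal term evaluated at $u_n$ against $u_n$, then norm matching) is viable with the same tools but strictly more laborious: the Vitali/Brezis--Lieb step for $Q(|x|)F(u_n)\to Q(|x|)F(u)$ in $L^{4/(4-\mu)}(\mathbb{R}^2)$ needs tightness at infinity and near the possibly singular origin, not just uniform integrability on compacta, which is what Lemma \ref{tec1} and Corollaries \ref{tec2}--\ref{tec3} must be invoked for; replacing the fixed test function $\phi$ by the varying $u_n$ in your last step reduces, after writing $u_n=(u_n-u)+u$, to exactly the $A_n^2$ estimate the paper performs anyway; and to get $J'(u)u=0$ you can only test with radial $\phi$ (since $J'(u_n)\to0$ in the dual of $Y_{\mathrm{rad}}$) and must then use density of smooth radial functions in $Y_{\mathrm{rad}}$ together with Proposition \ref{tec5}. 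The paper's convexity device buys the ability to skip all these intermediate identifications and prove a single limit; your scheme buys the information $J'(u)=0$ along the way, which is in any case recovered a posteriori once strong convergence is established.
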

\begin{proof}
	Let $c<c_0$ and $(u_n)\subset Y_{\mathrm{rad}}$ be a (PS)$_c$ sequence for $J$. From Lemma~\ref{limitacao-de-seq-ps}, $(u_n)$ is bounded in $Y_{\mathrm{rad}}$. Without loss of generality, we may assume that $u_n\rightharpoonup u$ weakly in $Y_{\mathrm{rad}}$. We will prove that, going if necessary to a subsequence, $u_n\rightarrow u$ strongly in $Y_{\mathrm{rad}}$. By the convexity of the functional $\Psi(u):=\frac{1}{2}\|u\|^{2}$, we obtain
	\begin{align*}
	\frac{1}{2}\|u\|^{2}-\frac{1}{2}\|u_n\|^{2}&\geq \Psi'(u_n)(u-u_n)\\
	&=\int_{\mathbb{R}^2}\nabla u_n\cdot\nabla(u-u_n)\,\mathrm{d}x+\int_{\mathbb{R}^2}V(|x|)u_n(u-u_n)\,\mathrm{d}x\\
	&=J'(u_n)(u-u_n)+\int_{\mathbb{R}^2}\left[|x|^{-\mu}*(Q(|x|)F(u_n))\right]Q(|x|)f(u_n)(u-u_n)\,\mathrm{d}x.
	\end{align*}
	Now, we claim that 
	\begin{equation}\label{tec6}
	\int_{\mathbb{R}^2}\left[|x|^{-\mu}*(Q(|x|)F(u_n))\right]Q(|x|)f(u_n)(u-u_n)\,\mathrm{d}x\rightarrow0,\ \textrm{as}\ n\rightarrow+\infty.
	\end{equation}
	Assuming \eqref{tec6} as true, we get $\|u_n\|^{2}\leq\|u\|^{2}+o_n(1)$
	and therefore
	\begin{equation}\label{limsup}
	\limsup_{n\rightarrow+\infty}\|u_n\|^{2}\leq \|u\|^{2}.
	\end{equation}
	On the other hand, by the weak convergence
	$$
	\|u\|^{2}\leq \liminf_{n\rightarrow+\infty}\|u_n\|^{2},
	$$
	which together with \eqref{limsup} yields that $\|u_n\|^{2}\rightarrow \|u\|^{2}$. So $u_n\rightarrow u$ in $Y_{\textrm{rad}}$ and this concludes the proof. 
	
	Now, let us to prove \eqref{tec6}. By the Hardy-Littlewood-Sobolev inequality with $t=r=\frac{4}{4-\mu}$, we have
	$$
	\left|\int_{\mathbb{R}^2}\left[|x|^{-\mu}*(Q(|x|)F(u_n))\right]Q(|x|)f(u_n)(u_n-u)\,\mathrm{d}x\right|\leq C_1|Q(|x|)F(u_n)|_{\frac{4}{4-\mu}}|Q(|x|)f(u_n)(u_n-u)|_{\frac{4}{4-\mu}}.
	$$
	Writing $A_n^{1}=|Q(|x|)F(u_n)|_{\frac{4}{4-\mu}}$ and $A_n^{2}=|Q(|x|)f(u_n)(u_n-u)|_{\frac{4}{4-\mu}}$, we shall prove that $A_n^{1}\leq C,\ \text{for all}\ n\in\mathbb{N}$ and for some positive constant $C$, and $A_n^{2}\rightarrow0\ \textrm{as}\ n\rightarrow+\infty$. From \eqref{cresc-F}, Hölder's inequality and Lemma \ref{limitacao-de-seq-ps}, we get
	\begin{align*}
	A_n^{1}{^{\frac{4}{4-\mu}}}&\leq C_2\int_{\mathbb{R}^2}Q(|x|)^{\frac{4}{4-\mu}}u_n^2\,\mathrm{d}x+C_3\int_{\mathbb{R}^2}Q(|x|)^{\frac{4}{4-\mu}}|u_n|^{\frac{4}{4-\mu}}(e^{\alpha u_n^2}-1)^{\frac{4}{4-\mu}}\,\mathrm{d}x\\
	&\leq C_4\|u_n\|^{2}+C_5\|u_n\|^{\frac{4q'}{4-\mu}}\left(\int_{\mathbb{R}^2}Q(|x|)^{\frac{4}{4-\mu}}\left(e^{\frac{4q\alpha}{4-\mu}\|u_n\|^{2}(u_n/\|u_n\|)^2}-1\right)\mathrm{d}x\right)^{\frac{1}{q}}\\
	&\leq C_6+C_7\left(\int_{\mathbb{R}^2}Q(|x|)^{\frac{4}{4-\mu}}\left(e^{\frac{4q\alpha}{4-\mu}\|u_n\|^{2}(u_n/\|u_n\|)^2}-1\right)\mathrm{d}x\right)^{\frac{1}{q}},
	\end{align*}
	where $q>1$ is close to $1$ and $q'=\frac{q}{q-1}$. From \eqref{est-norma-de-seq-ps} and since $c<\frac{4-\mu}{\alpha_0}\left(1+\frac{2b_0}{4-\mu}\right)\frac{\pi(\theta-1)}{2\theta}$, there exist $n_0\in\mathbb{N}$ and $\delta>0$ such that
	$$
	\|u_n\|^{2}\leq \frac{4-\mu}{\alpha_0}\left(1+\frac{2b_0}{4-\mu}\right)\pi-\delta,\ \ \ \forall\ n> n_0.
	$$
	For $q>1$ close to $1$ and $\alpha>\alpha_0$ close to $\alpha_0$ we will still have for some $\hat{\delta}>0$ that
	$$
	\frac{4q\alpha}{4-\mu}\|u_n\|^{2}\leq \left(1+\frac{2b_0}{4-\mu}\right)4\pi-\hat{\delta},\ \ \ \forall\ n> n_0.
	$$
	Hence, from our version of the Trudinger-Moser inequality in Proposition \ref{TM}, we obtain
	\begin{equation}\label{tec7}
	\int_{\mathbb{R}^2}Q(|x|)^{\frac{4}{4-\mu}}\left(e^{\frac{4q\alpha}{4-\mu}\|u_n\|^{2}(u_n/\|u_n\|)^2}-1\right)\mathrm{d}x\leq C_8,
	\end{equation}
	which implies that $A_n^{1}\leq C$. In what follows, we will show that $A_n^{2}\rightarrow0,\ \textrm{as}\ n\rightarrow+\infty$. From \eqref{cresc-f-critica}, Hölder's inequality, Lemma \ref{limitacao-de-seq-ps} and \eqref{tec7}, we have
	\begin{align*}
	A_n^{2}{^{\frac{4}{4-\mu}}}&\leq C_9\int_{\mathbb{R}^2}Q(|x|)^{\frac{4}{4-\mu}}|u_n|^{\frac{2(2-\mu)}{4-\mu}}|u_n-u|^{\frac{4}{4-\mu}}\,\mathrm{d}x+C_{10}\int_{\mathbb{R}^2}Q(|x|)^{\frac{4}{4-\mu}}|u_n-u|^{\frac{4}{4-\mu}}(e^{\alpha u_n^2}-1)^{\frac{4}{4-\mu}}\,\mathrm{d}x\\
	&\leq\left(\int_{\mathbb{R}^2}Q(|x|)^{\frac{4}{4-\mu}}u_n^2\,\mathrm{d}x\right)^{\frac{2-\mu}{4-\mu}}\left(\int_{\mathbb{R}^2}Q(|x|)^{\frac{4}{4-\mu}}|u_n-u|^2\,\mathrm{d}x\right)^{\frac{2}{4-\mu}}\\
	&\ \ \ \ +C_{10}\left(\int_{\mathbb{R}^2}Q(|x|)^{\frac{4}{4-\mu}}\left(e^{\frac{4q\alpha}{4-\mu}u_n^2}-1\right)\mathrm{d}x\right)^{\frac{1}{q}}\left(\int_{\mathbb{R}^2}Q(|x|)^{\frac{4}{4-\mu}}|u_n-u|^{\frac{4q'}{4-\mu}}\,\mathrm{d}x\right)^{\frac{1}{q'}}\\
	&\leq C_{11}\left(\int_{\mathbb{R}^2}Q(|x|)^{\frac{4}{4-\mu}}|u_n-u|^2\,\mathrm{d}x\right)^{\frac{2}{4-\mu}}+C_{12}\left(\int_{\mathbb{R}^2}Q(|x|)^{\frac{4}{4-\mu}}|u_n-u|^{\frac{4q'}{4-\mu}}\,\mathrm{d}x\right)^{\frac{1}{q'}}.
	\end{align*}
	From Corollary \ref{tec2},
	$$
	\int_{\mathbb{R}^2}Q(|x|)^{\frac{4}{4-\mu}}|u_n-u|^2\,\mathrm{d}x\rightarrow 0\ \ \text{and}\ \int_{\mathbb{R}^2}Q(|x|)^{\frac{4}{4-\mu}}|u_n-u|^{\frac{4q'}{4-\mu}}\,\mathrm{d}x\rightarrow 0,\ \textrm{as}\ n\rightarrow+\infty.
	$$
	Therefore, $A_n^{2}\rightarrow0\ \textrm{as}\ n\rightarrow+\infty$, and the proof is completed.
\end{proof}

We have checked in Lemma \ref{geometria} that the functional $J$ satisfies the mountain-pass geometry. Thus, the minimax level can be characterized by
$$
0<c^{\star}:=\inf_{\gamma\in\Gamma}\max_{t\in[0,1]}J(\gamma(t)),
$$
where the set of paths $\Gamma$ is defined as
$$
\Gamma=\left\{\gamma\in C([0,1],Y_{\mathrm{rad}}):\gamma(0)=0\,\,\, \textrm{and}\,\,\, J(\gamma(1))<0\right\}.
$$

Next, we obtain an estimate for the minimax level $c^{\star}$ which will be crucial in our arguments. 
\begin{Proposition}[Minimax estimate]\label{minimax estimate}
	Suppose that $(f_3)$ is satisfied with
	$$
	\xi\geq\max\left\{\xi_1, \left[\frac{\frac{\|Q\|_{L^1(B_{1/2})}^2}{2}(q-1)\left(\frac{\xi_1^2}{q}\right)^{q/(q-1)}}{\frac{4-\mu}{\alpha_0}\left(1+\frac{2b_0}{4-\mu}\right)\frac{\pi(\theta-1)}{2\theta}}\right]^{(q-1)/2}\right\},
	$$
	where
	$$
	\xi_1:=\frac{[\pi+\|V\|_{L^1(B_1)}]^{\frac{1}{2}}}{\|Q\|_{L^1(B_{1/2})}}.
	$$
	Then, $c^{\star}<c_0$.
\end{Proposition}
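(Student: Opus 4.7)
The plan is to bound $c^\star$ by estimating $J$ along a path $\gamma(t)=tTu_0$, where $u_0\in Y_{\mathrm{rad}}$ is a carefully chosen radial test function and $T>0$ is large enough that $J(Tu_0)<0$ (such $T$ exists by Lemma~\ref{geometria}). Then $\gamma\in\Gamma$ and $\max_{t\in[0,1]}J(\gamma(t))\leq\sup_{s\geq 0}J(su_0)$, so it suffices to bound the latter by $c_0$. I would select $u_0$ non-negative, (essentially) supported in $\overline{B_1}$, with $u_0\equiv 1$ on $B_{1/2}$ and $0\leq u_0\leq 1$ elsewhere, and constructed so that the sharp norm bound
\[
\|u_0\|^2\leq\pi+\|V\|_{L^1(B_1)}=\xi_1^2\|Q\|_{L^1(B_{1/2})}^2=:A
\]
holds.

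For $s\in[0,1]$, since $su_0\leq 1$ on $\mathrm{supp}\,u_0$, hypothesis $(f_3)$ gives $F(su_0)\geq\xi(su_0)^q\geq \xi s^q$ on $B_{1/2}$; combined with $|x-y|\leq 1$ for $x,y\in B_{1/2}$, the nonlocal term satisfies
\[
I(su_0):=\frac{1}{2}\int\int\frac{Q(x)F(su_0)Q(y)F(su_0)}{|x-y|^\mu}\,\mathrm{d}x\,\mathrm{d}y\geq\frac{Bs^{2q}}{2},\qquad B:=\xi^2\|Q\|_{L^1(B_{1/2})}^2,
\]
so $J(su_0)\leq g(s):=\frac{1}{2}(As^2-Bs^{2q})$ on $[0,1]$. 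An elementary calculus exercise gives $g$ maximized at $s_\star=(A/(qB))^{1/(2q-2)}$ with value $g(s_\star)=\frac{A(q-1)}{2q}\bigl(A/(qB)\bigr)^{1/(q-1)}$. The first hypothesis $\xi\geq\xi_1$ amounts to $B\geq A$, which forces $s_\star\leq q^{-1/(2q-2)}\leq 1$, so the bound applies at the maximizer. For $s>1$ I would exploit $(f_2)$: integrating shows $F(s)/s^\theta$ is non-decreasing, and together with $F(1)\geq\xi$ from $(f_3)$ this yields $F(s)\geq\xi s^\theta$ for $s\geq 1$. On $B_{1/2}$ this gives $J(su_0)\leq\frac{1}{2}(As^2-Bs^{2\theta})$, which is $\leq 0$ at $s=1$ (since $B\geq A$) and decreasing on $[1,\infty)$ because $\theta>1$. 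Combining the two ranges, $\sup_{s\geq 0}J(su_0)=g(s_\star)$.

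It remains to check that the second hypothesis on $\xi$ forces $g(s_\star)<c_0$. Solving $g(s_\star)<c_0$ for $\xi$ and substituting $A=\xi_1^2\|Q\|_{L^1(B_{1/2})}^2$ yields, after routine algebra,
\[
\xi^2>\frac{\xi_1^{2q}\|Q\|_{L^1(B_{1/2})}^{2(q-1)}(q-1)^{q-1}}{2^{q-1}q^{q}c_0^{q-1}},
\]
which is exactly the second lower bound on $\xi$ in the statement (after squaring and rearranging the displayed expression). Combining, $c^\star\leq g(s_\star)<c_0$. The main obstacle is the construction of the test function $u_0$ realizing the sharp norm inequality $\|u_0\|^2\leq\pi+\|V\|_{L^1(B_1)}$ together with $u_0\equiv 1$ on $B_{1/2}$; this is a delicate dimension-two capacitary-type step, and once it is in hand the remainder of the argument is a one-variable optimization on $\{su_0\colon s\geq 0\}$ together with Hardy--Littlewood--Sobolev style lower bounds on the nonlocal term.
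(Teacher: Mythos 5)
Your strategy is essentially the paper's: a cutoff equal to $1$ on $B_{1/2}$ and supported in $\overline{B_1}$, the lower bound $F(su_0)\geq \xi s^q$ on $B_{1/2}$ from $(f_3)$ combined with $|x-y|^{-\mu}\geq 1$ for $x,y\in B_{1/2}$, and the one-variable maximization of $\tfrac12(As^2-Bs^{2q})$, whose value reproduces exactly the constant in the hypothesis on $\xi$ (your algebra here is correct). The only real deviation is the treatment of $s>1$: the paper uses $\xi\geq\xi_1$ to show directly that $J(\varphi_0)<0$, so the path $t\mapsto t\varphi_0$, $t\in[0,1]$, already lies in $\Gamma$ and only $(f_3)$ on $[0,1]$ is needed; you instead run the path up to $Tu_0$ and control $s\geq1$ via $F(s)\geq F(1)s^{\theta}\geq\xi s^{\theta}$ deduced from $(f_2)$, which is a correct (slightly longer) alternative. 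A small point: with the non-strict hypothesis $\xi\geq[\cdots]^{(q-1)/2}$ you only get $c^{\star}\leq c_0$ unless you inject a strict inequality somewhere (e.g.\ $|x-y|<1$ a.e.\ on $B_{1/2}\times B_{1/2}$, or a strict norm bound, as the paper does).

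The genuine gap is the step you yourself flagged and deferred: the existence of $u_0$ with $u_0\equiv1$ on $B_{1/2}$, $\mathrm{supp}\,u_0\subset\overline{B_1}$, $0\leq u_0\leq1$ and $\|u_0\|^2\leq\pi+\|V\|_{L^1(B_1)}$. This is not a delicate capacitary construction; it is impossible in general. Any $H^1$ function equal to $1$ on $B_{1/2}$ and vanishing outside $B_1$ has $\int_{\mathbb{R}^2}|\nabla u_0|^2\,\mathrm{d}x\geq \mathrm{cap}(B_{1/2},B_1)=\frac{2\pi}{\log 2}>\pi$ (the extremal being $\log(1/|x|)/\log 2$ on the annulus), so $\|u_0\|^2\leq\pi+\|V\|_{L^1(B_1)}$ would force $\|V\|_{L^1(B_1)}-\int_{B_1}Vu_0^2\geq \frac{2\pi}{\log 2}-\pi$, which fails under $(V0)$ alone (e.g.\ for any $V$ with small $L^1(B_1)$ mass). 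So the crucial norm estimate on which the whole computation (and the calibration of $\xi_1$) rests cannot be obtained as stated. For fairness, the paper's own proof has the same defect: it postulates $\varphi_0$ with $\varphi_0\equiv1$ on $B_{1/2}$, $\varphi_0\equiv0$ for $|x|\geq1$ and $|\nabla\varphi_0|\leq1$, which cannot exist (the mean value theorem forces $|\nabla\varphi_0|\geq2$ somewhere on the annulus), and its implicit estimate $\int_{B_1}|\nabla\varphi_0|^2<\pi$ is precisely what the capacity bound forbids. So you have reconstructed the intended argument faithfully, but the step you identified as the main obstacle is the one that would fail; repairing it requires either widening the annulus (outer radius at least $e^{2}$ times the inner one) or replacing $\pi$ by $2\pi/\log 2$ in the definition of $\xi_1$, i.e.\ altering the constants in the Proposition itself.
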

\begin{proof}
	First, we are going to consider a function $\varphi_0\in C_{0,\textrm{rad}}^{\infty}(\mathbb{R}^2)$ given by $\varphi_0(x)=1$ if $|x|\leq 1/2$, $\varphi_0(x)=0$ if $|x|\geq 1$, $0\leq\varphi_0(x)\leq 1$ for all $x\in \mathbb{R}^2$ and $|\nabla \varphi_0(x)|\leq 1$ for all $x\in \mathbb{R}^2$. By $(f_3)$ we infer that if $\xi\geq\xi_1$ then
	$$
	\begin{aligned}
	J(\varphi_0)&\leq\frac{1}{2} \int_{B_1}[|\nabla \varphi_0|^2+V(|x|)\varphi_0^2]\,\mathrm{d}x-\frac{1}{2}\int_{B_1}\left[|x|^{-\mu}*(Q(|x|)F(\varphi_0))\right]Q(|x|)F(\varphi_0)\,\mathrm{d}x\\
	&< \frac{\pi}{2}+\frac{1}{2}\|V\|_{L^1(B_1)}-\frac{\xi^2}{2} \int_{B_{1/2}}\left[|x|^{-\mu}*Q(|x|)\right]Q(|x|)\,\mathrm{d}x\\
	&\leq\frac{1}{2}[\pi+\|V\|_{L^1(B_1)}]-\frac{\xi_1^2}{2}\|Q\|_{L^1(B_{1/2})}^2=0.
	\end{aligned}
	$$
	In particular, 
	\begin{equation}\label{bebe1}
	\int_{B_2}[|\nabla \varphi_0|^2+V(|x|)\varphi_0^2]\,\mathrm{d}x<\xi_1^2\|Q\|_{L^1(B_{1/2})}^2.
	\end{equation}
	According to the definition of $\varphi_0$, \eqref{bebe1} and the hypothesis on $\xi$, a simple computation shows that
	$$
	\begin{aligned}
	c^{\star}\leq \max_{t\in [0,1]}I(t\varphi_0)&\leq \max_{t\in [0,1]}\left[\frac{t^2}{2}\left(\int_{B_1}[|\nabla \varphi_0|^2+V(|x|)\varphi_0^2]\,\mathrm{d}x\right)-\frac{\xi^2}{2}t^{2q}\|Q\|_{L^1(B_{1/2})}^2\right]\\
	&< \max_{t\in [0,1]}\left[\frac{\xi_1^2}{2}\|Q\|_{L^1(B_{1/2})}t^2-\frac{\xi^2}{2}t^{2q}\|Q\|_{L^1(B_{1/2})}^2\right]\\
	&\leq \frac{\|Q\|_{L^1(B_{1/2})}^2}{2}\max_{t\geq 0}\left[\xi_1^2t^2-\xi^2t^{2q}\right].
	\end{aligned}
	$$
	Calculating the above maximum, we obtain
	$$
	c^{\star}< \frac{\|Q\|_{L^1(B_{1/2})}^2}{2}\frac{1}{\xi^{2/(q-1)}}(q-1)\left(\frac{\xi_1^2}{q}\right)^{q/(q-1)}.
	$$
	Thus, if 
	$$
	\xi\geq\left[\frac{\frac{\|Q\|_{L^1(B_{1/2})}^2}{2}(q-1)\left(\frac{\xi_1^2}{q}\right)^{q/(q-1)}}{\frac{4-\mu}{\alpha_0}\left(1+\frac{2b_0}{4-\mu}\right)\frac{\pi(\theta-1)}{2\theta}}\right]^{(q-1)/2}
	$$
	then $c^{\star}<c_{0}$ and proof of the proposition is done.
\end{proof}

\begin{proof}[Proof of Theorem \ref{main-thm}]
	By using Mountain-Pass Theorem without the Palais-Smale condition (see \cite{Ambro-Rabi}), there exists a (PS)$_{c^{\star}}$ sequence $(u_n)\subset Y_{\mathrm{rad}}$ for $J$. From Lemma \ref{limitacao-de-seq-ps}, $(u_n)$ is bounded in $Y_{\mathrm{rad}}$, then, up to  a subsequence, there exists $ u_\star \in Y_{\mathrm{rad}} $ such that $ u_n \rightharpoonup u_\star $ weakly in $ Y_{\mathrm{rad}}. $ Since, by Proposition \ref{ps}, the energy functional $J$ satisfies the (PS)$_{c^{\star}}$ condition, then $u_n\rightarrow u_\star$ strongly in $Y_{\mathrm{rad}}$. Therefore, $ J'(u_\star) = 0 $ and $ J(u_\star) = c^{\star} > 0. $ This ends the proof of Theorem \ref{main-thm}. 
\end{proof}

Now, we proceed to prove Theorem \ref{ground-state}.
\begin{proof}[Proof of Theorem \ref{ground-state}]
	By a ground state solution of equation \eqref{P} we mean a nontrivial solution $\tilde{u}\in Y_{\mathrm{rad}}$ of \eqref{P} such that 
	$$
	J(\tilde{u})=\min\{J(u):u\neq 0,\ u\in Y_{\mathrm{rad}}\ \text{is a critical point of}\ J\}.
	$$
	So, let
	$$
	M^{\star}:=\min_{u\in \mathcal{N}}J(u),
	$$
	where $\mathcal{N}$ is the Nehari manifold 
	$$
	\mathcal{N}:=\{J(u):u\in Y_{\mathrm{rad}}\setminus\{0\}\ \text{is a critical point of}\ J\}.
	$$
	For this aim, it is sufficient to prove that $c^{\star} \leq M^{\star}$. The Nehari manifold $\mathcal{N}$ is closely linked to the behavior of the function $h_u:t\rightarrow J(tu)$ for $t>0$. Such map is known as \textit{fibering map} that dates back to the fundamental works \cite{DP,Poho}. If $u\in \mathcal{N}$ then
	\begin{align*}
	h_u(t)&=\frac{t^{2}}{2}\|u\|^{2}-\frac{1}{2}\int_{\mathbb{R}^2}\left[|x|^{-\mu}*(Q(|x|)F(tu))\right]Q(|x|)F(tu)\,\mathrm{d}x,\\
	h_u'(t)&=t\|u\|^{2}-\int_{\mathbb{R}^2}\left[|x|^{-\mu}*(Q(|x|)F(tu))\right]Q(|x|)f(tu)u\,\mathrm{d}x.
	\end{align*}
	Since $J'(u)u=0$, as a direct consequence, we deduce
	$$
	h_u'(t)=t\int_{\mathbb{R}^2}\left[|x|^{-\mu}*Q(|x|)\left(\frac{F(u)f(u)}{u}-\frac{F(tu)f(tu)}{tu}\right)\right]Q(|x|)u^{2}\,\mathrm{d}x,
	$$
	for $t>0$. Taking into account that $f(s)/s$ is increasing for $s>0$ and $f(s)\geq 0$ for all $s\in\mathbb{R}$, we infer that $\frac{F(s)f(s)}{s}$ is also increasing for $s>0$. Hence, $h_u'(t)>0$ for $t\in(0,1)$ and $h_u'(t)<0$ for $t\in(1,\infty)$. Hence, observing $h_u'(1)=0$, we conclude that $J(u)=\max_{t\geq 0} J(tu)$. Setting $\gamma(t):=(t_0u)t$, for $t\in[0,1]$, where $t_0>1$ is sufficiently large such that $J(t_0u)<0$, we have $\gamma\in \Gamma$ ($\Gamma$ was given in the definition of $c^{\star}$), and so
	$$
	c^{\star}\leq \max_{t\in[0,1]}J(\gamma(t))\leq \max_{t\geq 0}J(tu)=J(u).
	$$
	Therefore, since $u\in \mathcal{N}$ is arbitrary, $c^{\star}\leq M^{\star}$.
\end{proof}

\section{Proof of Theorem \ref{main-thm2}}

In this section, we present the proof of our second existence result. We recall that, in this case, condition $(f_3)$ is replaced by conditions $(f_4)$ and $(f_5)$. In this context, we need to prove a new estimate for the minimax level $c^{\star}$.  For this aim, we consider the Moser sequence
$$
\widetilde{\omega}_n(x)=\frac{1}{\sqrt{2\pi}}\left\{
  \begin{aligned}
    &\sqrt{\log n}, & & \hbox{$0\leq |x|\leq \frac{1}{n}$,} \\
    &\frac{\log\frac{1}{|x|}}{\sqrt{\log n}}, & & \hbox{$\frac{1}{n}<|x|\leq 1$,} \\
    &0, & & \hbox{$|x|>1$}.
  \end{aligned}
\right.
$$
Next, we prove some useful properties of the Moser's sequence $(\widetilde{\omega}_n)\subset Y_{\textrm{rad}}$, which will be important in the sequel.
\begin{Lemma}
Assuming condition $(V0)$,  it holds $\|\widetilde{\omega}_n\|^2\leq 1+\delta_n$, where 
$$
\delta_n>0,\quad \delta_n \log n\rightarrow \frac{2M_1}{(a_0+2)^2}\quad and \quad 0<M_1=\sup_{0<|x|\leq 1}V(|x|)/|x|^{a_0}<\infty.
$$
\end{Lemma}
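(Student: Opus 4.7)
The plan is to split $\|\widetilde{\omega}_n\|^2$ into its gradient and potential pieces and handle each by a direct computation in polar coordinates, exploiting that $\widetilde{\omega}_n$ is supported in $B_1$ and has the classical Moser profile.

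For the gradient piece, I would note that $\widetilde{\omega}_n$ is radial with $\widetilde{\omega}_n'(r) = -\frac{1}{r\sqrt{2\pi\log n}}$ on $(1/n,1]$ and $\widetilde{\omega}_n'(r)=0$ elsewhere, so
\[
\int_{\mathbb{R}^2}|\nabla\widetilde{\omega}_n|^2\,\mathrm{d}x = \frac{1}{2\pi\log n}\int_{1/n}^{1}\frac{1}{r^2}\cdot 2\pi r\,\mathrm{d}r = \frac{1}{\log n}\log n = 1.
\]
This is exact: no error terms.

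For the potential piece, I would first justify that $M_1<\infty$ (the function $V(r)/r^{a_0}$ is continuous on $(0,1]$ and bounded near $0$ by $(V0)$), giving the pointwise bound $V(|x|)\leq M_1|x|^{a_0}$ for $0<|x|\leq 1$. Then I would split $\int_{B_1}V(|x|)\widetilde{\omega}_n^2\,\mathrm{d}x$ at radius $1/n$. On $B_{1/n}$, the function $\widetilde{\omega}_n^2$ is the constant $\log n/(2\pi)$, and integrating $|x|^{a_0}$ in polar coordinates gives a bound of order $\log n \cdot n^{-(a_0+2)}$, which vanishes faster than any $1/\log n$. On the annulus $B_1\setminus B_{1/n}$, substituting $u=\log(1/r)$ in
\[
\int_{1/n}^{1} r^{a_0+1}\bigl(\log(1/r)\bigr)^2\,\mathrm{d}r = \int_{0}^{\log n} u^2 e^{-(a_0+2)u}\,\mathrm{d}u
\]
reduces the problem to a gamma-type integral that converges (since $a_0+2>0$) as $n\to\infty$ to an explicit constant. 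Carrying the factor $\frac{1}{2\pi\log n}$ outside and computing the limit yields the precise behaviour of $\delta_n\log n$ claimed in the statement.

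Finally, I would set $\delta_n := \int_{\mathbb{R}^2}V(|x|)\widetilde{\omega}_n^2\,\mathrm{d}x$, so that $\|\widetilde{\omega}_n\|^2 = 1 + \delta_n$ holds with equality (the inequality in the statement is just because this is an upper estimate), and read off $\delta_n\log n\to \tfrac{2M_1}{(a_0+2)^2}$ from the gamma computation. The main subtle point is the transition region $|x|\sim 1/n$: one must check that the inner ball contribution is negligible on the scale $1/\log n$, which is immediate from $a_0>-2$, and that the tail in the substitution integral $\int_{\log n}^{\infty} u^2 e^{-(a_0+2)u}\,\mathrm{d}u$ decays exponentially in $\log n$, so it contributes to the $o(1)$ in $\delta_n\log n$ and not to the leading constant.
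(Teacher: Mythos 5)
Your overall route --- splitting $\|\widetilde{\omega}_n\|^2$ into gradient and potential parts, computing the gradient part exactly equal to $1$, majorizing $V(|x|)\leq M_1|x|^{a_0}$ on $B_1$ via $(V0)$, and reducing the annulus integral by $u=\log(1/r)$ to $\int_0^{\log n}u^2e^{-(a_0+2)u}\,\mathrm{d}u$ --- is exactly the paper's. But your final step has a genuine flaw. You set $\delta_n:=\int_{\mathbb{R}^2}V(|x|)\widetilde{\omega}_n^2\,\mathrm{d}x$ so as to get equality $\|\widetilde{\omega}_n\|^2=1+\delta_n$, and then claim to read the asymptotics of $\delta_n\log n$ off the gamma computation. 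With that definition this is false in general: since $\widetilde{\omega}_n^2\log n\leq \frac{1}{2\pi}\log^2(1/|x|)$ on $B_1$ and converges pointwise to that majorant, dominated convergence gives $\delta_n\log n\to\int_0^1 V(r)\log^2(1/r)\,r\,\mathrm{d}r$, a quantity depending on $V$ itself which is in general strictly smaller than the constant produced by the bound $V(r)\leq M_1r^{a_0}$. The gamma integral only controls the majorized quantity; this is precisely why the paper defines $\delta_n$ as the upper bound (the inner-ball term plus $\frac{M_1}{\log n}\int_{1/n}^1\log^2(1/r)\,r^{a_0+1}\,\mathrm{d}r$) and states the lemma as the inequality $\|\widetilde{\omega}_n\|^2\leq 1+\delta_n$ rather than an equality. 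Your argument is repaired simply by taking that majorant as the definition of $\delta_n$.

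Second, the constant: $\int_0^{\infty}u^2e^{-(a_0+2)u}\,\mathrm{d}u=\frac{2}{(a_0+2)^3}$, so the computation you set up yields $\delta_n\log n\to\frac{2M_1}{(a_0+2)^3}$, not $\frac{2M_1}{(a_0+2)^2}$; the exponent $2$ in the statement is a typo in the paper (the paper's own proof, and the later use of the lemma in the minimax estimate, both give $(a_0+2)^3$), so it cannot be ``read off'' from the gamma computation as you assert. A small bookkeeping point as well: after passing to polar coordinates the $2\pi$ in the prefactor $\frac{1}{2\pi\log n}$ cancels against the Jacobian, so the net factor multiplying $\int_{1/n}^1\log^2(1/r)\,r^{a_0+1}\,\mathrm{d}r$ is $\frac{M_1}{\log n}$, not $\frac{M_1}{2\pi\log n}$.
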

\begin{proof}
First, since $|\nabla[\log(1/|x|)]|^2=1/|x|^2$ and by using polar coordinates we have
\begin{equation}\label{soledade}
\int_{\mathbb{R}^2}\left|\nabla \widetilde{\omega}_n\right|^{2}\mathrm{d}x=\int_{B_1}\left|\nabla \widetilde{\omega}_n\right|^{2}\mathrm{d}x=\frac{1}{\log n}\displaystyle\int_{1/n}^{1}\frac{1}{r}\,\mathrm{d}r=1.
\end{equation}
Now, let us to estimate the integral $I_n:=\int_{\mathbb{R}^2}V(|x|) \widetilde{\omega}_n^{2}\,\mathrm{d}x$. According to condition $(V0)$, we have $0<M_1<\infty$. Thus, 
$$
\begin{aligned}
I_n\leq M_1\displaystyle\int_{B_1}|x|^{a_0} \widetilde{\omega}_n^{2}\,\mathrm{d}x&\leq M_1\displaystyle\log n\int_{0}^{1/n}r^{a_0+1}\mathrm{d}r+\frac{M_1}{\log n}\int_{1/n}^{1}\log^2\left(\frac{1}{r}\right)r^{a_0+1}\,\mathrm{d}r\\
&=M_1\frac{\log n}{n^{a_0+2}}\frac{1}{a_0+2}+\frac{M_1}{\log n}\int_{1/n}^{1}\log^2\left(\frac{1}{r}\right)r^{a_0+1}\,\mathrm{d}r=:\delta_n.
\end{aligned}
$$
Making the change of variable $t=\log\left(\frac{1}{r}\right)$ and integrating by parts twice, we get
\begin{align*}
\int_{1/n}^{1}\log^2\left(\frac{1}{r}\right)r^{a_0+1}\,\mathrm{d}r&=\displaystyle\int_{0}^{\log n}t^{2}e^{-(a_0+2)t}\,\mathrm{d}t\\
&=\frac{2}{(a_0+2)^3}-\frac{2}{(a_0+2)^3}\frac{1}{n^{a_0+2}}-\frac{2}{(a_0+2)^3}\frac{\log n}{n^{a_0+2}}-\frac{1}{a_0+2}\frac{\log^2n}{n^{a_0+2}}
\end{align*}
and therefore
$$
0<I_n\leq \delta_n=\frac{2M_1}{(a_0+2)^3}\frac{1}{\log n}-\frac{2M_1}{(a_0+2)^3}\frac{1}{n^{a_0+2}\log n}-\frac{2M_1}{(a_0+2)^3}\frac{1}{n^{a_0+2}}
$$
and
$$
\delta_n\log n\rightarrow \frac{2M_1}{(a_0+2)^3}.
$$
Moreover,
$$
\|\widetilde{\omega}_n\|^{2}=\displaystyle\int_{B_1}\left|\nabla \widetilde{\omega}_n\right|^{2}\mathrm{d}x+I_n\leq 1+\delta_n
$$
and this completes the proof.
\end{proof} 

\begin{Proposition}
If we also assume that $ \displaystyle\liminf_{r \to 0^+} Q(r)/r^{b_0} > 0$, then mountain pass level $c^{\star}$ satisfies the estimate
\begin{equation}\label{est-nivel}
c^{\star}<\frac{4-\mu}{\alpha_0}\left(1+\frac{2b_0}{4-\mu}\right)\frac{\pi}{2}.
\end{equation}
\end{Proposition}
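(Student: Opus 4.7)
The plan is to argue by contradiction, testing the minimax level against the Moser sequence $\widetilde{\omega}_n$ and exploiting the lower bound provided by $(f_5)$ together with the pointwise estimate $Q(|x|)\geq c_Q|x|^{b_0}$ for small $|x|$, which follows from the hypothesis $\liminf_{r\to 0^+}Q(r)/r^{b_0}>0$. Set $d:=\frac{4-\mu}{\alpha_0}(1+\frac{2b_0}{4-\mu})\frac{\pi}{2}$ and suppose, for contradiction, that $c^{\star}\geq d$. An argument analogous to Lemma~\ref{geometria}(ii), using $(f_5)$ in place of $(f_3)$ to force $J(t\widetilde{\omega}_n)\to-\infty$ as $t\to\infty$, shows that $t\mapsto J(t\widetilde{\omega}_n)$ attains its maximum at some $t_n>0$; picking a long enough segment along $\widetilde{\omega}_n$ as an admissible path in $\Gamma$ yields $c^{\star}\leq J(t_n\widetilde{\omega}_n)$, hence $J(t_n\widetilde{\omega}_n)\geq d$ for every $n$.

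The crucial step is a lower bound on the nonlocal term, obtained by restricting the Riesz double integral to $B_{1/n}\times B_{1/n}$. On this set $\widetilde{\omega}_n\equiv\sqrt{\log n/(2\pi)}$, so $(f_5)$ yields $F(t_n\widetilde{\omega}_n)\geq(\beta_0-\varepsilon)\,n^{\alpha_0 t_n^2/(2\pi)}$ for $n$ large, while $Q(|x|)\geq c_Q|x|^{b_0}$. After the change of variables $x=z/n$, $y=w/n$,
\[
\int_{\mathbb{R}^2}\!\!\int_{\mathbb{R}^2}\frac{Q(|x|)F(t_n\widetilde{\omega}_n(x))\,Q(|y|)F(t_n\widetilde{\omega}_n(y))}{|x-y|^\mu}\,\mathrm{d}x\,\mathrm{d}y\;\geq\;K(\beta_0-\varepsilon)^2\,n^{\alpha_0 t_n^2/\pi-(4-\mu+2b_0)},
\]
where $K=c_Q^2\int_{B_1}\!\!\int_{B_1}|z|^{b_0}|w|^{b_0}|z-w|^{-\mu}\,\mathrm{d}z\,\mathrm{d}w$ is finite and positive by the Hardy--Littlewood--Sobolev inequality; the integrability threshold is precisely $b_0>-(4-\mu)/2$, which is part of hypothesis $(Q0)$.

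Combining this estimate with $J(t_n\widetilde{\omega}_n)\geq d$ forces $t_n$ to be bounded: otherwise the exponent $\alpha_0 t_n^2/\pi-(4-\mu+2b_0)$ diverges and $J(t_n\widetilde{\omega}_n)\to-\infty$. More precisely, since the nonlocal term is nonnegative, $J(t_n\widetilde{\omega}_n)\geq d$ gives $t_n^2\|\widetilde{\omega}_n\|^2\geq 2d$, so $t_n^2\geq 2d/(1+\delta_n)$; any subsequential limit $t_n^2\to t_\star^2>2d$ would again make the exponent positive and blow up the nonlocal integral. Therefore $t_n^2\to 2d=\pi(4-\mu+2b_0)/\alpha_0$.

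Finally, invoking $\delta_n\log n\to C_1$ from the preceding lemma, the lower bound $t_n^2\geq 2d/(1+\delta_n)$ translates into
\[
n^{\alpha_0 t_n^2/\pi-(4-\mu+2b_0)}\;\geq\;\exp\!\bigl(-(4-\mu+2b_0)\delta_n\log n\,(1+o(1))\bigr)\;\longrightarrow\;e^{-(4-\mu+2b_0)C_1}>0,
\]
so the nonlocal term remains bounded below by a fixed positive constant $\eta$, while $\frac{1}{2}t_n^2\|\widetilde{\omega}_n\|^2\to d$. Hence $J(t_n\widetilde{\omega}_n)\leq d-\eta/2+o(1)<d$ for $n$ large, contradicting $J(t_n\widetilde{\omega}_n)\geq d$. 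The delicate point of the argument is the calibration of the exponent: one needs $\alpha_0 t_n^2/\pi-(4-\mu+2b_0)$ to tend to $0$ at exactly the rate $\delta_n$, so that $\delta_n\log n$ yields a finite, strictly positive limit factor. This is precisely the role played by the sharp asymptotic of $\|\widetilde{\omega}_n\|^2$ established in the preceding lemma, and is the main obstacle to carrying the proof through.
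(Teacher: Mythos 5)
Your argument is correct, but it follows a genuinely different route from the paper's. The paper works at the maximum point $t_n$ through the first-order condition $\frac{\mathrm{d}}{\mathrm{d}t}J(t\omega_n)|_{t=t_n}=0$, and therefore must control $f$ as well as $F$: it combines $(f_4)$ with $(f_5)$ to get $sf(s)F(s)\gtrsim s^{\vartheta+1}e^{2\alpha_0 s^2}$, which injects an extra factor $(\log n)^{(\vartheta+1)/2}$ into the lower bound on the nonlocal term; after showing $(t_n)$ is bounded, the contradiction comes from the divergence of $\tfrac{\vartheta+1}{2}\log(\log n)$ against the boundedness of $\delta_n\log n$. You never differentiate and never use $(f_4)$: you only use $J(t_n\widetilde{\omega}_n)\geq d$, the nonnegativity of the nonlocal term (giving $t_n^2\geq 2d/(1+\delta_n)$), and $(f_5)$ squared through the double convolution restricted to $B_{1/n}\times B_{1/n}$. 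The calibration $2\alpha_0\cdot\frac{2d}{2\pi}=4-\mu+2b_0$ together with $\delta_n\log n=O(1)$ then keeps the nonlocal term bounded below by a fixed $\eta>0$, while the blow-up alternative forces $\limsup t_n^2\leq 2d$, so the quadratic part tends to exactly $d$ and $J(t_n\widetilde{\omega}_n)<d$ for large $n$ — a cleaner contradiction. What each approach buys: yours is more elementary and shows that $(f_4)$ is not needed for the minimax estimate itself (it is still needed elsewhere in the proof of Theorem \ref{main-thm2}); the paper's route, at the cost of invoking $(f_4)$, extracts the logarithmically divergent term and in passing identifies $\lim t_n^2$ via the Euler equation for $t_n$. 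Two small points to tidy up in your write-up: the fact that $t\mapsto J(t\widetilde{\omega}_n)$ goes to $-\infty$ (so the maximum is attained) follows from $(f_2)$ exactly as in Lemma \ref{geometria}(ii), not from a replacement of $(f_3)$ by $(f_5)$; and the pointwise bound $F(t_n\widetilde{\omega}_n)\geq(\beta_0-\varepsilon)n^{\alpha_0 t_n^2/(2\pi)}$ on $B_{1/n}$ is legitimate only once $t_n\sqrt{\log n/(2\pi)}\geq R_\varepsilon$, which should be noted explicitly but is guaranteed by $t_n^2\geq 2d/(1+\delta_n)$ and $\log n\to\infty$. Neither affects the validity of the proof.
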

\begin{proof}
If we define \begin{equation}\label{norm-moser-seq}
\omega_n=\widetilde{\omega}_n/\sqrt{1+\delta_n},
\end{equation} 
then, by Lemma \ref{soledade}, $\|\widetilde{\omega}_n\|\leq 1$. To prove \eqref{est-nivel}, it suffices to show that there exists some $n_0\in\mathbb{N}$ such that
$$
\max_{t\geq0}\left\{\frac{t^{2}}{2}-I(t\omega_{n_0})\right\}<\frac{4-\mu}{\alpha_0}\left(1+\frac{2b_0}{4-\mu}\right)\frac{\pi}{2},
$$
where the functional $I$ was defined in Lemma \ref{geometria} and $w_{n_0}$ is the normalized Moser's sequence term given by \eqref{norm-moser-seq}. In fact, taking $e_\star=t_0\omega_{n_0}$, with $t_0>0$ sufficiently large such that $J(e_\star)<0$, and defining $\gamma_0(t):=(t_0\omega_{n_0})t$, then we have $\gamma_0\in \Gamma$ and
\begin{align*}
c^{\star}=\inf_{\gamma\in\Gamma}\max_{t\in[0,1]}J(\gamma(t))\leq \max_{t\in[0,1]}J(\gamma_0(t))\leq \max_{t\geq 0}J(t\omega_{n_0})&=\max_{t\geq0}\left\{\frac{t^{2}}{2}-I(t\omega_{n_0})\right\}\\
&<\frac{4-\mu}{\alpha_0}\left(1+\frac{2b_0}{4-\mu}\right)\frac{\pi}{2}.
\end{align*}
Arguing by contradiction, suppose that for each $n\in\mathbb{N}$ there is  $t_n>0$ such that
$$
\frac{t_n^2}{2}-\frac{1}{2}\int_{\mathbb{R}^2}\left[|x|^{-\mu}*(Q(|x|)F(t_n\omega_n))\right]Q(|x|)F(t_n\omega_n)\,\mathrm{d}x\geq\frac{4-\mu}{\alpha_0}\left(1+\frac{2b_0}{4-\mu}\right)\frac{\pi}{2}.
$$
Since $F(s)\geq0$ for all $s\in\mathbb{R}$, we have
\begin{equation}\label{bala1-nivel}
t_n^2\geq\frac{4-\mu}{\alpha_0}\left(1+\frac{2b_0}{4-\mu}\right)\frac{\pi}{2}.
\end{equation}
One can easily check that at $t=t_n$,
$$
\frac{\mathrm{d}}{\mathrm{d}t}\left(\frac{t^2}{2}-\frac{1}{2}\int_{\mathbb{R}^2}\left[|x|^{-\mu}*(Q(|x|)F(t\omega_n))\right]Q(|x|)F(t\omega_n)\,\mathrm{d}x\right)=0,
$$
or equivalently
\begin{equation}\label{bala2-nivel}
t_n^2=\int_{\mathbb{R}^2}\left[|x|^{-\mu}*(Q(|x|)F(t_n\omega_n))\right]Q(|x|)f(t_n\omega_n)t_n\omega_n\,\mathrm{d}x.
\end{equation}
From $(f_3)$ and $(f_4)$, for all $0<\varepsilon<\beta_0$, there exists $R=R(\varepsilon)>0$ such that
\begin{equation}\label{bala3-nivel}
sf(s)F(s)\geq M_0^{-1}(\beta_0-\varepsilon)s^{\vartheta+1}e^{2\alpha_0s^2},\quad\forall\ s\geq R.
\end{equation}
By the condition added concerning the behavior of $ Q $ near $ 0, $ there exists a positive constant $ C_2 $ such that 
\begin{equation}\label{behQ-origin}
Q(\left|x\right|) \geq C_2 \left|x\right|^{b_0},\ \forall\ 0 < \left|x\right| < r_0. 
\end{equation}
Thus, for $n$ large such that $B_{1/n}\subset B_{r_0}$, we obtain
\begin{align*}
t_n^2&\geq \int_{B_{1/n}}\int_{B_{1/n}}\frac{Q(|x|)F(t_n\omega_n)Q(|y|)f(t_n\omega_n)t_n\omega_n}{|x-y|^{\mu}}\,\mathrm{d}x\,\mathrm{d}y\nonumber\\
&\geq M_0^{-1}(\beta_0-\varepsilon)t_n^{\vartheta+1}\omega_n^{\vartheta+1}e^{2\alpha_0t_n^2\omega_n^2}\int_{B_{1/n}}Q(|x|)\,\mathrm{d}x\int_{B_{1/n}}\frac{Q(|y|)}{|x-y|^{\mu}}\,\mathrm{d}y\nonumber\\
&\geq M_0^{-1}(\beta_0-\varepsilon)t_n^{\vartheta+1}\omega_n^{\vartheta+1}e^{2\alpha_0t_n^2\omega_n^2}\left(\frac{n}{2}\right)^{\mu}\int_{B_{1/n}}\int_{B_{1/n}}Q(|x|)Q(|y|)\,\mathrm{d}x\,\mathrm{d}y\nonumber\\
&\geq C_3 M_0^{-1}(\beta_0-\varepsilon)t_n^{\vartheta+1}\omega_n^{\vartheta+1}e^{2\alpha_0t_n^2\omega_n^2}\int_{B_{1/n}}\int_{B_{1/n}}|x|^{b_0}|y|^{b_0}\,\mathrm{d}x\,\mathrm{d}y\nonumber\\
&=C_3 M_0^{-1}(\beta_0-\varepsilon)t_n^{\vartheta+1}\omega_n^{\vartheta+1}e^{2\alpha_0t_n^2\omega_n^2}\frac{2^{2-\mu}\pi^{2}}{(2+b_0)^{2}}\left(\frac{1}{n}\right)^{4+2b_0-\mu}\nonumber\\
&=C_3 M_0^{-1}(\beta_0-\varepsilon)\frac{2^{2-\mu}\pi^{2}}{(2+b_0)^{2}}t_n^{\vartheta+1}\left(\frac{\sqrt{\log n}}{\sqrt{2\pi}\sqrt{1+\delta_n}}\right)^{\vartheta+1}e^{\alpha_0\pi^{-1}t_n^2(1+\delta_n)^{-1}\log n}\left(\frac{1}{n}\right)^{4+2b_0-\mu}.
\end{align*}
Thus, we deduce
\begin{align}\label{bala4-nivel}
t_n^{2}\geq\ & C_3 M_0^{-1}(\beta_0-\varepsilon)\frac{2^{2-\mu}\pi^{2}}{(2+b_0)^{2}}\left(2\pi(1+\delta_n)\right)^{-\frac{\vartheta+1}{2}}t_n^{\vartheta+1}\nonumber\\
&\times e^{\left[\alpha_0\pi^{-1}t_n^2(1+\delta_n)^{-1}-(4+2b_0-\mu)\right]\log n+\frac{\vartheta+1}{2}\log\left(\log n\right)},
\end{align}
which yields that $(t_n)$ is bounded. Indeed, it follows from \eqref{bala4-nivel} that
\begin{align*}
t_n^{2}\geq \log t_n^{2}\geq & \log \left[C_3 M_0^{-1}(\beta_0-\varepsilon)\frac{2^{2-\mu}\pi^{2}}{(2+b_0)^{2}}\left(2\pi(1+\delta_n)\right)^{-\frac{\vartheta+1}{2}}\right]\\
&+\left[\alpha_0\pi^{-1}t_n^2(1+\delta_n)^{-1}-(4+2b_0-\mu)\right]\log n,
\end{align*}
and consequently
\begin{align}\label{bala5-nivel}
1\geq\ & \frac{1}{t_n^{2}}\log\left[C_3 M_0^{-1}(\beta_0-\varepsilon)\frac{2^{2-\mu}\pi^{2}}{(2+b_0)^{2}}\left(2\pi(1+\delta_n)\right)^{-\frac{\vartheta+1}{2}}\right]\nonumber\\
&+\left[\alpha_0\pi^{-1}(1+\delta_n)^{-1}-t_n^{-2}(4+2b_0-\mu)\right]\log n.
\end{align}
Thus, if $t_n\rightarrow+\infty$ as $n\rightarrow+\infty$, the right hand side of \eqref{bala5-nivel} goes to infinity when $n\rightarrow+\infty$, which leads a contradiction. Thereby, $(t_n)$ is bounded. Hence, passing to a subsequence if necessary, we infer that
$$
\lim_{n\rightarrow+\infty}t_n^{2}=\frac{4-\mu}{\alpha_0}\left(1+\frac{2b_0}{4-\mu}\right)\pi.
$$
Furthermore, as a by-product of \eqref{bala4-nivel}, there exists $C_4>0$ such that
$$
\left[\alpha_0\pi^{-1}t_n^2(1+\delta_n)^{-1}-(4+2b_0-\mu)\right]\log n+\frac{\vartheta+1}{2}\log\left(\log n\right)\leq C_4
$$
which can be rewritten as 
$$
\left[\frac{\alpha_0\pi^{-1}t_n^2-(4+2b_0-\mu)}{1+\delta_n}-\frac{\delta_n(4+2b_0-\mu)}{1+\delta_n}\right]\log n+\frac{\vartheta+1}{2}\log\left(\log n\right)\leq C_4
$$
and by using \eqref{bala1-nivel} we conclude 
$$
-\frac{(4+2b_0-\mu)}{1+\delta_n}\delta_n\log n+\frac{\vartheta+1}{2}\log\left(\log n\right)\leq C_4.
$$
Since, by Lemma \ref{soledade}, $\delta_n\log n\rightarrow 2M_1/(a_0+2)^3$ and $\log(\log n)\rightarrow +\infty$, we obtain a contradiction and this concludes the proof.
\end{proof}

Next we prove Theorem \ref{main-thm2}. For this aim, we will check that the limit of a Palais-Smale sequence for $J$ yields a weak solution to \eqref{P}.

\begin{proof}[Proof of Theorem \ref{main-thm2}]
From the mountain-pass theorem without Palais-Smale condition \cite{Ambro-Rabi} there exists a (PS)$_{c^{\star}}$ sequence $(v_n)\subset Y_{\mathrm{rad}}$ for $J$, i.e.,
$$
\frac{1}{2}\|v_n\|^2-\dfrac{1}{2}\int_{\mathbb{R}^2}\left[|x|^{-\mu}*(Q(|x|)F(v_n))\right]Q(|x|)F(v_n)\,\mathrm{d}x\rightarrow c^{\star}
$$
as well as
$$
\|v_n\|^2-\int_{\mathbb{R}^2}\left[|x|^{-\mu}*(Q(|x|)F(v_n))\right]Q(|x|)f(v_n)v_n\,\mathrm{d}x\rightarrow 0,
$$
as $n\rightarrow+\infty$. From Lemma \ref{limitacao-de-seq-ps}, $(v_n)$ is bounded in $Y_{\mathrm{rad}}$, then, up to  a subsequence, there exists $ v_\star \in Y_{\mathrm{rad}} $ such that $ v_n \rightharpoonup v_\star $ weakly in $ Y_{\mathrm{rad}}$. Moreover, from the boundedness of $(v_n)$ and the above convergences, we have
\begin{equation} \label{E:CC}
\int_{\mathbb{R}^2}\left[|x|^{-\mu}*(Q(|x|)F(v_n))\right]Q(|x|)F(v_n)\,\mathrm{d}x \leq C,\ \ \int_{\mathbb{R}^2}\left[|x|^{-\mu}*(Q(|x|)F(v_n))\right]Q(|x|)f(v_n)v_n\,\mathrm{d}x\leq C,
\end{equation}
with the positive constant $C$ independent of $n$. Next, we split the proof in some claims.

\begin{Claim}\label{af1}
$$
\int_{\mathbb{R}^2}\left[|x|^{-\mu}*(Q(|x|)F(v_n))\right]Q(|x|)F(v_n)\,\mathrm{d}x\rightarrow\int_{\mathbb{R}^2}\left[|x|^{-\mu}*(Q(|x|)F(v_\star))\right]Q(|x|)F(v_\star)\,\mathrm{d}x,
$$
as $n\rightarrow+\infty$.
\end{Claim}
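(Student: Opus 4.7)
The plan is to argue via the Hardy--Littlewood--Sobolev inequality, reducing Claim \ref{af1} to two facts: (a) a uniform bound on $\|Q(|\cdot|)F(v_n)\|_{\frac{4}{4-\mu}}$, and (b) strong convergence $Q(|\cdot|)F(v_n)\to Q(|\cdot|)F(v_\star)$ in $L^{\frac{4}{4-\mu}}(\mathbb{R}^2)$. Indeed, by bilinearity and HLS with $t=r=\frac{4}{4-\mu}$,
\[
\left|\int_{\mathbb{R}^2}\!\bigl[|x|^{-\mu}*QF(v_n)\bigr]QF(v_n)\,\mathrm{d}x-\int_{\mathbb{R}^2}\!\bigl[|x|^{-\mu}*QF(v_\star)\bigr]QF(v_\star)\,\mathrm{d}x\right|\leq C\,\|QF(v_n)-QF(v_\star)\|_{\frac{4}{4-\mu}}\bigl(\|QF(v_n)\|_{\frac{4}{4-\mu}}+\|QF(v_\star)\|_{\frac{4}{4-\mu}}\bigr),
\]
so (a) and (b) imply the claim.

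For (a) I would combine the growth estimate \eqref{cresc-F} with the elementary inequality $(e^{y}-1)^{\beta}\leq e^{\beta y}-1$ for $\beta\geq 1$, $y\geq 0$, to dominate
\[
Q^{\frac{4}{4-\mu}}|F(v_n)|^{\frac{4}{4-\mu}}\leq C\,Q^{\frac{4}{4-\mu}}v_n^{2}+C\,Q^{\frac{4}{4-\mu}}|v_n|^{\frac{4p}{4-\mu}}\bigl(e^{\frac{4\alpha}{4-\mu}v_n^{2}}-1\bigr).
\]
The polynomial piece would be controlled by Corollary \ref{tec3} with $q=\frac{4-\mu}{2}$, while the exponential piece would be treated by H\"older with exponent $q>1$ close to $1$, producing one factor controlled by Corollary \ref{tec3} and the other by Proposition \ref{TM}, as long as $\frac{4q\alpha}{4-\mu}\|v_n\|^{2}<4\pi\bigl(1+\frac{2b_0}{4-\mu}\bigr)$.

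For (b), Lemma \ref{imersao-principal} gives, up to a subsequence, $v_n\to v_\star$ a.e. in $\mathbb{R}^2$, and by continuity of $F$ we get $QF(v_n)\to QF(v_\star)$ a.e. I would then upgrade this to strong $L^{\frac{4}{4-\mu}}$ convergence via Vitali's theorem: tightness at infinity follows from Lemma \ref{lema-radial} and the decay hypotheses $(V0)$--$(Q0)$ exactly as in the proof of Lemma \ref{tec1} and Corollary \ref{tec2}, while uniform absolute continuity of the integrals on bounded subsets is secured by repeating the estimates of step (a) over the smaller domains.

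The hard part will be the exponential integrability underlying (a). The bound from Lemma \ref{limitacao-de-seq-ps} combined with the minimax hypothesis $c^\star<\frac{4-\mu}{\alpha_0}\bigl(1+\frac{2b_0}{4-\mu}\bigr)\frac{\pi}{2}$ only yields $\|v_n\|^{2}<\frac{\theta}{\theta-1}\cdot\frac{\pi(4-\mu+2b_0)}{\alpha_0}$, which sits tight against (and, depending on $\theta$ and $\mu$, potentially above) the Trudinger--Moser threshold $4\pi\bigl(1+\frac{2b_0}{4-\mu}\bigr)$ of Proposition \ref{TM}. Consequently the choices of $\alpha\downarrow\alpha_0$ and $q\downarrow 1$ must be coordinated very delicately, exploiting whatever slack the strict inequality on $c^\star$ provides. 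If this approach proves too tight, an alternative is to split $\mathbb{R}^2=B_R\cup B_R^c$: on $B_R$ one invokes Remark \ref{tec0} and a local singular Trudinger--Moser inequality, while on $B_R^c$ Lemma \ref{lema-radial} forces $v_n^{2}$ to decay, automatically rendering the critical exponential term subcritical there. A complementary device is to use conditions $(f_4)$ and $(f_5)$ to trade pointwise bounds on $F(v_n)$ for bounds on $v_n f(v_n)/v_n^{\vartheta+1}$ in the region where $v_n$ is large, converting part of the exponential growth into quantities already controlled by the Palais--Smale information and \eqref{E:CC}.
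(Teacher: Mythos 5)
Your main route has a genuine gap at step (a), and you have in fact put your finger on it yourself without resolving it. At this stage of the proof of Theorem \ref{main-thm2} the only norm information is Lemma \ref{limitacao-de-seq-ps}, $\|v_n\|^{2}\leq \frac{2\theta}{\theta-1}c^{\star}+o_n(1)$, and the level estimate \eqref{est-nivel} gives $c^{\star}<\frac{4-\mu}{\alpha_0}\left(1+\frac{2b_0}{4-\mu}\right)\frac{\pi}{2}$ \emph{without} the factor $\frac{\theta-1}{\theta}$ that appears in $c_0$; hence the available bound is $\|v_n\|^{2}<\frac{\theta}{\theta-1}\cdot\frac{(4-\mu)\pi}{\alpha_0}\left(1+\frac{2b_0}{4-\mu}\right)$, which exceeds the Trudinger--Moser threshold $\frac{(4-\mu)\pi}{\alpha_0}\left(1+\frac{2b_0}{4-\mu}\right)$ needed to make $\frac{4q\alpha}{4-\mu}\|v_n\|^{2}<4\pi\left(1+\frac{2b_0}{4-\mu}\right)$ for $q>1$, $\alpha>\alpha_0$. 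So no amount of coordinating $\alpha\downarrow\alpha_0$ and $q\downarrow 1$ closes this; the uniform $L^{\frac{4}{4-\mu}}$ bound on $Q F(v_n)$ cannot be obtained from Proposition \ref{TM} here (this is exactly why the paper only uses that mechanism in Proposition \ref{ps}, under $c<c_0$, and again in the final claim after assuming $v_\star\equiv 0$, when \eqref{cajazeiras} puts $\|v_n\|^2$ below threshold). Your step (b), strong convergence of $QF(v_n)$ in $L^{\frac{4}{4-\mu}}(\mathbb{R}^2)$ via Vitali, suffers from the same defect: the uniform absolute continuity on bounded sets is precisely the exponential equi-integrability you cannot prove, and it is not repaired by splitting $\mathbb{R}^2=B_R\cup B_R^{c}$, since Lemma \ref{lema-radial} only tames the exponential outside a large ball while the obstruction lives inside it.

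The paper's proof avoids Trudinger--Moser entirely for this claim, and your last, unexecuted ``complementary device'' is in fact its core. From the (PS) property one gets the uniform bounds \eqref{E:CC} on both $\int\left[|x|^{-\mu}*(QF(v_n))\right]QF(v_n)$ and $\int\left[|x|^{-\mu}*(QF(v_n))\right]Qf(v_n)v_n$. Condition $(f_4)$ gives $F(v_n)\leq \frac{M_0}{M^{\vartheta+1}}\,v_n f(v_n)$ on $\{v_n\geq M\}$ with $M\geq s_0$, so the contribution of $\{|v_n|\geq M\}$ (and, by a second truncation at level $K$, of $\{|v_n|>K\}$ inside the convolution) is uniformly small of order $\delta$ once $M,K$ are large; the same holds for $v_\star$ by integrability of its nonlocal term. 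On the remaining regions, where $|v_n|\leq M$ and $|v_n|\leq K$, $(f_1)$ yields $F(s)\leq C_{M,K}|s|^{\frac{4-\mu}{2}}$, and the limit is passed using $v_n\to v_\star$ in $L^{q}_{\mathrm{loc}}$, the Hardy--Littlewood--Sobolev inequality and dominated convergence. In short: the claim is proved by a double level-set truncation powered by $(f_4)$ and the Palais--Smale bounds, not by exponential equi-integrability; as written, your argument would fail at its central step.
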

\noindent\textbf{Verification:} We have seen soon after Remark \ref{obs} that if $v_\star\in Y_{\mathrm{rad}}$, then 
$$
\left[|x|^{-\mu}*(Q(|x|)F(v_\star))\right]Q(|x|)F(v_\star)\in L^{1}(\mathbb{R}^{2}).
$$
Thus,
$$
\lim_{M\rightarrow+\infty}\int_{\{|v_\star|\geq M\}}\left[|x|^{-\mu}*(Q(|x|)F(v_\star))\right]Q(|x|)F(v_\star)\,\mathrm{d}x=0.
$$
Let $C>0$ and $\vartheta\in (0,1], M_0, s_0>0$ be the constants given by \eqref{E:CC} and $(f_4)$ respectively. From the above limit, for any $\delta>0$ given, we can choose $M>\max\{ \left(CM_0/\delta\right)^{\frac{1}{\vartheta+1}},s_0\}$ such that
$$
0\leq \int_{\{|v_\star|\geq M\}}\left[|x|^{-\mu}*(Q(|x|)F(v_\star))\right]Q(|x|)F(v_\star)\,\mathrm{d}x < \delta.
$$
By using \eqref{E:CC}, $(f_4)$ and the choice on $M$, we derive
\begin{align*}
0& \leq \int_{\{|v_n|\geq M\}}\left[|x|^{-\mu}*(Q(|x|)F(v_n))\right]Q(|x|)F(v_n)\,\mathrm{d}x \\
&\leq M_0/M^{\vartheta+1} \int_{\{|v_n|\geq M\}}\left[|x|^{-\mu}*(Q(|x|)F(v_n))\right]Q(|x|)v_n f(v_n)\,\mathrm{d}x <\delta.
\end{align*}
This way, for any $\Omega\subset\subset \mathbb R^2$, we have
\begin{multline*}
\left| \int_{\Omega}\left[|x|^{-\mu}*(Q(|x|)F(v_n))\right]Q(|x|)F(v_n)\,\mathrm{d}x - \int_{\Omega}\left[|x|^{-\mu}*(Q(|x|)F(v_\star))\right]Q(|x|)F(v_\star)\,\mathrm{d}x
\right|\\
\leq 2\delta
+\left| \int_{\Omega_{M,v_n}}\left[|x|^{-\mu}*(Q(|x|)F(v_n))\right]Q(|x|)F(v_n)\,\mathrm{d}x \right.\\
-\left. \int_{\Omega_{M,v_\star}}\left[|x|^{-\mu}*(Q(|x|)F(v_\star))\right]Q(|x|)F(v_\star)\,\mathrm{d}x
\right|,
\end{multline*}
where $\Omega_{M,v}:=\Omega\cap \{ |v|\leq M\}$, $v\in Y_{\mathrm{rad}}$. At this point, the proof reduces to establishing that
\begin{equation}\label{E:Campina}
\int_{\Omega_{M,v_n}}\left[|x|^{-\mu}*(Q(|x|)F(v_n))\right]Q(|x|)F(v_n)\,\mathrm{d}x \to \int_{\Omega_{M,v_\star}}\left[|x|^{-\mu}*(Q(|x|)F(v_\star))\right]Q(|x|)F(v_\star)\,\mathrm{d}x,
\end{equation}
as $n\to\infty$, for any fixed $M>\max\{ \left(CM_0/\delta\right)^{\frac{1}{\vartheta+1}},s_0\}$. For this,
we begin noticing that as $K\to\infty$, 
\begin{multline*}
\int_{|v_\star|\leq M}\int_{|v_\star|\leq K} \left[ \frac{Q(|y|)F(v_\star(y))}{|x-y|^\mu}\right]\mathrm{d}y~ Q(|x|)F(v_\star(x))\chi_\Omega \,\mathrm{d}x\\
\longrightarrow \int_{|v_\star|\leq M}\left[ |x|^{-\mu}\ast Q(|x|)F(v_\star)\right]~ Q(|x|)F(v_\star)\chi_\Omega \,\mathrm{d}x,
\end{multline*}
where $\chi_\Omega$ is the characteristic function corresponding to the set $\Omega$. Choose $K>\max\left\{ \left(CM_0/\delta\right)^{\frac{1}{\vartheta+1}},s_0\right\}$ such that
$$
\int_{|v_\star|\leq M}\int_{|v_\star|> K} \left[ \frac{Q(|y|)F(v_\star(y))}{|x-y|^\mu}\right]\mathrm{d}y~Q(|x|) F(v_\star(x))\,\mathrm{d}x<\delta.
$$
Considering \eqref{E:CC}, $(f_4)$ and the choice on $K$, it follows that
\begin{align*}
&\int_{|v_n|\leq M}\int_{|v_n|> K} \left[ \frac{Q(|y|)F(v_n(y))}{|x-y|^\mu}\right]\mathrm{d}y~ Q(|x|)F(v_n(x))\chi_\Omega\,\mathrm{d}x \\ 
&\leq M_0/K^{\vartheta+1} \int_{|v_n|\leq M}\int_{|v_n|> K} \left[ \frac{Q(|y|)v_n(y)f(v_n(y))}{|x-y|^\mu}\right]\mathrm{d}y~ Q(|x|)F(v_n(x))\chi_\Omega\,\mathrm{d}x\\
&\leq M_0/K^{\vartheta+1} \int_{|v_n|\leq M}\int_{|v_n|> K} \left[ \frac{Q(|y|)v_n(y)f(v_n(y))}{|x-y|^\mu}\right]\mathrm{d}y~ Q(|x|) F(v_n(x))\,\mathrm{d}x\\
&\leq M_0/K^{\vartheta+1} \int_{\mathbb R^2}\int_{\mathbb R^2} \left[ \frac{Q(|y|)F(v_n(y))}{|x-y|^\mu}\right]\mathrm{d}y~ Q(|x|) v_n(x)f(v_n(x))\,\mathrm{d}x\\
&= M_0/K^{\vartheta+1} \int_{\mathbb{R}^2}\left[|x|^{-\mu}*(Q(|x|)F(v_n))\right]Q(|x|)f(v_n)v_n\,\mathrm{d}x<\delta.
\end{align*}
Therefore, we obtain
\begin{multline*}
\left|\int_{|v_n|\leq M}\int_{|v_n|> K} \left[ \frac{Q(|y|)F(v_n(y))}{|x-y|^\mu}\right]\mathrm{d}y~ Q(|x|)F(v_n(x))\chi_\Omega\,\mathrm{d}x \right. \\ 
- \left.\int_{|v_\star|\leq M}\int_{|v_\star|> K} \left[ \frac{Q(|y|)F(v_\star(y))}{|x-y|^\mu}\right]\mathrm{d}y~Q(|x|) F(v_\star(x))\chi_\Omega\,\mathrm{d}x 
\right|<2\delta.
\end{multline*}
So, \eqref{E:Campina} will be proved if
\begin{multline} \label{E:Puxinana}
\left|\int_{|v_n|\leq M}\int_{|v_n|\leq K} \left[ \frac{Q(|y|)F(v_n(y))}{|x-y|^\mu}\right]\mathrm{d}y~ Q(|x|)F(v_n(x))\chi_\Omega\,\mathrm{d}x \right. \\ 
- \left.\int_{|v_\star|\leq M}\int_{|v_\star|\leq K} \left[ \frac{Q(|y|)F(v_\star(y))}{|x-y|^\mu}\right]\mathrm{d}y~Q(|x|) F(v_\star(x))\chi_\Omega\,\mathrm{d}x 
\right|\to 0,
\end{multline}
as $n\to\infty$, for any fixed $K,M>0$. In fact, 
\begin{multline*}
\int_{|v_n|\leq K} \left[ \frac{Q(|y|)F(v_n(y))}{|x-y|^\mu}\right]\mathrm{d}y ~ Q(|x|)F(v_n(x))\chi_\Omega\,\mathrm{d}x\\
\longrightarrow \int_{|v_\star|\leq K} \left[ \frac{Q(|y|)F(v_\star(y))}{|x-y|^\mu}\right]\mathrm{d}y ~ Q(|x|)F(v_\star(x))\chi_\Omega\,\mathrm{d}x
\end{multline*}
and, by $(f_1)$, we know there exists a constant $C_{M,K}$ depending on $M$ and $K$ such that
$$
\begin{aligned}
\int_{|v_n|\leq K} \left[ \frac{Q(|y|)F(v_n(y))}{|x-y|^\mu}\right]&\mathrm{d}y ~ Q(|x|)F(v_n(x))\chi_\Omega\,\mathrm{d}x\\
&\leq C_{M,K} \int_{|v_n|\leq K} \left[ \frac{Q(|y|)|v_n(y)|^{\frac{4-\mu}{2}}}{|x-y|^\mu}\right]\mathrm{d}y~ Q(|x|)|v_n(x)|^{\frac{4-\mu}{2}}\chi_\Omega\,\mathrm{d}x\\
&\leq C_{M,K} \left[ |x|^{-\mu}*v_n^{\frac{4-\mu}{2}}\right] Q(|x|)|v_n|^{\frac{4-\mu}{2}}\chi_\Omega\,\mathrm{d}x.
\end{aligned}
$$
Since $v_n\to v_\star$ in $L^q_{\textrm{loc}}(\mathbb R^2)$, for all $q\geq 1$, we reach
$$
\left[ |x|^{-\mu}*v_n^{\frac{4-\mu}{2}}\right] Q(|x|)|v_n|^{\frac{4-\mu}{2}}\chi_\Omega\longrightarrow \left[ |x|^{-\mu}*v_\star^{\frac{4-\mu}{2}}\right] Q(|x|)|v_\star|^{\frac{4-\mu}{2}}\chi_\Omega,
$$
as $n\to\infty$, where we have used the Hardy-Littlewood-Sobolev inequality.  Thus, by Lebesgue's Dominated Convergence Theorem, this finally yields \eqref{E:Puxinana}. 

\begin{Claim} 
$v_\star$ is a weak solution to \eqref{P}.
\end{Claim}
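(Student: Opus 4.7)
The plan is to verify \eqref{weak-sol-def} for $v_\star$ by first establishing that $J'(v_\star)\psi=0$ for every radial test function $\psi\in C_0^\infty(\mathbb{R}^2)$ (hence for every $\psi\in Y_{\mathrm{rad}}$, via Proposition~\ref{tec5} and density), and then invoking the natural-constraint proposition of Section~3 --- whose proof exploits the $O(2)$-invariance of $J$ and the Riesz Representation Theorem on $Y$ --- to upgrade the identity to arbitrary $\phi\in C_0^\infty(\mathbb{R}^2)$.

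Starting from $J'(v_n)\psi=o_n(1)$, the linear part $\int(\nabla v_n\cdot\nabla\psi+V(|x|)v_n\psi)\,\mathrm{d}x$ passes to the limit immediately by the weak convergence $v_n\rightharpoonup v_\star$ in $Y_{\mathrm{rad}}$. For the nonlocal term, after passing to a subsequence Lemma~\ref{imersao-principal} combined with Remark~\ref{tec0} gives $v_n\to v_\star$ a.e.\ in $\mathbb{R}^2$ and in $L^q_{\mathrm{loc}}(\mathbb{R}^2)$ for every $q\geq 1$, so $F(v_n)\to F(v_\star)$ and $f(v_n)\to f(v_\star)$ pointwise. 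Rewriting the nonlocal expression as the double integral
$$
\int\!\!\int\frac{Q(|y|)F(v_n(y))\,Q(|x|)f(v_n(x))\psi(x)}{|x-y|^{\mu}}\,\mathrm{d}x\,\mathrm{d}y,
$$
I would mirror the truncation scheme of Claim~\ref{af1}: given $\delta>0$, condition $(f_4)$ yields a threshold $M$ with $F(s)\leq (M_0/M^{\vartheta})f(s)$ for $|s|\geq M$, and together with the a priori bound \eqref{E:CC} on $\int[|\cdot|^{-\mu}*(QF(v_n))]Q f(v_n)v_n\,\mathrm{d}x$ this makes the contribution from $\{|v_n(y)|\geq M\}$ uniformly smaller than $\delta$; the symmetric contribution from $\{|v_n(x)|\geq M\}$ is handled likewise, absorbing the extra factor $\psi$ via its $L^\infty$ bound and compact support.

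On the remaining doubly truncated region $\{|v_n|<M\}$ both $F(v_n)$ and $f(v_n)$ are pointwise bounded, and an integrable majorant of the form $Q(|y|)|v_n(y)|^{(4-\mu)/2}\,Q(|x|)|v_n(x)|^{(4-\mu)/2}|\psi(x)|/|x-y|^{\mu}$ (coming from $(f_1)$ and \eqref{cresc-F} applied in the polynomial regime) is provided by Hardy--Littlewood--Sobolev together with Corollary~\ref{tec3} applied to the $L^q_{\mathrm{loc}}$-convergent sequence $v_n$. Lebesgue's Dominated Convergence Theorem then gives convergence of the truncated double integral to its $v_\star$ counterpart, and letting $\delta\to 0$ completes the identification of the nonlocal term.

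The main obstacle is precisely that the exponential critical growth of $f$ rules out any single global $L^1$ majorant: one must first shave off the tail $\{|v_n|\geq M\}$ by the asymmetric use of $(f_4)$ to dominate $F$ by $f$ and by the uniform Palais--Smale bound \eqref{E:CC}, and only then is DCT applicable on the bounded part. The double truncation (in both the convolution variable $y$ and the test variable $x$) is the delicate technical step and is what forces the appearance of the extra hypothesis $(f_4)$ compared with the setting of Theorem~\ref{main-thm}.
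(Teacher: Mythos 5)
Your overall scheme (pass to the limit in $J'(v_n)\psi$, handle the linear part by weak convergence, then extend from radial test functions by the symmetric-criticality proposition) is reasonable, but the central step --- convergence of the nonlocal term --- has a genuine gap in the tail estimate for the \emph{convolution} (inner) variable. Your truncation in the outer variable is fine: on $\{v_n(x)\geq M\}$ one has $f(v_n)|\psi|\leq \|\psi\|_\infty f(v_n)v_n/M$ and \eqref{E:CC} makes that piece uniformly small. But for the inner variable, after applying $(f_4)$ on $\{v_n(y)\geq K\}$ you are left with the pairing
$$
\int\!\!\int\frac{Q(|y|)v_n(y)f(v_n(y))\,Q(|x|)f(v_n(x))|\psi(x)|}{|x-y|^{\mu}}\,\mathrm{d}y\,\mathrm{d}x ,
$$
and this is \emph{not} controlled by \eqref{E:CC}: the bound \eqref{E:CC} concerns the pairing of $QF(v_n)$ with $Qv_nf(v_n)$, and the symmetry trick that makes this work in Claim \ref{af1} is available there only because the outer density is again $QF(v_n)$. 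Here the outer density is $Qf(v_n)\psi$, which (even after the outer truncation, where it is just $\leq C_M\|\psi\|_\infty Q\chi_{\operatorname{supp}\psi}$) cannot be compared from below with $F(v_n)$, so no rearrangement brings you back to \eqref{E:CC}. Nor can you fall back on Hardy--Littlewood--Sobolev plus the Trudinger--Moser inequality to get equi-integrability of $Q^{4/(4-\mu)}F(v_n)^{4/(4-\mu)}$: in the setting of Theorem \ref{main-thm2} the Palais--Smale bound only gives $\|v_n\|^2\leq \frac{2\theta}{\theta-1}c^{\star}+o_n(1)$, which need not lie below the threshold $\frac{4-\mu}{\alpha_0}\bigl(1+\frac{2b_0}{4-\mu}\bigr)\pi$ (the level estimate \eqref{est-nivel} is weaker than $c^\star<c_0$), so the argument of Proposition \ref{ps} is unavailable --- this is precisely why the direct truncation-plus-DCT route does not close. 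A secondary issue: your ``majorant'' $Q(|y|)|v_n(y)|^{(4-\mu)/2}Q(|x|)|v_n(x)|^{(4-\mu)/2}|\psi(x)|/|x-y|^\mu$ depends on $n$, so plain dominated convergence does not apply as stated; one needs a generalized DCT/Vitali-type argument, and that again presupposes the uniform control you are missing.

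The paper's proof of this claim takes a different route designed exactly to avoid this obstruction. It first tests $J'(v_n)$ against $w_n=\varphi/(1+v_n)$ (with $\varphi$ a cut-off), which yields the uniform local bound \eqref{E:lagoasequinha} on $\int_\Omega[|x|^{-\mu}*(QF(v_n))]Q\,\frac{f(v_n)}{1+v_n}$; combined with \eqref{E:CC} via the splitting $\{v_n<1\}\cup\{v_n\geq1\}$ this gives $\|\xi_n\|_{L^1(\Omega)}\leq C(\Omega)$ for $\xi_n=[|x|^{-\mu}*(QF(v_n))]Qf(v_n)$. Then the measures $\xi_n\,\mathrm{d}x$ converge weakly-$*$ to a measure $\mu$, which is shown to be absolutely continuous (using the PS relation), and Radon--Nikod\'ym plus the a.e.\ convergence $v_n\to v_\star$ identifies the density as $[|x|^{-\mu}*(QF(v_\star))]Qf(v_\star)$. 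To repair your proof you would need either this test-function/measure device or some other mechanism producing a uniform local $L^1$ (or equi-integrability) bound on $\xi_n$; $(f_4)$ together with \eqref{E:CC} alone does not provide it.
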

\noindent\textbf{Verification:} Let us now prove that the weak limit $v_\star$ yields actually a weak solution to \eqref{P}, namely that
$$
\int_{\mathbb{R}^2}(\nabla v_\star\cdot\nabla\phi+V(|x|)v_\star\phi)\,\mathrm{d}x-\int_{\mathbb{R}^2}\left[|x|^{-\mu}*(Q(|x|)F(v_\star))\right]Q(|x|)f(v_\star)\phi\,\mathrm{d}x=0,
$$
for all $\phi\in C^{\infty}_0(\mathbb{R}^2)$. Since $(v_n)$ is a (PS)$_{c^\star}$ sequence, for all  $\phi\in C^{\infty}_0(\mathbb{R}^2)$ we know that
$$
\int_{\mathbb{R}^2}(\nabla v_n\cdot\nabla\phi+V(|x|)v_n\phi)\,\mathrm{d}x-\int_{\mathbb{R}^2}\left[|x|^{-\mu}*(Q(|x|)F(v_n))\right]Q(|x|)f(v_n)\phi\,\mathrm{d}x=o_n(1).
$$
Since $v_n\rightharpoonup v_\star$, as $n\to\infty$,  in $Y_{\textrm{rad}}$, it is therefore enough to show that
$$
\int_{\mathbb{R}^2}\left[|x|^{-\mu}*(Q(|x|)F(v_n))\right]Q(|x|)f(v_n)\phi\,\mathrm{d}x\to \int_{\mathbb{R}^2}\left[|x|^{-\mu}*(Q(|x|)F(v_\star))\right]Q(|x|)f(v_\star)\phi\,\mathrm{d}x,
$$
for all $\phi\in C^{\infty}_0(\mathbb{R}^2)$.

Let $\Omega$ be any compact subset of $\mathbb R^2$. We claim that there exists a constant $C(\Omega)$ such that
\begin{equation} \label{E:lagoasequinha}
\int_{\Omega}\left[|x|^{-\mu}*(Q(|x|)F(v_n))\right]Q(|x|)\frac{f(v_n)}{1+v_n}\,\mathrm{d}x\leq C(\Omega).
\end{equation}
Indeed, let 
$$
w_n=\frac{\varphi}{1+v_n},~n\geq 1,
$$
where $\varphi$ is a smooth function compactly supported in $\Omega' \supset\Omega$, $\Omega'$ compact, such that $0\leq \varphi\leq 1$ and $\varphi\equiv 1$ in $\Omega$.  Straightforward computations show that
\begin{align*}
\|w_n\|^2&= \int_{\mathbb{R}^2}\left(\left|\nabla w_n\right|^2+V(|x|)w_n^2\right)\mathrm{d}x\\ 
&=\int_{\mathbb{R}^2}\left(\left|\frac{\nabla \varphi}{1+v_n}-\frac{\varphi\nabla v_n}{(1+v_n)^2}\right|^2+V(|x|)\frac{\varphi^2}{(1+v_n)^2}\right)\mathrm{d}x\\ 
&\leq  \int_{\mathbb{R}^2}\left( \frac{|\nabla \varphi|^2}{(1+v_n)^2}-2\varphi\frac{\nabla\varphi\cdot\nabla v_n}{(1+v_n)^3}+\varphi^2\frac{|\nabla v_n|^2}{(1+v_n)^4}+ V(|x|)\varphi^2 \right)\mathrm{d}x\\ 
&\leq 2\left(\|\varphi\|^2+\|v_n\|^2\right),
\end{align*}
which establishes $w_n\in Y_{\textrm{rad}}$. Since $J'(v_n)\to 0$, as $n\to\infty$, we get $$J'(v_n)w_n\leq \tau_n\|w_n\|,$$ where $\tau_n\to 0$, as $n\to \infty$, that is,
\begin{align*}
\int_{\Omega}\left[|x|^{-\mu}*(Q(|x|)F(v_n))\right]Q(|x|)\frac{f(v_n)}{1+v_n}\,\mathrm{d}x&\leq 
\int_{\mathbb R^2}\left[|x|^{-\mu}*(Q(|x|)F(v_n))\right]Q(|x|)f(v_n) w_n\,\mathrm{d}x\\
& \leq \int_{\mathbb R^2}\left( \nabla v_n\cdot \nabla w_n+ V(|x|) v_n w_n\right) \mathrm{d}x + \tau_n \|w_n\|.
\end{align*}
Then,
\begin{align*}
&\int_{\Omega}\left[|x|^{-\mu}*(Q(|x|)F(v_n))\right]Q(|x|)\frac{f(v_n)}{1+v_n}\,\mathrm{d}x\\
&\leq \int_{\mathbb R^2}\left(|\nabla v_n|^2\frac{\varphi}{(1+v_n)^2}+\frac{\nabla v_n\cdot \nabla \varphi}{1+v_n} +V(|x|)v_n\frac{\varphi}{1+v_n}\right)\,\mathrm{d}x+\sqrt{2}\tau_n\left(\|\varphi\|+\|v_n\|\right) \\
&\leq \|\nabla v_n\|_2^2+\|\nabla \varphi\|_2\|\nabla v_n\|_ 2+\int_{\Omega'}  V(|x|)v_n\,\mathrm{d}x +\sqrt{2}\tau_n\left(\|\varphi\|+\|v_n\|\right)\\
&\leq \|\nabla v_n\|_2^2+\|\nabla \varphi\|_2\|\nabla v_n\|_ 2+\|v_n\|\left(\int_{B_R} V(|x|)\,\mathrm{d}x\right)^{\frac{1}{2}}+\sqrt{2}\tau_n\left(\|\varphi\|+\|v_n\|\right),
\end{align*}
where $B_R$ is a ball in $\mathbb R^2$ that contains $\Omega'$ and  $(V0)$ jointly with H\"older's inequality were used in the last step above. Since $(v_n)$  bounded in $Y_{\mathrm{rad}}$, no further work is required for reach which was claimed.

Now, setting
$$
\xi_n=\left[|x|^{-\mu}*(Q(|x|)F(v_n))\right]Q(|x|)f(v_n),~n\geq 1,
$$
we can notice that
\begin{align*}
\int_{\Omega} \xi_n\,\mathrm{d}x &\leq  2 \int_{\{v_n<1\}\cap \Omega}\left[|x|^{-\mu}*(Q(|x|)F(v_n))\right]Q(|x|)\frac{f(v_n)}{1+v_n}\,\mathrm{d}x\\
& ~~+ \int_{\{v_n\geq 1\}\cap \Omega}\left[|x|^{-\mu}*(Q(|x|)F(v_n))\right]Q(|x|)v_n f(v_n)\,\mathrm{d}x\\
 &\leq  2 \int_{\Omega}\left[|x|^{-\mu}*(Q(|x|)F(v_n))\right]Q(|x|)\frac{f(v_n)}{1+v_n}\,\mathrm{d}x\\
& ~~+ \int_{\mathbb R^2}\left[|x|^{-\mu}*(Q(|x|)F(v_n))\right]Q(|x|)v_n f(v_n)\,\mathrm{d}x, ~\forall n\geq 1.
\end{align*}
Combining \eqref{E:lagoasequinha} and \eqref{E:CC}, we obtain
$$
\int_{\Omega} \xi_n\,\mathrm{d}x \leq  2 C(\Omega)+C, ~\forall\ n\geq 1.
$$
Now, consider the sequence of measures $\mu_n$ given by
$$
\mu_n(E)=\int_{E}\xi_n\,\mathrm{d}x,~n\geq 1,
$$
for each measurable set $E$. Since $\|\xi_n\|_1\leq C_1(\Omega)$, for all $n\geq 1$ and $\Omega$ is bounded, the measures $\mu_n$ have uniformly bounded total variation. Then, by $\text{weak}^*$-compactness, passing to a subsequence, there exists a measure $\mu$ such that $\mu_n{\rightharpoonup}^* \mu$, that is,
$$
\int _\Omega \xi_n\phi\,\mathrm{d}x\to \int _\Omega \phi\,\mathrm{d}\mu, ~n\to\infty,~\forall \phi\in C^{\infty}_0(\Omega).
$$
But then, since $(v_n)$ is a (PS)$_{c^\star}$ sequence, it follows that
$$
\int _{\mathbb R^2} \left( \nabla v_n\cdot \nabla \phi + V(|x|)v_n\phi  \right)\mathrm{d}x \to \int _\Omega \phi\,\mathrm{d}\mu, ~n\to\infty,~\forall \phi\in C^{\infty}_0(\Omega),
$$ 
which implies $\mu$ is absolutely continuous with respect to the Lebesgue measure. So, by using the Radon-Nikod\'ym Theorem, we get  $\xi\in L^1(\Omega)$ such that
$$
\int _\Omega \phi\,\mathrm{d}\mu=\int _\Omega \phi\xi\,\mathrm{d}x,~\forall \phi\in C^{\infty}_0(\Omega).
$$
Since this holds for any compact set $\Omega\subset\mathbb R^2$, we have that there exists a function
$\xi\in L^1_{\textrm{loc}}(\mathbb R^2)$ such that
$$
\int _{\mathbb R^2} \phi\,\mathrm{d}\mu=\lim_{n\to\infty} \int_{\mathbb R^2} \left[|x|^{-\mu}*(Q(|x|)F(v_n))\right]Q(|x|)f(v_n)\,\mathrm{d}x= \int _{\mathbb R^2} \phi\xi\,\mathrm{d}x,~\forall \phi\in C^{\infty}_0(\mathbb R^2).
$$
Moreover, due to $v_n\to v_\star$ a.e. in $\mathbb R^2$,
 $$
 \xi= \left[|x|^{-\mu}*(Q(|x|)F(v_\star))\right]Q(|x|)f(v_\star)
 $$
 and the proof is just finished.

\begin{Claim}  
$v_\star$ is nontrivial.
\end{Claim}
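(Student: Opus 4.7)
The plan is to argue by contradiction: assume $v_\star\equiv 0$ and derive a contradiction with $c^\star>0$ (guaranteed by Lemma \ref{geometria}). The first step uses the previous claim: when $v_\star=0$, one has $\int_{\mathbb R^2}[|x|^{-\mu}\ast(QF(v_n))]QF(v_n)\,\mathrm{d}x\to 0$, which combined with $J(v_n)\to c^\star$ yields $\|v_n\|^2\to 2c^\star$. The crucial ingredient is the sharp minimax estimate \eqref{est-nivel}, which ensures $2c^\star<\frac{4-\mu}{\alpha_0}(1+\frac{2b_0}{4-\mu})\pi$, strictly below the Trudinger--Moser threshold of Proposition \ref{TM}. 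Exploiting this gap, I would choose $q>1$ close to $1$ and $\alpha>\alpha_0$ close to $\alpha_0$ such that
\[
\frac{4q\alpha}{4-\mu}\|v_n\|^2 \leq 4\pi\Bigl(1+\frac{2b_0}{4-\mu}\Bigr)-\hat\delta
\]
for some $\hat\delta>0$ and all large $n$; Proposition \ref{TM} applied to $v_n/\|v_n\|$ then supplies the uniform bound $\int_{\mathbb R^2} Q(|x|)^{\frac{4}{4-\mu}}\bigl(e^{\frac{4q\alpha}{4-\mu}v_n^2}-1\bigr)\,\mathrm{d}x\leq C$.

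Next I would show that the nonlocal term in $J'(v_n)v_n$ tends to zero. Hardy--Littlewood--Sobolev yields
\[
\Bigl|\int_{\mathbb R^2}[|x|^{-\mu}\ast(QF(v_n))]Qf(v_n)v_n\,\mathrm{d}x\Bigr|\leq C\,|QF(v_n)|_{\frac{4}{4-\mu}}\,|Qf(v_n)v_n|_{\frac{4}{4-\mu}}.
\]
Following the template used for the quantity $A_n^{1}$ in the proof of Proposition \ref{ps}, the growth control \eqref{cresc-F}, H\"older's inequality, the uniform exponential bound just obtained, and Corollary \ref{tec2} (which makes every weighted polynomial integral $\int_{\mathbb R^2} Q^{\frac{4}{4-\mu}}|v_n|^r\,\mathrm{d}x$ vanish as $n\to\infty$ for each $r\geq 2$) together force $|QF(v_n)|_{\frac{4}{4-\mu}}\to 0$. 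A verbatim adaptation of the calculation in Proposition \ref{tec5} shows that $|Qf(v_n)v_n|_{\frac{4}{4-\mu}}$ is uniformly bounded. Hence the nonlocal term in $J'(v_n)v_n$ is $o_n(1)$.

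Since $J'(v_n)v_n=o_n(1)$, the previous step gives $\|v_n\|^2\to 0$, contradicting $\|v_n\|^2\to 2c^\star>0$. This is the desired contradiction, so $v_\star\not\equiv 0$ and the proof of Theorem \ref{main-thm2} is complete. The main obstacle is the calibration in the H\"older/Moser step: $q$ and $\alpha$ must be chosen just beyond $1$ and $\alpha_0$ so that the exponential integral remains uniformly controlled by Proposition \ref{TM}, while the polynomial factors produced by the H\"older split still have exponents $\geq 2$ and therefore vanish by Corollary \ref{tec2}. Striking this balance is precisely why the sharper minimax estimate \eqref{est-nivel}, which in turn relies on the extra assumption $\liminf_{r\to 0^+}Q(r)/r^{b_0}>0$ of Theorem \ref{main-thm2}, is indispensable.
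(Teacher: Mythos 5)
Your argument is correct and follows essentially the same route as the paper's: assume $v_\star\equiv 0$, use the previous claim to get $\|v_n\|^2\to 2c^\star>0$, invoke the minimax estimate \eqref{est-nivel} to place $\|v_n\|^2$ strictly below the Trudinger--Moser threshold, and then kill the nonlocal term in $J'(v_n)v_n$ via Hardy--Littlewood--Sobolev, the growth estimates, H\"older with $q>1$, $\alpha>\alpha_0$ close to $1$, $\alpha_0$, Proposition \ref{TM} and Corollary \ref{tec2}, forcing $\|v_n\|^2\to 0$, a contradiction. The only cosmetic difference is the allocation of the two HLS factors (you send $|Q F(v_n)|_{\frac{4}{4-\mu}}\to 0$ and keep $|Q f(v_n)v_n|_{\frac{4}{4-\mu}}$ bounded, while the paper, arguing ``analogously to \eqref{tec6}'' with weak limit $0$, keeps the first factor bounded and sends the second to zero); both allocations work with the same ingredients, noting only that the uniform bound on $|Q f(v_n)v_n|_{\frac{4}{4-\mu}}$ comes from the under-threshold exponential estimate rather than from a literal reuse of Proposition \ref{tec5}, whose constant is for a fixed function.
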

\noindent\textbf{Verification:} Let us suppose by contradiction that $v_\star\equiv 0$. Then, by \eqref{af1},
$$
\lim_{n\rightarrow\infty}\int_{\mathbb{R}^2}\left[|x|^{-\mu}*(Q(|x|)F(v_n))\right]Q(|x|)F(v_n)\,\mathrm{d}x=0.
$$
Thus, since
$$
J(v_n)=\frac{1}{2}\|v_n\|^2-\frac{1}{2}\int_{\mathbb{R}^2}\left[|x|^{-\mu}*(Q(|x|)F(v_n))\right]Q(|x|)F(v_n)\,\mathrm{d}x=c^{\star}+o_n(1),
$$
we get
\begin{equation}\label{cajazeiras}
\lim_{n\rightarrow\infty}\|v_n\|^2=2c^{\star}>0.
\end{equation}
From this and \eqref{est-nivel}, there exists $\delta>0$ so that 
$$
\|v_n\|^2< \frac{4-\mu}{\alpha_0}\left(1+\frac{2b_0}{4-\mu}\right)\pi-\delta
$$
 for $n\in\mathbb{N}$ large. Hence, for $q>1$ close to $1$ and $\alpha>\alpha_0$ close to $\alpha_0$ we will still have for some $\hat{\delta}>0$ that
	$$
	\frac{4q\alpha}{4-\mu}\|v_n\|^{2}\leq \left(1+\frac{2b_0}{4-\mu}\right)4\pi-\hat{\delta},\ \ \ \forall\ n> n_0.
	$$
Thereby, we may proceed analogously to \eqref{tec6} to get
$$
\int_{\mathbb{R}^2}\left[|x|^{-\mu}*(Q(|x|)F(v_n))\right]Q(|x|)f(v_n)v_n\,\mathrm{d}x\rightarrow0,\ \textrm{as}\ n\rightarrow+\infty.
$$
Consequently, $\|v_n\|^2\rightarrow 0,$
as $n\rightarrow+\infty$, which is a contradiction with \eqref{cajazeiras}. Therefore,
$v_\star$ is a nontrivial weak solution of \eqref{P} and this completes the proof of the theorem.
\end{proof}





\bibliographystyle{elsarticle-num}

\end{document}